\documentclass[11pt]{article}
\usepackage{amsmath, amssymb, amsfonts, amsthm, latexsym} 
\usepackage{mathrsfs}
\usepackage{graphicx}
\usepackage{authblk}
\usepackage[usenames]{color}
\newtheorem{theorem}{Theorem}[section]
\newtheorem{lemma}[theorem]{Lemma}
\newtheorem{claim}[theorem]{Claim}
\newtheorem{conjecture}[theorem]{Conjecture}
\newtheorem{definition}[theorem]{Definition}
\newtheorem{proposition}[theorem]{Proposition}

\newtheorem{remark}[theorem]{Remark}
\newtheorem{observation}[theorem]{Observation}

\newtheorem*{notat*}{Notation}

\title{On graphs whose cycle space is spanned by their Hamilton cycles}

\author{Dan Hefetz \thanks{School of Computer Science, Ariel University, Ariel 40700, Israel. Email: {\tt danhe@ariel.ac.il}.}
\quad Michael Krivelevich \thanks{School of Mathematical Sciences, Tel Aviv University, Tel Aviv 6997801, Israel. Research supported in part
by NSF-BSF grant 2023688. Email: {\tt krivelev@tauex.tau.ac.il}.}}

\begin{document}
\maketitle
 
\begin{abstract}
The cycle space of a graph $G$, denoted $\mathcal{C}(G)$, is a vector space over ${\mathbb F}_2$, spanned by all incidence vectors of edge-sets of cycles of $G$. If $G$ has $n$ vertices, then $\mathcal{C}_n(G)$ denotes the subspace of $\mathcal{C}(G)$, spanned by the incidence vectors of Hamilton cycles of $G$. We consider several known sufficient conditions for Hamiltonicity and show that an appropriate and fairly mild strengthening of each such condition in fact ensures the stronger property $\mathcal{C}_n(G) = \mathcal{C}(G)$. In particular, we consider the classical Chv\'atal-Erd\H{o}s criterion and prove that (under various additional restrictions) if $n$ is odd and $\kappa(G) \geq c \alpha(G)$, where $c$ is a sufficiently large absolute constant, then $\mathcal{C}_n(G) = \mathcal{C}(G)$. Moreover, considering the McDiarmid-Yolov criterion we prove that if $n$ is odd and $\delta(G) \geq \max \left\{2 \tilde{\alpha}(G) + 9, \tilde{\alpha}(G) + 18 \right\}$, where $\tilde{\alpha}(G)$ is the so-called bipartite independence number of $G$, then $\mathcal{C}_n(G) = \mathcal{C}(G)$. We also prove that if $n$ is odd and $G$ admits $16 \alpha(G) + 12$ pairwise disjoint connected dominating sets, then $\mathcal{C}_n(G) = \mathcal{C}(G)$. Finally, we consider an effective Chv\'atal-Erd\H{o}s type criterion for bipartite graphs and prove that if $G$ is a balanced bipartite graph on $2n$ vertices, satisfying $\alpha_{\emph{BIP}}(G) \leq 2 \delta(G) - 24$, then $\mathcal{C}_{2n}(G) = \mathcal{C}(G)$.           
\end{abstract}

\section{Introduction}  
Let $G = (V,E)$ be a graph on $n$ vertices. The \emph{edge space} of $G$, denoted $\mathcal{E}(G)$, is a vector space over ${\mathbb F}_2$ consisting of all incidence vectors of subsets of $E$. The \emph{cycle space} of $G$, denoted $\mathcal{C}(G)$, is the subspace of $\mathcal{E}(G)$, spanned by all incidence vectors of cycles of $G$. For any integer $3 \leq k \leq n$, let $\mathcal{C}_k(G)$ be the subspace of $\mathcal{C}(G)$, spanned by all incidence vectors of cycles of length $k$ in $G$. Determining conditions under which $\mathcal{C}_k(G) = \mathcal{C}(G)$ holds for some $3 \leq k \leq n$ is a well-studied problem (see, e.g.,~\cite{BK, BL, DHJ, Hartman, Locke1, Locke2}). In this paper we are interested in the case $k = n$, that is, in graphs whose cycle space is spanned by their Hamilton cycles. This problem has been addressed by various researchers (see, e.g.,~\cite{ALW, HK, HKGnd, Heinig1, Heinig2, HY}). Since the symmetric difference of any two even graphs (i.e., subsets of $E$ of even size) is an even graph, it is evident that if $\mathcal{C}_n(G) = \mathcal{C}(G)$, then $G$ is bipartite or $n$ is odd. Moreover, $G$ must either be acyclic or Hamiltonian.\footnote{In fact, if $G$ is Hamiltonian, then it is  also 2-connected, implying that every edge of $G$ is in some cycle of $G$. Hence, if $G$ is Hamiltonian and $\mathcal{C}_n(G) = \mathcal{C}(G)$, then any edge of $G$ is in a Hamilton cycle of $G$. The latter property lies between being Hamiltonian and being Hamilton-connected.} Since the former case is trivial, this study can be viewed as part of the common theme of proving that (possibly somewhat strengthened) various sufficient conditions for Hamiltonicity in fact ensure stronger properties. 

One of the earliest and most well-known sufficient conditions for Hamiltonicity is Dirac's Theorem, stipulating that any graph on $n \geq 3$ vertices whose minimum degree is at least $n/2$ is Hamiltonian; $n$-vertex graphs whose minimum degree is at least $n/2$ are thus dubbed \emph{Dirac graphs}. Given an $n$-vertex graph $G$, where $n$ is odd, with minimum degree $(n+1)/2 + k$, where $k \geq 0$ is as small as possible, we would like to determine whether $\mathcal{C}_n(G) = \mathcal{C}(G)$ holds. This question was first considered by Heinig~\cite{Heinig1} who proved that $\mathcal{C}_n(G) = \mathcal{C}(G)$ holds whenever $G$ is an $n$-vertex graph with minimum degree $\delta(G) \geq (1/2 + \varepsilon) n$, for any constant $\varepsilon > 0$ and sufficiently large odd $n$. This was significantly improved by Christoph, Nenadov, and Petrova~\cite{CNP} who proved that $\delta(G) \geq n/2 + 37$ suffices. Recently, a complete resolution of this problem was announced by Hou and Yin~\cite{HY} who proved that $\delta(G) > n/2$ is sufficient (and necessary) for $\mathcal{C}_n(G) = \mathcal{C}(G)$ to hold. 

Another natural venue for this problem are random and pseudo-random graphs. Indeed, it was proved by Christoph, Nenadov, and Petrova in~\cite{CNP} that $\mathcal{C}_n(G) = \mathcal{C}(G)$ holds whenever $G$ is a pseudo-random graph with certain appropriate properties (see Theorem 1.3 in~\cite{CNP} for details). One immediate consequence of this result, significantly improving a result of Heinig~\cite{Heinig2}, is that if $G \sim \mathbb{G}(n,p)$, where $n$ is odd and $p \geq C \log n/n$ for a sufficiently large constant $C$, then asymptotically almost surely (a.a.s. for brevity hereafter) $\mathcal{C}_n(G) = \mathcal{C}(G)$. The exact bound on $p$ has been subsequently attained by the authors in~\cite{HK} where it is proved that if $p$ is large enough to ensure that $\delta(G) \geq 3$ holds a.a.s., then it also ensures that a.a.s. $\mathcal{C}_n(G) = \mathcal{C}(G)$ (the necessity of this condition was observed by Heinig in~\cite{Heinig2}). Note that the required value of $p$ is slightly larger than the one required for Hamiltonicity (recall that the latter coincides with the probability ensuring that $\delta(G) \geq 2$ holds a.a.s.).   

An additional immediate consequence of the aforementioned result of Christoph, Nenadov, and Petrova regarding pseudo-random graphs is that if $G$ is an $(n, d, \lambda)$-graph, $n$ is odd, $d \geq C \log n$, and $\lambda \leq \varepsilon d$, where $C$ and $\varepsilon$ are appropriate constants, then $\mathcal{C}_n(G) = \mathcal{C}(G)$. This was somewhat improved by the authors~\cite{HKGnd} who proved that $d \geq C \log n/ \log (d/\lambda)$ suffices to ensure the same conclusion (with the other assumptions unchanged). In the most natural special case, where $G$ is a random $d$-regular graph $G \sim \mathcal{G}_{n, d}$, it is proved in~\cite{HKGnd} that sufficiently large yet constant $d$ is a.a.s. enough to ensure that $\mathcal{C}_n(G) = \mathcal{C}(G)$ holds. Since odd $n$ mandates that $d$ is even, a complementary result, taking care, in particular, of odd values of $d$, is also proved in~\cite{HKGnd}. Namely, it is proved there that if $G \sim \mathcal{G}_{n, d}$, where $n$ is even and $d$ is a sufficiently large constant, then a.a.s. $\mathcal{C}_{n-1}(G) = \mathcal{C}(G)$.    

In light of the aforementioned results for Dirac graphs and for $\mathbb{G}(n,p)$, it is natural to consider this cycle space problem for randomly perturbed dense graphs. This was done by the authors in~\cite{HKGnd}, where it is proved that if $H$ is an $n$-vertex graph with minimum degree $\delta(H) \geq \delta n$, where $\delta > 0$ is a constant and $n$ is odd, and $G \sim \mathbb{G}(n,p)$, where $p \geq C/n$ for a sufficiently large constant $C := C(\delta)$, then a.a.s. $\mathcal{C}_n(H \cup G) = \mathcal{C}(H \cup G)$ holds.  

\subsection{Our results}

All results stated in this section show that mild strengthenings of known sufficient conditions for an $n$-vertex graph $G$ to be Hamiltonian in fact imply that $\mathcal{C}_n(G) = \mathcal{C}(G)$; throughout we assume (explicitly or implicitly) that $n$ is odd or $G$ is bipartite.

A well-known sufficient condition for Hamiltonicity, due to Chv\'atal and Erd\H{o}s~\cite{CE}, stipulates that highly connected graphs with a small independence number are Hamiltonian. Formally, their result reads as follows.
\begin{theorem} [\cite{CE}] \label{th::HamCE}
Let $G$ be a graph on at least 3 vertices. If $\kappa(G) \geq \alpha(G)$, then $G$ is Hamiltonian. 
\end{theorem}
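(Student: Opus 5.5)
We follow the classical longest-cycle argument of Chv\'atal and Erd\H{o}s. Let $k = \kappa(G) \geq \alpha(G)$. First we dispose of small cases. If $\alpha(G) = 1$, then $G$ is complete on at least $3$ vertices and hence Hamiltonian. Otherwise $k \geq \alpha(G) \geq 2$, so $G$ is $2$-connected and therefore contains a cycle. Let $C$ be a longest cycle in $G$, with a fixed cyclic orientation, and for $x \in V(C)$ let $x^+$ denote its successor on $C$. Suppose for contradiction that $C$ is not Hamiltonian, and pick a vertex $v \notin V(C)$.

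Next we claim that $|V(C)| \geq k+1$. By Dirac's classical fact, a $k$-connected graph with $k \geq 2$ has a cycle through any $k$ prescribed vertices, so $G$ has a cycle of length at least $\min\{n, 2k\} \geq k+1$. Alternatively one argues directly with fans: if $|V(C)| \leq k$, then Menger's theorem (fan version) gives $|V(C)|$ paths from $v$ to $V(C)$ that are internally disjoint and meet $C$ only at their endpoints. Two such paths ending at consecutive vertices of $C$ would lengthen $C$.

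Now we apply the fan lemma to $v$ and $C$. Since $G$ is $k$-connected and $|V(C)| \geq k$, there are $k$ paths $P_1, \dots, P_k$ from $v$ to distinct vertices $x_1, \dots, x_k \in V(C)$. These paths pairwise share only $v$ and each meets $C$ only at its endpoint. Consider the set $S = \{v, x_1^+, \dots, x_k^+\}$, which has $k+1$ elements. The key step is to show that $S$ is independent, which gives $\alpha(G) \geq k+1 > \kappa(G)$, a contradiction.

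We verify the two kinds of non-adjacency.
\begin{itemize}
\item Suppose $v x_i^+ \in E(G)$. Note that $x_i^+ \neq x_j$ for all $j$: otherwise $P_i$ and $P_j$ would join consecutive vertices of $C$, and replacing the edge $x_i x_i^+$ by the path $P_i$ followed by $P_j$ reversed would give a longer cycle. Hence $x_i^+$ lies off every $P_j$, and replacing $x_i x_i^+$ by $P_i$ followed by the edge $v x_i^+$ yields a longer cycle. This contradicts the maximality of $C$.
\item Suppose $x_i^+ x_j^+ \in E(G)$ with $i < j$. Then the standard crossing reroute gives a longer cycle. Traverse $C$ from $x_i^+$ forward to $x_j$, take $P_j$ back to $v$, take $P_i$ reversed to $x_i$, traverse $C$ backward from $x_i$ to $x_j^+$, and close with the edge $x_j^+ x_i^+$. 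This cycle uses every vertex of $C$ together with $v$, so it is strictly longer than $C$, again a contradiction.
\end{itemize}

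The main care point is this second bullet: one must check that the rerouted closed walk is a genuine cycle. This holds because the paths $P_i$ and $P_j$ are internally disjoint from $C$ and from each other, and the vertices $x_i^+$ are distinct from all $x_j$, which was shown in the first bullet.
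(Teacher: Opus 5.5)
Your proof is correct. It is the classical Chv\'atal--Erd\H{o}s longest-cycle argument: take a longest cycle $C$, a $k$-fan from an outside vertex $v$, and show that $\{v, x_1^+, \dots, x_k^+\}$ is independent. The paper gives no proof of its own and simply quotes the theorem from~\cite{CE}, whose proof is this argument.

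One small slip: the bound $\min\{n,2k\}$ comes from Dirac's circumference theorem for $2$-connected graphs with $\delta \ge k$, not from the ``cycle through any $k$ prescribed vertices'' fact. This is harmless, since the fan lemma only needs $|V(C)| \ge k$, and your direct fan argument already supplies that.
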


It was conjectured by Jackson and Ordaz~\cite{JO} that the (needed) slight strengthening $\kappa(G) > \alpha(G)$ in fact ensures that $G$ is pancyclic. Following a recent breakthrough by Dragani\'c, Munh\'a Correia, and Sudakov~\cite{DMS2}, who proved an asymptotically optimal version of this conjecture, it has been completely resolved (for large graphs) by Letzter~\cite{Letzter}. It is natural to ask whether the same (or, possibly, slightly strengthened --- in particular, $n$ must be odd) condition also implies $\mathcal{C}_n(G) = \mathcal{C}(G)$. We prove that this is indeed the case up to a multiplicative constant factor, provided $\kappa(G)$ is not too small as a function of $|V(G)|$.

\begin{theorem} \label{th::CycleSpaceCE}
There exist a constant $c$ and an integer $n_0$ such that the following holds. Let $G$ be an $n$-vertex graph, where $n \geq n_0$ is odd. If $\kappa(G) \geq c \max\{\alpha(G), \log n\}$ or $\kappa(G) \geq c \alpha(G)^2$, then $\mathcal{C}_n(G) = \mathcal{C}(G)$.
\end{theorem}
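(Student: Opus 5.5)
The plan is to reduce $\mathcal{C}_n(G)=\mathcal{C}(G)$ to a statement about short cycles, and then to realise each short cycle as a sum of Hamilton cycles. Since $G$ is $\kappa(G)$-connected with small $\alpha(G)$, every cycle of $G$ can be written, over $\mathbb{F}_2$, as a sum of triangles and short cycles. Indeed, for any cycle $C$ with a chord we may split $C$ along the chord. A long chordless cycle $C$ induces a subgraph with independence number about $|C|/2$, so $|C|\le 2\alpha(G)+1$. Hence $\mathcal{C}(G)$ is spanned by cycles of length at most $2\alpha(G)+1$. It therefore suffices to show that every cycle $C$ of length $\ell\le 2\alpha+1$ lies in $\mathcal{C}_n(G)$.

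To express a short cycle via Hamilton cycles, I would use the standard ``switching'' gadget. Suppose we find two Hamilton cycles $H_1,H_2$ that agree everywhere except on a small configuration, so that $H_1\triangle H_2$ equals a prescribed short even-length structure. For a triangle $xyz$, the aim is to show that each of the sets $\{xy,yz,zx\}\triangle$(something already in $\mathcal{C}_n$) lies in $\mathcal{C}_n$. Concretely, I would first show that every \emph{even} short cycle (for instance a $4$-cycle $C$) is $H_1\triangle H_2$, where $H_1$ and $H_2$ are Hamilton cycles containing complementary perfect matchings of $C$. To build these, we need Hamilton paths or linkages: remove the vertices of $C$, and in the remaining graph find two vertex-disjoint paths covering everything, with prescribed endpoints. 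Since $n$ is odd, odd cycles then come from a single Hamilton cycle plus even cycles, because $\mathcal{C}(G)$ modulo the even cycles is one-dimensional. So, after handling even cycles of length at most about $2\alpha+2$, it suffices to exhibit one Hamilton cycle, which exists by Theorem~\ref{th::HamCE}.

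The key tool is a robust Chv\'atal--Erd\H{o}s linkage lemma. If $\kappa(G)\ge c\alpha(G)$ and $S$ is a set of $O(\alpha)$ vertices, then for any pairs $(a_1,b_1),\dots,(a_k,b_k)$ with $k=O(\alpha)$, the graph $G-S$ contains vertex-disjoint paths $P_i$ from $a_i$ to $b_i$ covering all of $V(G)\setminus S$. I would prove this in three steps. First, take a linkage using $\kappa$-connectivity (linkedness follows from connectivity linear in $k$, by Bollob\'as--Thomason / Thomas--Wollan). Then absorb the remaining vertices by a Chv\'atal--Erd\H{o}s-style rotation argument. If some vertex cannot be absorbed, the successors on the paths of its neighbours form an independent set of size larger than $\alpha$, a contradiction, since the vertex has at least $\kappa-|S|$ neighbours. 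The extra conditions $\kappa\ge c\log n$ or $\kappa\ge c\alpha^2$ enter here. They make the linkage-plus-absorption robust: we need about $\alpha$ paths whose endpoint sets can be chosen, which costs either a factor $\alpha$ in connectivity or, alternatively, a logarithmic-depth expansion argument to route through sets.

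The main obstacle is this covering linkage lemma with many prescribed terminal pairs. The classical Chv\'atal--Erd\H{o}s argument handles one cycle, but with $k$ paths the rotation can break terminal constraints. I would try to merge the paths into one cycle via auxiliary edges between consecutive terminals, apply the Chv\'atal--Erd\H{o}s extremal argument with connectivity $\kappa-2k$, and then verify that the resulting cycle can be chosen to use all the auxiliary edges. That is the step where the constant $c$ and the extra $\log n$ or $\alpha^2$ hypotheses would be spent.
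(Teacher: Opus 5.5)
Your reduction to short cycles is fine: induced cycles have length at most $2\alpha+1$. Writing an even cycle $C=(v_1,\dots,v_{2r},v_1)$ as $H_1\triangle H_2$ is also right. You use disjoint paths joining $v_i$ to $v_{2r-i+2}$ ($2\le i\le r$) and $v_1$ to $v_{r+1}$ that together cover $V(G)$. This is exactly the Christoph--Nenadov--Petrova parity switcher, used directly rather than through the subgraph $R$ of Lemma~\ref{lem::subgraphR}.

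The genuine gap is your ``robust Chv\'atal--Erd\H{o}s linkage lemma''. It carries the whole difficulty, and you do not prove it. As you state it, under $\kappa(G)\ge c\,\alpha(G)$ alone, it is stronger than what is known. Together with your reduction it would prove Conjecture~\ref{conj::CycleSpaceCE}, which the paper leaves open. The available result, Theorem~\ref{lem::disjointPaths}, needs $\kappa\ge c\max\{\alpha,\log n,r\}$.

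Your suggested route does not close this gap. Suppose a Chv\'atal--Erd\H{o}s extremal argument could force a Hamilton cycle through all the auxiliary edges joining consecutive terminals. It still gives no control over the cyclic order and orientation in which those edges are traversed. That order is exactly what decides which terminals end up paired once the auxiliary edges are deleted. Forcing the pairing $\{v_i,v_{2r-i+2}\}$ is a genuine linkage constraint, and the independence-number argument is blind to it. You also give no concrete mechanism by which $\log n$ or $\alpha^2$ would be spent.

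In the paper the two extra hypotheses are used in quite different ways. Under $\kappa\ge c\max\{\alpha,\log n\}$ one simply invokes Theorem~\ref{lem::disjointPaths} with the $r=O(\alpha)$ pairs $(v_1,v_{r+1})$ and $(v_i,v_{2r-i+2})$. This yields (S2b) and (S3) at once, and disjointness automatically keeps the path interiors off $V(C)$. Under $\kappa\ge c\,\alpha^2$ no spanning linkage is needed:
\begin{enumerate}
\item link the pairs $(v_i,v_{2r-i+2})$ in $G-\{v_1,v_{r+1}\}$ by Theorem~\ref{th::BT};
\item shortcut along chords until each path is induced, so it has at most $2\alpha$ vertices (alternate vertices of an induced path are independent);
\item delete these $O(\alpha^2)$ vertices; the remainder still has connectivity larger than $\alpha$, so it is Hamilton-connected by Theorem~\ref{th::HamiltonConnectCE} and supplies the spanning $v_1$--$v_{r+1}$ path.
\end{enumerate}

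There is also a smaller gap on the parity side. The claim that $\mathcal{C}(G)$ modulo the even cycles is one-dimensional must be proved \emph{with length control}, because you can only realise even cycles of length $O(\alpha)$. You need that the even-size subspace of $\mathcal{C}(G)$ is spanned by even cycles of length $O(\alpha)$. One way: join two disjoint short odd cycles by two disjoint induced paths and split their sum into two even cycles; route intersecting pairs through a third short odd cycle disjoint from both. The paper does the corresponding work in dual form in Lemma~\ref{lem::CECycleLemma}, which produces an even cycle of length at most $10\alpha+8$ with an odd number of edges in $R$.
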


The condition $\kappa(G) \geq c \log n$ appearing in the statement of Theorem~\ref{th::CycleSpaceCE} is an artifact of our proof of that theorem and we do not believe it is necessary; this leads us to the following conjecture.
\begin{conjecture} \label{conj::CycleSpaceCE}
There exists a constant $c$ such that the following holds. Let $G$ be an $n$-vertex graph, where $n$ is odd. If $\kappa(G) \geq c \alpha(G)$, then $\mathcal{C}_n(G) = \mathcal{C}(G)$.
\end{conjecture}     
The following result offers a first step towards the resolution of Conjecture~\ref{conj::CycleSpaceCE}.


\begin{theorem} \label{th::CycleSpaceCELinearDegree}
Let $G$ be an $n$-vertex graph, where $n$ is a sufficiently large odd integer. If $\delta(G) \geq \delta n$ for some $\delta := \delta(n) > 0$, and $\kappa(G) \geq \max \left\{180 \alpha(G), 2000 \delta^{-2} \right\}$, then $\mathcal{C}_n(G) = \mathcal{C}(G)$.
\end{theorem}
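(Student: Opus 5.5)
The plan is to use the standard reduction for cycle-space problems: it suffices to show that every short cycle (in fact, every triangle, together with enough other generators) is a sum of Hamilton cycles. Concretely, I would use the well-known fact that if $G$ is $2$-connected then $\mathcal{C}(G)$ is spanned by the cycles of a fixed ear-type family. Simpler still: fix a spanning tree $T$ of $G$, so that $\mathcal{C}(G)$ is spanned by the fundamental cycles $C_e$, $e \notin T$. It is then enough to show that for every edge $e=xy$ and every path $P$ in $T$ between $x$ and $y$, the cycle $P+e$ lies in $\mathcal{C}_n(G)$. The practical route (as in~\cite{CNP, HKGnd}) is to show two facts. First, every triangle of $G$ lies in $\mathcal{C}_n(G)$. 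Second, the triangles of $G$ span $\mathcal{C}(G)$ modulo a parity/bipartiteness obstruction that is eliminated because $n$ is odd, so that some Hamilton cycle, which is an odd cycle, belongs to $\mathcal{C}_n(G)$.

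For the second fact I would use the linear minimum degree. Since $\delta(G)\ge \delta n$ and $\alpha(G)$ is small, every vertex neighbourhood $N(v)$ has size at least $\delta n$ and independence number at most $\alpha(G)$. Hence $N(v)$ spans many edges, and in fact $G[N(v)]$ has few components: each component contributes an independent vertex, so there are at most $\alpha(G)$ of them. I would then show that every cycle is a sum of triangles and cycles of length at most some constant. This is done by repeatedly shortcutting long cycles through vertices adjacent to both ends of a chord, using the bound $\kappa \ge 2000\delta^{-2}$. That bound lets us find, via Menger, many disjoint short connections, and a pigeonhole argument over the $\le 1/\delta$ ``dense clusters'' (sets of pairwise common neighbours) yields short cycles. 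Finally, an odd cycle together with triangles spans everything.

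For the first fact, the key lemma is an absorbing-type statement. For a triangle $xyz$ (or a short cycle), I would find two Hamilton cycles $H_1,H_2$ with $H_1 \triangle H_2 = \{xy,yz,zx\}$. I would use the standard gadget: find a Hamilton path of $G$ passing through a fixed configuration, so that rerouting locally switches between the two cycles. To produce Hamilton cycles containing prescribed short linear forests, I would strengthen Theorem~\ref{th::HamCE}. After deleting the $O(1)$ vertices of the gadget and contracting the prescribed paths, the remaining graph $G'$ still satisfies $\kappa(G') \ge \kappa(G) - O(1) \ge \alpha(G')$, using $\kappa \ge 180\alpha$ to absorb the loss. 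A Chv\'atal--Erd\H{o}s-type argument for graphs with prescribed paths (the Hamilton-connected version needs $\kappa > \alpha$) then closes the cycle through the gadget.

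The main obstacle is the first fact, realizing each triangle as $H_1 \triangle H_2$. Contracting paths may destroy the independence bound, and the gadget needs vertices with suitable adjacencies. I would first try to use $\kappa \ge 180\alpha$ to apply a Chv\'atal--Erd\H{o}s Hamilton-connectedness statement in $G$ minus a constant-size set $S$, with endpoints prescribed. Here I would use the fact that $\kappa(G-S)\ge \kappa(G)-|S| > \alpha(G-S)+1$, which suffices for Hamilton-connectedness. Building the gadget $x\!-\!y\!-\!z$ plus two common neighbours, which exist since $N(x)\cap N(y)$ is large by the degree and $\alpha$ bounds, should then give both Hamilton cycles.
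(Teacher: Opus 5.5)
\textbf{There is a genuine gap: both of your key facts fail.}

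\textbf{The first fact is impossible by parity.} For any two Hamilton cycles $H_1,H_2$ of $G$ we have $|E(H_1)\triangle E(H_2)| = 2n-2|E(H_1)\cap E(H_2)|$, which is even. So no gadget can give $H_1\triangle H_2=\{xy,yz,zx\}$. More generally, since $n$ is odd, a sum of $m$ Hamilton cycles has $m$ edges modulo $2$. Hence an odd cycle needs an odd number of Hamilton cycles, and a local switch between two Hamilton cycles only ever produces even elements of $\mathcal{C}(G)$.

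\textbf{The second fact is false.} Replace each vertex of $K_{3,3}$ by a clique of size $m$, taking one clique of size $m+1$ so that $n=6m+1$ is odd. Join two cliques completely whenever the corresponding vertices are adjacent. Then $\alpha(G)=3$, $\kappa(G)\ge 3m$ and $\delta(G)\ge 4m-1\ge n/2$, so the hypotheses hold for large $m$. Now take the linear map $\mathcal{E}(G)\to\mathcal{E}(K_{3,3})$ that sends an edge between the cliques of $i$ and $j$ to $ij$ and kills edges inside a clique. It maps $\mathcal{C}(G)$ onto $\mathcal{C}(K_{3,3})$, and it kills every triangle. Hence the triangles span a subspace of codimension at least $4$ in $\mathcal{C}(G)$, and adding one odd cycle cannot repair this.

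\textbf{What a repair would require.} You would have to realize even cycles $(v_1,\ldots,v_{2k},v_1)$ as $H_1+H_2$, and the natural way to do so is exactly a parity switcher: linking paths $P_2,\ldots,P_k$, plus a Hamilton path from $v_1$ to $v_{k+1}$ in the rest. The even cycles you can guarantee have length up to $\Theta(\alpha)$; Lemma~\ref{lem::CECycleLemma} gives $10\alpha+8$. Your degree-based shortcutting gives length $O(1/\delta)$, and $\delta$ may tend to $0$. The linking paths can have up to $2\alpha$ vertices each. So the gadget can have $\Theta(\alpha^2)$ vertices, not $O(1)$. After deleting it, $\kappa\ge 180\alpha$ no longer yields $\kappa>\alpha$, so Theorem~\ref{th::HamiltonConnectCE} is unavailable.

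\textbf{How the paper handles this obstacle.} It never needs a spanning set. Via the certificate $R$ of Lemma~\ref{lem::subgraphR}, it suffices to build one parity switcher around a single short even cycle with an odd number of $R$-edges; this uses Lemma~\ref{lem::CECycleLemma}, with the linking done by Theorem~\ref{th::BT}. Step (S3) is then done without Chv\'atal--Erd\H{o}s:
\begin{enumerate}
\item Lemma~\ref{lem::highConnectivity} splits $G\setminus V(C)$ into at most $16/\delta$ pieces, each $\delta^2 n/64$-connected.
\item A sparse skeleton of edges between pieces is reserved in advance (Claim~\ref{cl::HamWalkFewEdges}); this is where $\kappa\ge 2000\delta^{-2}$ is used.
\item Theorem~\ref{lem::disjointPaths} is applied inside each piece. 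After reducing to $\alpha=O(\log n)$ and $\delta=\Omega(1/\sqrt{\log n})$ via Theorem~\ref{th::CycleSpaceCE}, each piece's connectivity dwarfs both the $O(\alpha^2)$ deleted vertices and $\log n$.
\end{enumerate}
Your proposal has no substitute for this step.
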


Note that the constants 180 and 2000 appearing in the statement of Theorem~\ref{th::CycleSpaceCELinearDegree} are not optimal. They were chosen in order to make our calculations easy to follow.

It is not hard to see that $c = 1$ is not sufficient in Conjecture~\ref{conj::CycleSpaceCE}. That is, similarly to the case of pancyclicity, $\kappa(G) > \alpha(G)$ is necessary (though perhaps not sufficient) to ensure $\mathcal{C}_n(G) = \mathcal{C}(G)$.
\begin{proposition} \label{prop::CElowerbound}
For every integer $k \geq 2$ and sufficiently large odd $n$ there exists an $n$-vertex graph $G$ satisfying $\kappa(G) = \alpha(G) = k$ and $\mathcal{C}_n(G) \neq \mathcal{C}(G)$.
\end{proposition}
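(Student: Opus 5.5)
The plan is to build $G$ so that some edge lies on a cycle but on no Hamilton cycle. This suffices: the incidence vector of every Hamilton cycle has a $0$ in the coordinate of that edge, so their span cannot contain a cycle through it. Hence $\mathcal{C}_n(G) \neq \mathcal{C}(G)$.

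\emph{Construction.} Fix $k \geq 2$ and a large odd $n$. Let $S = \{s_1, \ldots, s_k\}$. Let $A_1, \ldots, A_k$ be pairwise disjoint cliques, each of size larger than $k$, with $\sum_i |A_i| = n - k$; this is possible for every sufficiently large $n$, in particular every large odd $n$. Join every vertex of $S$ to every vertex of $A_1 \cup \cdots \cup A_k$. Add no edges between different $A_i$'s. Inside $S$ add the single edge $s_1 s_2$.

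\emph{Parameters.} An independent set meets each clique $A_i$ in at most one vertex. A vertex of $S$ is adjacent to all clique vertices, so an independent set containing a vertex of $S$ lies entirely in $S$. Such a set has size at most $k-1$, because of the edge $s_1s_2$. An independent set avoiding $S$ has at most one vertex per clique, so size at most $k$, and this is attained. Hence $\alpha(G) = k$.

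Deleting $S$ leaves $k \geq 2$ components $A_1, \ldots, A_k$, so $\kappa(G) \leq k$. Conversely, delete any set $X$ with $|X| \leq k-1$. Some $s \in S \setminus X$ survives. Every clique keeps a vertex, since $|A_i| > k$. Each surviving part of a clique is connected, and all of them are joined through $s$. Every other surviving vertex of $S$ is adjacent to these clique vertices. So $G - X$ is connected and $\kappa(G) = k$.

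\emph{The Hamilton-cycle argument, which is the only real step.} Let $H$ be any Hamilton cycle of $G$. Removing the vertices of $S$ from $H$ leaves paths, each contained in a single clique, since there are no edges between different cliques. Every clique is visited, so there are at least $k$ such segments. Cyclically, consecutive segments are separated by at least one vertex of $S$. This gives at least $k$ gaps filled by the $k$ vertices of $S$, so every gap contains exactly one vertex of $S$. Therefore no two vertices of $S$ are consecutive on $H$, and $s_1s_2 \notin E(H)$. On the other hand, $s_1 s_2$ lies on the triangle $s_1 s_2 x$ for any $x \in A_1$. So the incidence vector of this triangle is in $\mathcal{C}(G) \setminus \mathcal{C}_n(G)$.

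I expect no serious obstacle. The only points to check are that the clique sizes can be chosen to make $n$ odd, and that $k \geq 2$, which is exactly what makes $S$ a separator.
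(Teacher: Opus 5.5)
Your proof is correct, but it rests on a different obstruction from the paper's.

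\textbf{The paper's route.} The paper takes $A \cup X$ to be a clique with $|X| = k$. It adds $2(k-1)$ vertices spanning a perfect matching $M$, completely joined to $X$. A toughness count then shows that \emph{every} Hamilton cycle contains \emph{all} of $M$. The paper finishes with a parity argument that genuinely uses the oddness of $n$. A triangle $T \subseteq G[A \cup X]$ would have to be a sum of an odd number of Hamilton cycles, since each has $n$ edges. It would also have to be a sum of an even number, by reading off a coordinate in $M$, as $E(T) \cap M = \varnothing$.

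\textbf{Your route.} You instead force an edge out of \emph{every} Hamilton cycle. So the $s_1s_2$-coordinate vanishes identically on $\mathcal{C}_n(G)$, and no parity consideration is needed.

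\textbf{Common ground.} Both constructions exploit the same rigidity. A $k$-vertex separator whose removal leaves exactly $k$ components forces every Hamilton cycle to alternate between single separator vertices and runs inside components. In fact the paper's graph has your feature too: edges inside the clique $X$ lie on no Hamilton cycle. So your coordinate argument would also finish the paper's proof.

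\textbf{What each choice buys.} In the paper's graph each vertex of $X$ is universal, hence a one-vertex CDS. So the same graph immediately gives Propositions~\ref{prop::CDSlowerbound} and~\ref{prop::Erdoslowerbound}. Your graph serves those too, but only after a small extra remark, e.g.\ that the pairs $\{s_i, a_i\}$ with distinct $a_i \in A_1$ are $k$ pairwise disjoint CDSs.
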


A \emph{connected dominating set} (CDS for brevity, hereafter) in a graph $G$ is a set $S \subseteq V(G)$ such that $G[S]$ is connected and for every vertex $v \in V(G) \setminus S$ there exists a vertex $u \in S$ such that $uv \in E(G)$. CDSs form an important concept in graph theory with many practical applications (see, e.g., the survey~\cite{YWWY} and the many references therein). It is easy to see that if $G$ admits $t$ pairwise disjoint CDSs, then it is $t$-connected. Therefore, essentially the same arguments we use to prove Theorem~\ref{th::CycleSpaceCE} can also be used to prove the following result.

\begin{theorem} \label{th::CycleSpaceCDS}
Let $G$ be an $n$-vertex graph, where $n$ is odd, which admits $16 \alpha(G) + 12$ pairwise disjoint CDSs. Then, $\mathcal{C}_n(G) = \mathcal{C}(G)$.
\end{theorem}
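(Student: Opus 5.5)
We reduce the statement to a local problem. The cycle space $\mathcal{C}(G)$ is spanned by the triangles of $G$ together with a family of cycles generating it. A convenient choice is the following: fix a spanning tree $T$ and take, for each non-tree edge $e$, the fundamental cycle $C_e$. Instead of showing directly that each $C_e$ lies in $\mathcal{C}_n(G)$, we use the standard reduction from the earlier works (e.g.~\cite{CNP, HK}), which proceeds in two steps. First, show that every triangle, or more conveniently every cycle of length $3$ or $4$ through a given configuration, is a sum of Hamilton cycles. Second, show that short cycles span $\mathcal{C}(G)$. Since $G$ has $t := 16\alpha(G)+12$ disjoint CDSs, it is $t$-connected, so $\delta(G) \ge t$. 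Together with $\alpha(G) < t/16$, this makes every vertex neighbourhood contain many edges. It follows that $\mathcal{C}_3(G) = \mathcal{C}(G)$: any cycle $v_1 \dots v_k$ with $k \ge 4$ has a chord or can be split via a common neighbour, by Ramsey-type counting on small independence number. A cleaner route is to check that every $4$-cycle and every cycle is a sum of triangles, via $\alpha$-bounded arguments. So it suffices to prove that every triangle $xyz$ belongs to $\mathcal{C}_n(G)$.

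The key gadget is the \emph{difference of two Hamilton cycles}. If $H_1$ is a Hamilton cycle containing the path $x y z$, and $H_2$ is obtained from $H_1$ by replacing $x y z$ with the edge $xz$ and inserting $y$ elsewhere between consecutive vertices $u,w$ with $uy,wy \in E$, then $H_1+H_2 = xyz + uyw$, where $xyz$ and $uyw$ are triangles. More generally, sums of Hamilton cycles differing by local switches give triangles modulo other triangles. The plan is therefore to construct, for a given triangle $xyz$, a family of Hamilton cycles whose local differences telescope to exactly $\{xy,yz,xz\}$, using odd $n$ to kill parity obstructions. Concretely, let $S_1,\dots,S_t$ be the CDSs. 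Reserve a constant number of them, say $12$, as \emph{connectors* and absorbers} used to route the local gadgets around $x,y,z$. Apply a Chv\'atal--Erd\H{o}s type argument (Theorem~\ref{th::HamCE}) to the rest: the graph minus a few reserved vertices still has connectivity at least $16\alpha - O(1) \ge \alpha$, so it is Hamiltonian with prescribed local structure. To force a prescribed path, contract the path $xyz$ (or $xz$) into a single vertex. Contraction keeps $\alpha$ from increasing much, and each CDS still dominates the contracted vertex, so connectivity stays large, and Theorem~\ref{th::HamCE} yields Hamilton cycles through the prescribed segments. This gives $H_1 \ni xyz$ and $H_2 \ni xz$, with $y$ inserted on a $u$--$w$ path inside a small reserved gadget. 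Hence $xyz + T'$ lies in $\mathcal{C}_n(G)$ for a reference triangle $T'$ living in the gadget. Comparing two triangles through a chain of such gadgets, together with an odd cycle parity argument (odd $n$ ensures some Hamilton cycle, an odd cycle, lies in the span), one finally obtains every individual triangle.

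The main obstacle is the prescribed-structure Hamiltonicity step. Chv\'atal--Erd\H{o}s must be applied in graphs where we have forced several vertex-disjoint paths, such as $xyz$, $uyw$ and linking paths. Contracting these paths and deleting reserved vertices can lower $\kappa$ and raise $\alpha$ by a constant per operation. The factor $16$ and additive $12$ are exactly the budget for this loss, and the CDS structure is what guarantees that contracted vertices remain well-connected: each disjoint CDS provides an independent route to any contracted vertex. I would first try to prove a lemma of the following form. If $G$ has $t$ disjoint CDSs and $P_1,\dots,P_m$ are disjoint paths with $m$ constant, then the contraction $G/\{P_i\}$ has $\kappa \ge t - O(m)$ and $\alpha \le \alpha(G)+m$. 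The constants would then be tuned to make the required inequality hold for each gadget.
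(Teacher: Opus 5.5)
\textbf{Gap: the reduction to triangles fails.} Your first step claims that $\mathcal{C}_3(G)=\mathcal{C}(G)$ under the hypotheses. This is false, so even placing every triangle in $\mathcal{C}_n(G)$ would not prove the theorem. Here is a counterexample. Fix $m\ge 44$, let $A=\{a_1,\dots,a_m\}$ and $B=\{b_0,b_1,\dots,b_m\}$ be cliques, and add the matching $\{a_ib_i : 1\le i\le m\}$. Then $n=2m+1$ is odd and $\alpha(G)=2$. The sets $\{a_i,b_i\}$ are $m\ge 16\alpha(G)+12$ pairwise disjoint CDSs. However $N_G(a_i)\cap N_G(b_i)=\emptyset$, so no matching edge lies in a triangle. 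Hence every element of $\mathcal{C}_3(G)$ avoids the matching, while the induced $4$-cycle $(a_1,b_1,b_2,a_2,a_1)$ belongs to $\mathcal{C}(G)$.

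The underlying point is that a small $\alpha$ only lets you shortcut cycles of length at least $2\alpha+2$ via chords. It gives no way to decompose short induced cycles, and a large minimum degree does not supply common neighbours. A direct generation argument would therefore have to put every short cycle (length up to about $2\alpha+1$, even cycles included) into $\mathcal{C}_n(G)$. Your gadget $H_1+H_2=xyz+uyw$ only produces sums of triangles, so it cannot do this.

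There is also a second problem, in the contraction step. A Hamilton cycle of $G/P$ through the contracted vertex need not enter and leave $P$ at its endpoints, so it does not lift to a Hamilton cycle of $G$ containing $P$. The reliable way to prescribe a segment is to delete its interior and use Hamilton-connectedness (Theorem~\ref{th::HamiltonConnectCE}).

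\textbf{How the paper argues.} The paper avoids generating cycles altogether. It assumes $\mathcal{C}_n(G)\neq\mathcal{C}(G)$ and takes $R$ as in Lemma~\ref{lem::subgraphR}, so one Hamilton cycle with an odd number of $R$-edges yields a contradiction. Since $\kappa(G)\ge t\ge 12\alpha+8$, Lemma~\ref{lem::CECycleLemma} gives an even cycle $C=(v_1,\dots,v_{2r},v_1)$ with an odd number of $R$-edges and $|C|\le 10\alpha+8$.

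The CDSs are then used very concretely. Each switcher path $P_i$ from $v_i$ to $v_{2r-i+2}$ is routed through its own CDS disjoint from $V(C)$; a CDS $S$ gives such a path with interior in $S$. With $W=V(P_2)\cup\dots\cup V(P_r)$, at least $t-(10\alpha+8)-(5\alpha+3)\ge\alpha+1$ of the CDSs miss $W$. So $G\setminus W$ is $(\alpha+1)$-connected and hence Hamilton-connected, which yields a Hamilton path from $v_1$ to $v_{r+1}$. Combined with the appropriate Hamilton path of the switcher, this gives a Hamilton cycle with an odd number of $R$-edges, contradicting (C2). This count, not losses from contraction, is where the constant $16\alpha+12$ comes from.
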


Note that the same example we present in the proof of Proposition~\ref{prop::CElowerbound} shows that Theorem~\ref{th::CycleSpaceCDS} is tight up to the constants $16$ and $12$. 


\begin{proposition} \label{prop::CDSlowerbound}
For every integer $k \geq 2$ and sufficiently large odd $n$, there exists an $n$-vertex graph $G$ which admits $\alpha(G)$ pairwise disjoint CDSs but satisfies $\mathcal{C}_n(G) \neq \mathcal{C}(G)$.
\end{proposition}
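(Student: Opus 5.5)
The plan is to use the standard extremal construction for the Chv\'atal--Erd\H{o}s condition: a small clique joined completely to several disjoint cliques. In this graph every single vertex of the small clique is a connected dominating set, and the edges inside the small clique can never lie on a Hamilton cycle.

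\textbf{Construction.} Fix $k \geq 2$ and an odd $n \geq 2k$. Let $A = \{a_1, \ldots, a_k\}$ and let $B_1, \ldots, B_k$ be pairwise disjoint nonempty sets, disjoint from $A$, with $\sum_{i=1}^k |B_i| = n - k$. Define $G$ on $A \cup B_1 \cup \cdots \cup B_k$ by making $A$ a clique, making each $B_i$ a clique, and joining every vertex of $A$ to every vertex of $V(G) \setminus A$. There are no edges between $B_i$ and $B_j$ for $i \neq j$.

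\textbf{Independence number and CDSs.} Every $a \in A$ is adjacent to all other vertices, so an independent set containing $a$ is just $\{a\}$. An independent set inside $\bigcup_i B_i$ contains at most one vertex from each clique $B_i$. Choosing one vertex from each $B_i$ gives an independent set of size $k$, so $\alpha(G) = k$. Each singleton $\{a_i\}$ induces a connected graph and dominates $G$, so $\{a_1\}, \ldots, \{a_k\}$ are $\alpha(G)$ pairwise disjoint CDSs. (This also shows that removing $A$ disconnects $G$ into the $k$ components $B_1, \ldots, B_k$, so $\kappa(G) = k$, consistent with Proposition~\ref{prop::CElowerbound}.)

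\textbf{No Hamilton cycle uses an edge of $G[A]$.} Let $H$ be a Hamilton cycle of $G$.
\begin{itemize}
\item Deleting $A$ from $H$ leaves a collection of vertex-disjoint paths, called segments, covering $\bigcup_i B_i$.
\item Each segment lies inside a single $B_i$, since there are no $B_i$--$B_j$ edges. Every $B_i$ is nonempty, so there are at least $k$ segments.
\item Since $H$ is a cycle, the number of segments equals the number of maximal runs of $A$-vertices along $H$. There are at most $k$ such runs, because $|A| = k$.
\item Hence there are exactly $k$ runs, each consisting of a single vertex of $A$, and no two $A$-vertices are consecutive on $H$.
\end{itemize}
Therefore no edge of $G[A]$ lies on any Hamilton cycle. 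Consequently every vector in $\mathcal{C}_n(G)$ has zero coordinate on the edge $a_1 a_2$.

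\textbf{Conclusion.} Pick any $b \in B_1$. The triangle $a_1 a_2 b$ is a cycle of $G$, so its incidence vector lies in $\mathcal{C}(G)$, and it has a $1$ in the coordinate $a_1a_2$. Hence this vector is not in $\mathcal{C}_n(G)$, and $\mathcal{C}_n(G) \neq \mathcal{C}(G)$.

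The only step with real content is the segment-counting argument. It needs every $B_i$ to be nonempty and the edges between different $B_i$ to be absent, and both hold by construction.
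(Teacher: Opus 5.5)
Your proof is correct, but it takes a different route from the paper's, both in the construction and in the obstruction it exploits.

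The paper's construction and obstruction work as follows. It builds a clique on $A \cup X$ with $|X| = k$, and joins $X$ completely to a set $I$ of $2(k-1)$ vertices spanning a perfect matching $M$. A toughness argument then shows that every Hamilton cycle contains every edge of $M$: deleting $x_iy_i$ and then $X$ leaves $k+1$ components. The paper concludes by parity. A triangle $T$ in $G[A \cup X]$ avoids $M$, so it would have to be a sum of an even number of Hamilton cycles, whereas $|T|$ odd and $n$ odd force an odd number.

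You instead use $K_k$ joined to $k$ disjoint cliques and prove the dual statement: edges inside $A$ lie on no Hamilton cycle. The single coordinate $a_1a_2$ then already separates the triangle $a_1a_2b$ from $\mathcal{C}_n(G)$.

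Both examples rest on the same tightness. Deleting a $k$-set leaves exactly $k$ components, so every Hamilton cycle alternates between single cut vertices and full traversals of the components. Your version is more elementary:
\begin{itemize}
\item it needs no toughness lemma and no parity count;
\item it never uses that $n$ is odd;
\item it directly exhibits the failure of the necessary condition from the paper's footnote, that every edge must lie on some Hamilton cycle.
\end{itemize}
The paper's route illustrates a second type of obstruction, namely edges forced into every Hamilton cycle. Your segment-counting argument also applies verbatim to the paper's graph: since $G \setminus X$ has exactly $k$ components there, edges inside $X$ lie on no Hamilton cycle.

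Finally, your graph is Hamiltonian: traverse $a_1, B_1, a_2, B_2, \ldots, a_k, B_k$ and return to $a_1$. It has $\kappa(G) = \alpha(G) = k$ with $n$ arbitrarily large. So, like the paper's single example, it also proves Propositions~\ref{prop::CElowerbound} and~\ref{prop::Erdoslowerbound}.
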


Another interesting sufficient condition for Hamiltonicity, which extends Dirac's classical criterion, was proved by McDiarmid and Yolov~\cite{MY}. In order to state their result, we need the following definition.

\begin{definition} \label{def::BipartiteIndependence}
Let $G$ be a graph. The \emph{bipartite independence number} of $G$, denoted $\tilde{\alpha}(G)$, is the smallest integer $k$ for which there exist positive integers $a$ and $b$ such that $a + b = k + 1$, and $E_G(A,B) \neq \emptyset$ whenever $A \subseteq V(G)$ and $B \subseteq V(G) \setminus A$ are sets of sizes $|A| = a$ and $|B| = b$.
\end{definition}

We may now state the aforementioned result of McDiarmid and Yolov.
\begin{theorem} [\cite{MY}] \label{th::HamMY}
Let $G$ be a graph on at least 3 vertices. If $\delta(G) \geq \tilde{\alpha}(G)$, then $G$ is Hamiltonian. 
\end{theorem}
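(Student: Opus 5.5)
The plan is to adapt the classical Chv\'atal--Erd\H{o}s longest-cycle argument. The key observation is that a violation of Definition~\ref{def::BipartiteIndependence} needs only two disjoint sets $A,B$ with $|A|=a$, $|B|=b$ and no $A$--$B$ edges. Edges inside $A$ or inside $B$ are allowed, and this slack is what lets us absorb a whole component of $G - C$ into one side.

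Let $k=\tilde{\alpha}(G)$, and fix positive integers $a,b$ with $a+b=k+1$ witnessing the definition. We use two consequences of the definition. First, no set of $k+1$ vertices spanning no $A$--$B$ edge for some split into parts of sizes $a$ and $b$ can exist. Second, if $k=1$ then $a=b=1$ and $G$ is complete, hence Hamiltonian, since $|V(G)|\ge 3$. So assume $k\ge 2$. Then $\delta(G)\ge 2$, and $G$ contains a cycle.

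\textbf{Connectivity.} If $G$ were disconnected, take two components $K_1,K_2$. Each has at least $\delta(G)+1\ge k+1>\max\{a,b\}$ vertices. Choosing $A\subseteq K_1$ of size $a$ and $B\subseteq K_2$ of size $b$ gives $E_G(A,B)=\emptyset$, a contradiction. So $G$ is connected.

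\textbf{Main step.} Let $C$ be a longest cycle, fix an orientation of it, and write $x^+$ for the successor of $x$ on $C$. Suppose $C$ is not Hamiltonian. By connectivity there is a component $H$ of $G-C$ with nonempty attachment set $X=N_C(H)=\{x_1,\dots,x_t\}$. Let $S=\{x_1^+,\dots,x_t^+\}$. The standard rerouting arguments give three facts.
\begin{itemize}
\item[(i)] $S\cap X=\emptyset$, since $x_i^+\in X$ would let us detour through $H$ and lengthen $C$.
\item[(ii)] There are no edges between $S$ and $V(H)$, for the same reason.
\item[(iii)] $S$ is independent. An edge $x_i^+x_j^+$ together with a path through $H$ from $x_i$ to $x_j$ would yield a longer cycle: traverse $x_i \to H \to x_j$, then go backwards along $C$ to $x_i^+$, use the edge to $x_j^+$, and continue forwards to $x_i$.
\end{itemize}
In particular $|S|=t$, and $H\cup S$ spans no edges except those inside $H$.

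\textbf{Counting.} Pick any $v\in V(H)$. All neighbours of $v$ lie in $V(H)\cup X$, so
\[
k\le \delta(G)\le \deg(v)\le |V(H)|-1+t .
\]
Hence $|V(H)|+|S|\ge k+1=a+b$.

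\textbf{Building the forbidden pair.} We choose $A,B\subseteq V(H)\cup S$ with $|A|=a$, $|B|=b$, placing all chosen vertices of $H$ on a single side.
\begin{itemize}
\item If $|V(H)|\le a$, let $A$ consist of all of $V(H)$ plus $a-|V(H)|$ vertices of $S$, and let $B$ be $b$ of the remaining vertices of $S$. There are enough, because $|V(H)|+|S|\ge a+b$.
\item If $|V(H)|> a$, symmetrically put $H$-vertices into $B$ when $|V(H)|\le b$. Otherwise $|V(H)|>\max\{a,b\}$ and $t\ge 1$.
\end{itemize}
In that last case we do not use $H$ on both sides. Instead let $B$ be $b$ vertices of $H$, and let $A$ consist of $\min\{t,a\}$ vertices of $S$ topped up with further $H$-vertices. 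Edges inside $H$ would then cross between $A$ and $B$, which is a problem, so this subcase needs care.

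In every other case, (ii) and (iii) give $E_G(A,B)=\emptyset$, contradicting the choice of $a,b$. Hence $C$ is Hamiltonian.

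\textbf{Main obstacle.} The hard case is $|V(H)|>\max\{a,b\}$ with few attachments. To handle it I would first try to choose $H$, or rather $C$, extremally: for example, take $C$ longest and then $H$ as small as possible. I would then use $\delta(G)\ge k\ge\max\{a,b\}$ together with Pósa-type rotations inside $H$ to find a second independent ``successor-type'' set. Alternatively, I would compare $a$ versus $b$ with $t$ directly, noting that each vertex of $H$ has at most $t$ neighbours outside $H$. Making this subcase rigorous is where I expect the real work of the proof to lie.
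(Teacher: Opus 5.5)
Your framework is sound: facts (i)--(iii), the count $|V(H)|+t\ge a+b$, and the constructions for $|V(H)|\le\max\{a,b\}$ are all correct. But the proposal is not yet a proof. The case $|V(H)|>\max\{a,b\}$ is left open, and the remedies you sketch (an extremal choice of $H$, P\'osa rotations inside $H$) are not what is needed.

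Assume without loss of generality $a\le b$. If $t\ge a$, this case is immediate: put $a$ vertices of $S$ on one side and $b$ vertices of $H$ on the other. So the genuinely open case is $|V(H)|>b$ and $t<a$. There, what you know about $V(H)\cup S$ does not yield the forbidden pair, and you have to look outside this set.

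The missing observation is that $S$ is not the only part of $C$ with no edges to $H$. Since $H$ is a component of $G-V(C)$, all neighbours of $V(H)$ outside $H$ lie in $X$. Hence every vertex of $V(C)\setminus X$ is non-adjacent to all of $H$. So take $B$ to be any $b$ vertices of $H$ and $A$ any $a$ vertices of $V(C)\setminus X$; then $E_G(A,B)=\varnothing$.

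It remains to check that $|V(C)\setminus X|\ge a$. Use Dirac's standard fact that $G$ has a cycle of length at least $\delta(G)+1$ (look at the farthest neighbour of an endpoint of a longest path). Then the longest cycle satisfies $|V(C)|\ge \delta(G)+1\ge k+1=a+b$. By your fact (i), $X\cap S=\varnothing$ with $|S|=t$, so $t\le |V(C)|/2$. Therefore $|V(C)\setminus X|\ge (a+b)/2\ge a$.

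This argument works whenever $|V(H)|\ge b$, whatever the value of $t$. So the whole proof reduces to two cases:
\begin{itemize}
\item $|V(H)|\le b$: use $V(H)\cup S$ as you did.
\item $|V(H)|\ge b$: use $V(H)$ together with $V(C)\setminus X$.
\end{itemize}
Your separate connectivity step also becomes unnecessary, since $t=0$ forces $|V(H)|\ge a+b>b$.

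The paper does not prove this theorem itself; it quotes it from McDiarmid and Yolov, so there is no in-paper argument to compare against. With the observation above added, your Chv\'atal--Erd\H{o}s-style longest-cycle approach gives a complete and short proof.
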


Note that, as alluded to above, this is a generalization of Dirac's Theorem. Indeed, If $\delta(G) \geq \lceil n/2 \rceil$, then $E_G(v, A) \neq \emptyset$ for every $v \in V(G)$ and every $A \subset V(G) \setminus \{v\}$ of size $\lfloor n/2 \rfloor$. Hence, $\tilde{\alpha}(G) \leq \lceil n/2 \rceil \leq \delta(G)$ and thus $G$ is Hamiltonian by Theorem~\ref{th::HamMY}. This theorem of McDiarmid and Yolov has been significantly strengthened by Dragani\'c, Munh\'a Correia, and Sudakov~\cite{DMS} who proved that the same condition in fact implies that $G$ is pancyclic (with $G = K_{n/2, n/2}$ being the sole exception). Here too we wish to determine whether this condition also implies $\mathcal{C}_n(G) = \mathcal{C}(G)$. We prove that this is indeed the case up to a small multiplicative constant factor.

\begin{theorem} \label{th::CycleSpaceBI}
Let $G$ be an $n$-vertex graph, where $n$ is odd. If $\delta(G) \geq \max \left\{2 \tilde{\alpha}(G) + 9, \tilde{\alpha}(G) + 18 \right\}$, then $\mathcal{C}_n(G) = \mathcal{C}(G)$.
\end{theorem}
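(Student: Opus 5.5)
We will show that every cycle of $G$ lies in $\mathcal{C}_n(G)$. Since $\mathcal{C}(G)$ is spanned by the triangles together with the cycles in any cycle basis, a standard reduction suffices: if for every cycle $C$ and every chord $xy$ of a longer cycle the relevant pieces can be expressed, then by induction on length every cycle is a sum of short cycles. So it will be enough to prove two facts, namely that every triangle lies in $\mathcal{C}_n(G)$, and that $\mathcal{C}(G)$ is spanned by triangles, or more generally by cycles of bounded length. The second fact follows from the density encoded in $\delta(G)\ge 2\tilde\alpha(G)+9$. We will show that any cycle $C$ of length $\ell\ge 4$ can be written as a sum of strictly shorter cycles. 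Take two vertices $u,w$ at distance about $\ell/2$ on $C$. Since $\delta(G)>\tilde\alpha$, the sets $N(u)$ and $N(w)$ are large, and the definition of $\tilde\alpha$ (with sizes $a,b$) gives either a short $u$--$w$ path of length at most 3, through $N(u)$ and $N(w)$, or a common neighbour. Such a path splits $C$ into two cycles that are shorter once $\ell$ exceeds a small constant. For bounded $\ell$ we handle the remaining cases directly. It remains to reduce bounded-length cycles to triangles, or to treat them in the same way as triangles.

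The main step is a \emph{switching gadget}: for a triangle $xyz$ (or a short cycle $C_0$) we want two Hamilton cycles $H_1,H_2$ with $H_1\triangle H_2=C_0$. The plan is the usual absorber-style construction used in \cite{CNP,HK}. First we find a short structure $S$ around $xyz$, for instance a path $P$ together with a vertex $z$ such that both $P$ and $P$ re-routed through $z$ via $xz,zy$ (replacing edge $xy$) are paths with the same endpoints, covering the same vertex sets up to $z$. Because $n$ is odd, the parity obstruction vanishes. Concretely, we take paths $Q_1 = a\,x\,y\,b$ and $Q_2=a\,x\,z\,y\,b$ with $z$ also appearing elsewhere: we need one path on vertex set $V(S)$ containing $xy$ and one containing $xz,zy$, with the same endpoints and with symmetric difference equal to the triangle. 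We then delete $V(S)$ minus the endpoints and show that the remaining graph $G'$ still has a Hamilton path between the endpoints. To get this we apply Theorem~\ref{th::HamMY}, or rather its Hamilton-connected variant obtained by adding an auxiliary vertex joined to both endpoints, to $G'$. Removing $O(1)$ vertices lowers $\delta$ by $O(1)$ and does not increase $\tilde\alpha$, since the constraint only gets easier on subsets. The additive slack of $9$ and $18$ is exactly what is budgeted for the deleted vertices and the auxiliary vertex; the factor $2$ is used for the Hamilton-connectedness requirement, which typically needs $\delta\ge 2\tilde\alpha$-type bounds in this framework.

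The main obstacle is building the local gadget $S$ using only the $\tilde\alpha$ condition, since $G$ may be far from dense, for example two cliques sparsely joined are excluded but near-bipartite structures are not. I would first try to find the gadget inside $N(x)\cup N(y)\cup N(z)$: each neighbourhood has size greater than $2\tilde\alpha$, so the defining property of $\tilde\alpha$ guarantees edges between any two disjoint large subsets. This lets us greedily find vertices $a\in N(x)$ and $b\in N(y)$ and a $z$-detour with prescribed adjacencies while avoiding the $O(1)$ already used vertices. A secondary difficulty is the Hamilton-connectivity statement for $G'$, which must be derived from Theorem~\ref{th::HamMY} via the auxiliary-vertex trick while checking that $\tilde\alpha$ of the augmented graph stays bounded by $\tilde\alpha(G)+1$.
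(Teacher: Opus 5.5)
There is a genuine gap at the heart of your plan: the switching gadget you describe cannot exist. Two Hamilton cycles $H_1,H_2$ each have $n$ edges, so $|H_1\triangle H_2| = 2n-2|H_1\cap H_2|$ is even. Likewise, two paths on the same vertex set with the same endpoints have the same number of edges, so their symmetric difference is even. Hence no gadget, whether $Q_1=axyb$, $Q_2=axzyb$ or anything else, can have symmetric difference equal to a triangle or to any odd cycle.

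Your remark that odd $n$ makes the parity obstruction vanish is backwards. An odd cycle must be a sum of an \emph{odd} number of Hamilton cycles. Odd $n$ is used only to make a single Hamilton cycle an odd element of $\mathcal{C}(G)$.

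A correct direct version of your approach would need three things:
\begin{enumerate}
\item $G$ is Hamiltonian.
\item The even-size elements of $\mathcal{C}(G)$ are spanned by short \emph{even} cycles. Your length reduction via the diameter-$3$ property is fine as far as it goes, but it yields short cycles of both parities. Pairing up the odd ones needs an extra argument, and reducing to triangles is unjustified.
\item Each short even cycle $C=(v_1,\ldots,v_{2k},v_1)$ is realised as $H_1\triangle H_2$ via a CNP-type switcher. This means vertex-disjoint short paths $P_i$ joining $v_i$ and $v_{2k-i+2}$; the two $v_1$--$v_{k+1}$ Hamilton paths of this structure differ exactly in $E(C)$. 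It is completed by a Hamilton $v_1$--$v_{k+1}$ path in the rest of the graph.
\end{enumerate}

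The paper avoids the second step entirely via Lemma~\ref{lem::subgraphR}. Assuming $\mathcal{C}_n(G)\neq\mathcal{C}(G)$, it needs only \emph{one} even cycle with an odd number of $R$-edges. Lemma~\ref{lem::BICycleLemma} finds one of length at most $12$ with exactly one non-$R$ edge. The factor $2$ in the hypothesis is used there, through $\delta(R)\ge\lceil\delta(G)/2\rceil\ge a+b+4$, not for Hamilton-connectedness as you suggest. The connecting paths then come from Lemma~\ref{lem::fewNeighbours}.

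The second gap is the Hamilton-connectivity step. Adding an auxiliary vertex adjacent only to the two endpoints creates a vertex of degree $2$. The McDiarmid--Yolov condition $\delta\ge\tilde\alpha$ then fails for the augmented graph as soon as $\tilde\alpha\ge 3$, so the trick does not work. The paper instead invokes the known Hamilton-connected version, Theorem~\ref{th::HamConMY}, where $\delta>\tilde\alpha$ suffices.

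Your claim that removing $O(1)$ vertices costs only $O(1)$ in $\delta$ also has to be made quantitative to match the constants $9$ and $18$. The paper bounds $\deg_G(v_1,W)$ and $\deg_G(v_{r+1},W)$ by $17$ using the minimality of $C$ (no chord $v_sv_t$ with $s,t$ of opposite parity). It bounds $\deg_G(v,W)\le 15$ for all other vertices via property (iii) of Lemma~\ref{lem::fewNeighbours}. For arbitrary short even cycles, as your approach requires, the minimality bound on the endpoints is unavailable, so the stated constants would not obviously suffice.
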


Note that both the Chv\'atal-Erd\H{o}s and the McDiarmid-Yolov sufficient conditions for Hamiltonicity (namely, Theorems~\ref{th::HamCE} and~\ref{th::HamMY}) are trivial for bipartite graphs. Indeed, if $G$ is a bipartite graph satisfying $\kappa(G) \geq \alpha(G)$ or $\delta(G) \geq \tilde{\alpha}(G)$, then $G$ is a balanced complete bipartite graph and thus obviously Hamiltonian. In an attempt to devise an effective variant of the Chv\'atal-Erd\H{o}s criterion for bipartite graphs, the following parameter was defined by Ash~\cite{Ash}.  

\begin{definition} \label{def::AlphaBIP}
Let $G = (X \cup Y, E)$ be a balanced bipartite graph, i.e. $|X| = |Y|$. An independent set $A \subseteq X \cup Y$ is said to be \emph{balanced} if $|A \cap X| = |A \cap Y|$. The balanced independence number of $G$, denoted $\alpha_{\emph{BIP}}(G)$, is the largest size of a balanced independent set in $G$. 
\end{definition}

Following results by Ash~\cite{Ash}, by Fraisse~\cite{Fraisse}, and by Favaron, Mago, and Ordaz~\cite{FMO}, the following optimal result was proved by Ordaz, Amar, and Raspaud. 



\begin{theorem} [\cite{OAR}] \label{th::HamBipOAR}
Let $G$ be a balanced bipartite graph. If $\alpha_{\emph{BIP}}(G) \leq 2 \delta(G) - 4$, then $G$ is Hamilton-biconnected.\footnote{See the definition of a Hamilton-biconnected graph in Section~\ref{sec::prelim}.}
\end{theorem}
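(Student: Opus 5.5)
This statement is a cited result of Ordaz, Amar, and Raspaud~\cite{OAR}, so the paper will use it as a black box. If I had to prove it, I would take the classical closure/extremal approach used for Chv\'atal--Erd\H{o}s type theorems, adapted to the bipartite setting. Recall that Hamilton-biconnected means that for every $x \in X$ and $y \in Y$ there is a Hamilton path from $x$ to $y$. The plan is to fix such a pair $x,y$ and suppose no Hamilton $x$--$y$ path exists. Among all $x$--$y$ paths, take a longest one, $P = (x = v_1, v_2, \ldots, v_{2m} = y)$. Since $G$ is balanced and $P$ alternates sides, $|V(P) \cap X| = |V(P) \cap Y|$. Because $P$ is not Hamiltonian, some vertex $w \notin V(P)$ exists; by balance there is also a vertex missing from $P$ on the other side.

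The key steps are as follows.

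\emph{Step 1 (connectivity).} Use the hypothesis $\alpha_{\mathrm{BIP}}(G) \leq 2\delta(G) - 4$ to show that $G$ is highly connected. Concretely, $\kappa(G) \geq \delta(G) - 1$ or a similar bound should follow, since a small separator would leave two sides large enough to host a balanced independent set of size exceeding $2\delta - 4$.

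\emph{Step 2 (fan of paths).} Take a component $H$ of $G - V(P)$. By Menger's theorem, obtain many vertex-disjoint paths from $H$ to $P$, ending at attachment vertices $u_1, \ldots, u_s$ on $P$.

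\emph{Step 3 (building a balanced independent set).} Consider the successors (or, depending on parity, the vertices two steps along $P$) of the $u_i$. Standard rerouting arguments show these vertices, together with suitable vertices of $H$, form an independent set; otherwise $P$ could be extended. The bipartite twist is that one must choose successors on the $X$ side and predecessors on the $Y$ side so that the resulting independent set is balanced.

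\emph{Step 4 (contradiction).} Show that this balanced set has size at least $2\delta(G) - 2$, which exceeds $\alpha_{\mathrm{BIP}}(G)$. To reach this size, use the degrees of the endpoints or of a vertex of $H$ to guarantee enough attachments.

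I expect the main obstacle to be the exact constant $4$, that is, optimality. Attaining $\alpha_{\mathrm{BIP}} \leq 2\delta - 4$ rather than a weaker bound requires careful case analysis when $H$ is a single vertex versus larger, and when attachments are consecutive on $P$. The delicate case is when $w$'s neighbours lie near the endpoints $x,y$, where the successor/predecessor trick loses vertices. Here I would first try a Pósa-type rotation on the endpoints to free up more independent vertices, balancing the two sides by pairing $X$-neighbours of $w$ with $Y$-neighbours of a missing vertex on the opposite side. Since the result is quoted from~\cite{OAR}, the paper presumably relies on it directly rather than reproving it.
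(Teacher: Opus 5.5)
You are right that the paper gives no proof of this statement: it is quoted from~\cite{OAR} and used purely as a black box. It serves for connectivity in the proof of Lemma~\ref{lem::BipCycleLemma}, and for Hamiltonicity and Step (S3) in the proof of Theorem~\ref{th::BipCEcycleSpace}. The proof sketch you offer, however, does not work, because Step~1 is false. The hypothesis $\alpha_{\textrm{BIP}}(G) \leq 2\delta(G) - 4$ does not force $\kappa(G) \geq \delta(G) - 1$, nor any bound growing with $\delta(G)$.

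Here is a counterexample. Take disjoint sets $X_1, X_2, Y_1, Y_2$ of size $m$ and $S_X = \{s_1, s_2\}$, $S_Y = \{t_1, t_2\}$. Let $X = X_1 \cup X_2 \cup S_X$ and $Y = Y_1 \cup Y_2 \cup S_Y$. Join all of $X_i$ to all of $Y_i$ for $i \in \{1,2\}$, all of $S_X$ to all of $Y$, and all of $S_Y$ to all of $X$. This is a balanced bipartite graph with $\delta(G) = m+2$. A nonempty balanced independent set meets both sides, hence avoids $S_X \cup S_Y$, and therefore lies inside $X_1 \cup Y_2$ or inside $X_2 \cup Y_1$. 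So $\alpha_{\textrm{BIP}}(G) = 2m = 2\delta(G) - 4$. Nevertheless $S_X \cup S_Y$ separates $X_1 \cup Y_1$ from $X_2 \cup Y_2$, so $\kappa(G) \leq 4$ for arbitrarily large $\delta(G)$. Your heuristic breaks because the separator vertices count towards the degrees on both sides. Each side therefore only needs $\delta(G) - 2$ vertices in each part, and the straddling balanced independent set then has size exactly $2\delta(G) - 4$, not more.

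Steps~2--4 depend on Step~1 and fall with it. Menger's theorem only yields $\kappa(G)$ disjoint paths from $H$ to $P$, possibly just four. A successor/predecessor set built from $s$ attachment vertices has size roughly $s$, far below the $2\delta(G) - 2$ you need. Falling back on the degree of a vertex of $H$ helps only when its neighbours lie on $P$ (e.g.\ when $|V(H)| = 1$). In a graph shaped like the example, $H$ may be a large dense block hanging off the rest of the graph by four vertices, each of its vertices having all but two of its neighbours inside $H$.

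So the Chv\'atal--Erd\H{o}s template, which turns $\kappa \geq \alpha$ into many attachment points, cannot simply be transplanted. The missing ingredient is a way to build, from degree information alone, a balanced independent set of size about $2\delta(G)$ straddling $V(H)$ and $V(G) \setminus V(H)$ (like $X_1 \cup Y_2$ above); connectivity cannot supply it. Note also that rotating at the endpoints, as you suggest for the delicate case, is not available: $x$ and $y$ are prescribed, and a P\'osa rotation changes an endpoint.
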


As alluded to above, this result is tight. Indeed, it is shown in~\cite{OAR} that there are infinitely many values of $n$ for which there exist non-Hamiltoian balanced bipartite graphs $G$ on $2n$ vertices which satisfy $\alpha_{\textrm{BIP}}(G) = 2 \delta(G) - 2$. 

Since $\alpha_{\textrm{BIP}}(G) \leq \alpha(G)$ and $\delta(G) \geq \kappa(G)$ clearly hold, Theorem~\ref{th::HamBipOAR} may be viewed as an effective Chv\'atal-Erd\H{o}s criterion for balanced bipartite graphs. 



Our next result asserts that (a minor strengthening of) the condition appearing in Theorem~\ref{th::HamBipOAR} in fact ensures that the cycle space of $G$ is spanned by its Hamilton cycles.
\begin{theorem} \label{th::BipCEcycleSpace} 
Let $G$ be a balanced bipartite graph on $2n$ vertices. If $\alpha_{\emph{BIP}}(G) \leq 2 \delta(G) - 24$, then $\mathcal{C}_{2n}(G) = \mathcal{C}(G)$.
\end{theorem}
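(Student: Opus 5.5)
We will show that every cycle of $G$ lies in $\mathcal{C}_{2n}(G)$. The standard reduction, used also for the other theorems in the paper, is as follows. Fix a Hamilton cycle $H$ of $G$, which exists by Theorem~\ref{th::HamBipOAR}. The cycle space $\mathcal{C}(G)$ is spanned by $H$ together with the fundamental cycles $C_e$, $e \in E(G)\setminus E(H)$, where $C_e$ is the unique cycle of $H \cup e$ through $e$ avoiding one fixed edge $f_0$ of $H$ (take the Hamilton path $H - f_0$ as a spanning tree). Hence it suffices to show the following for every chord $e = xy$ of $H$: the cycle formed by $e$ and one of the two $x$--$y$ arcs of $H$ lies in $\mathcal{C}_{2n}(G)$. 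Since $H \in \mathcal{C}_{2n}(G)$, either arc will do. Because $G$ is bipartite, every chord joins $X$ to $Y$, so both arcs have odd length and both resulting cycles are even. Equivalently, it suffices to write the edge set of a suitable short even cycle through each chord as a sum of Hamilton cycles, and then propagate.

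Rather than working with arcs of $H$ directly, we plan to reduce to small configurations. The standard trick: if two Hamilton cycles $H_1,H_2$ differ only in a small local switch, then $H_1 \triangle H_2$ is a short cycle (e.g.\ a $4$-cycle or $6$-cycle). So the plan is as follows.

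\textbf{Step 1.} Show that every $4$-cycle of $G$ lies in $\mathcal{C}_{2n}(G)$, or at least that enough short cycles do.

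\textbf{Step 2.} Show that these short cycles, together with one Hamilton cycle, span $\mathcal{C}(G)$.

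For Step 1, take a $4$-cycle $abcd$ and delete vertices appropriately to form a balanced bipartite graph $G'$. For instance, remove $a,b$, or contract the path $a b c$ into a single vertex. The aim is that a Hamilton path in $G - \{\ldots\}$ with prescribed endpoints extends in two ways, using $ab,cd$ versus $bc,da$, giving two Hamilton cycles whose symmetric difference is exactly $abcd$. Concretely, we look for a Hamilton $d$--$c$ path in $G - \{a,b\}$ (this needs $d \in X$ and $c \in Y$ in the correct classes, which holds). That path completes with $c\,b\,a\,d$ to one Hamilton cycle. The second cycle should come from a Hamilton $a$--$b$ path in $G - \{c,d\}$ plus $b\,c\,d\,a$; but then the symmetric difference is not a $4$-cycle, so this is not the right device. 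Instead, we use Hamilton-biconnectivity: take a Hamilton path $P$ in $G-\{a,b,c,d\}$ joining $u \in N(a)$ to $w \in N(b)$, arranged so that it closes in two ways, $u\,a\,d\,c\,b\,w$ and $u\,a\,b\,c\,d$ adjusted with parity. Since removing a balanced set of $4$ vertices lowers $\delta$ by at most $2$ and does not increase $\alpha_{\textrm{BIP}}$, the condition $\alpha_{\textrm{BIP}} \le 2\delta - 24$ leaves slack $2\delta' - 4 \geq \alpha_{\textrm{BIP}}$ even after deleting up to about $10$ vertices per side. So Theorem~\ref{th::HamBipOAR} applies to such subgraphs and yields Hamilton paths between any prescribed pair of vertices from opposite sides. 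We will design a gadget, likely a $6$-vertex configuration around the chord with two external attachment vertices, whose two completions differ by a given short cycle $C$ through an edge $e$.

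For Step 2, given a chord $xy$, we use the same Hamilton-biconnectivity to reduce: choose neighbours $x' \in N(x)$ and $y' \in N(y)$ and route. It is simpler to show directly that for every edge $e=xy$ and every cycle, the sum decomposes into cycles of length at most $6$, each of which is handled by the gadget. A cleaner alternative is to show that the graph of ``$4$/$6$-cycles in $\mathcal{C}_{2n}$'' generates: the large minimum degree and small $\alpha_{\textrm{BIP}}$ imply that any two vertices on opposite sides have many common structures, so every cycle can be triangulated into $4$- and $6$-cycles. Precisely, any two vertices $u\in X$, $v\in Y$ are joined by a path of length at most $3$, because $N(u)$ and $N(v)$ have size $\ge \delta$, and if they spanned no edge they would form a balanced independent set of size $2\delta > \alpha_{\textrm{BIP}}$. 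Chording a long cycle with such short paths splits it into cycles of length at most $6$ plus shorter cycles, and induction finishes.

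The main obstacle is Step 1: constructing, for an arbitrary $4$- or $6$-cycle, two Hamilton cycles whose symmetric difference is exactly that cycle, while keeping the deleted or contracted set small enough that Theorem~\ref{th::HamBipOAR} still applies to the remainder. The constant $24$ is there to absorb these deletions.
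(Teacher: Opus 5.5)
\textbf{There is a genuine gap: Step~1 carries the entire content of the theorem and is never carried out, and the gadget you sketch cannot work.} Suppose $H_1,H_2$ are Hamilton cycles with $E(H_1)\triangle E(H_2)=\{ab,bc,cd,da\}$. Comparing degrees at $a,b,c,d$ shows that $H_1$ and $H_2$ use complementary perfect matchings of the $4$-cycle. It follows that $H_1\cap H_2$ is a spanning linear forest consisting of an $a$--$c$ path and a vertex-disjoint $b$--$d$ path. So the vertices outside $\{a,b,c,d\}$ must be split between \emph{two} paths. A single Hamilton path of $G-\{a,b,c,d\}$ closes up in only one way: in a bipartite graph $G[\{a,b,c,d\}]$ is just the $4$-cycle, so the only completion is through $a\,d\,c\,b$.

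The missing idea is the parity switcher of Definition~\ref{def::paritySwitcher}. For an even cycle $(v_1,\dots,v_{2r},v_1)$ you need short vertex-disjoint rungs $P_i$ from $v_i$ to $v_{2r-i+2}$, plus one Hamilton $v_1$--$v_{r+1}$ path of the rest; the two completions then differ in exactly $E(C)$. The rungs must actually be built, as in Lemma~\ref{lem::fewNeighboursBip}, from a common neighbour or from a path $v_i\,x\,z\,y\,v_{2r-i+2}$ supplied by the $\alpha_{\textrm{BIP}}$ condition.

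\textbf{You also overlook a parity obstruction.} Theorem~\ref{th::HamBipOAR} gives Hamilton paths only in a \emph{balanced} graph and only between vertices on \emph{opposite} sides. After deleting the rungs, this holds for $v_1,v_{r+1}$ exactly when $r$ is odd, i.e.\ $|C|\equiv 2\pmod 4$. For a $4$-cycle, $a$ and $c$ lie on the same side and $G\setminus V(P_2)$ is unbalanced. This is why the paper's Lemma~\ref{lem::BipCycleLemma} insists on $|C|\in\{6,10\}$.

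\textbf{Your Step~2 has a related hole.} Your bound of $3$ on opposite-side distances is correct, but it only bounds same-side distances by $4$. On an $8$-cycle, opposite-side vertices are at cycle-distance $1$ or $3$. A connecting path of length $3$ between vertices at cycle-distance $3$ yields a $6$-cycle and another $8$-cycle, so the chording induction stalls at length $8$. Likewise a BFS tree only gives fundamental cycles of length at most $8$. And $8$-cycles have the same parity defect as $4$-cycles.

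\textbf{Your direct route can be repaired, but differently from what you wrote.} Lemma~\ref{lem::fewNeighboursBip} and the last paragraph of the paper's proof never use property (b). Hence every $6$- and $10$-cycle of $G$ is a sum of two Hamilton cycles. The $\alpha_{\textrm{BIP}}$ slack provides a path $Q$ of length $4$ from $a$ to $c$ internally avoiding $b,d$; then the $4$-cycle $abcd$ is the sum of the two $6$-cycles formed by $Q$ with the paths $a\,b\,c$ and $a\,d\,c$. Similarly, for an $8$-cycle $(c_0,\dots,c_7)$ it provides a path $Q'$ of length $4$ from $c_0$ to $c_2$ internally avoiding $C$. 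The $8$-cycle is then the sum of the $6$-cycle formed by $Q'$ with $c_0\,c_1\,c_2$ and the $10$-cycle formed by $Q'$ with $c_2\,c_3\cdots c_7\,c_0$.

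The paper sidesteps needing any spanning family. It argues by contradiction via Lemma~\ref{lem::subgraphR}, where a single $6$- or $10$-cycle with an odd number of edges outside $R$ suffices.
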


Instead of starting with a sufficient condition for Hamiltonicity and show that it can be slightly strengthened so as to yield stronger properties, one may simply start with a Hamiltonian graph. This was done, for example, in the context of pancyclicity. Indeed, a well-known problem of Erd\H{o}s~\cite{Erdos} asks for the smallest function $f$ such that any Hamiltonian $n$-vertex graph $G$ satisfying $n \geq f(\alpha(G))$ is pancyclic (note that, on its own, the condition $n \geq f(\alpha(G))$ does not even imply connectivity, so assuming that $G$ is Hamiltonian is needed). Following a series of results, this problem was recently solved by Dragani\'c, Munh\'a Correia, and Sudakov~\cite{DMS3} who proved that $n \geq (2 + o(1)) \alpha(G)^2$ is sufficient (a construction of Erd\H{o}s shows that this is asymptotically best possible). It is natural to ask whether the same condition (possibly somewhat strengthened) also implies $\mathcal{C}_n(G) = \mathcal{C}(G)$ (assuming of course that $n$ is odd). However, the same example we present in the proof of Propositions~\ref{prop::CElowerbound} and~\ref{prop::CDSlowerbound} shows that this is false in a strong sense.
\begin{proposition} \label{prop::Erdoslowerbound}
For every integer $k \geq 2$, function $f$, and sufficiently large odd $n$, there exists an $n$-vertex Hamitonian graph $G$ satisfying $\alpha(G) = k$, $n \geq f(k)$, and $\mathcal{C}_n(G) \neq \mathcal{C}(G)$.
\end{proposition}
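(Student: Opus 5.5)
We need a Hamiltonian graph with $\alpha(G)=k$ and $n$ as large as we like, such that Hamilton cycles do not span the cycle space. The cleanest obstruction is an edge of $G$ that lies in a cycle but in no Hamilton cycle. By the footnote in the introduction, such an edge immediately gives $\mathcal{C}_n(G) \neq \mathcal{C}(G)$. Every vector of $\mathcal{C}_n(G)$ vanishes on that edge, while some cycle through it does not.

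The construction is as follows. Since $n$ is large and odd, choose $k$ vertex-disjoint cliques $Q_1,\dots,Q_k$ with $|Q_i| \geq 3$ and total size $n$. Pick vertices $x_i, y_i \in Q_i$ with $x_i \neq y_i$. Arrange the cliques cyclically by adding the edges $y_i x_{i+1}$ for $i=1,\dots,k$, with indices taken modulo $k$. Finally add one chord, namely an edge $e$ between a vertex $z_1 \in Q_1\setminus\{x_1,y_1\}$ and a vertex $z_2 \in Q_2 \setminus\{x_2,y_2\}$.

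We then verify four properties.

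\textbf{Hamiltonicity.} Traverse each $Q_i$ by a Hamilton path from $x_i$ to $y_i$, which exists since $Q_i$ is a clique of size at least $3$. Then use the connecting edges $y_i x_{i+1}$.

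\textbf{Independence number.} The vertex set is covered by the $k$ cliques, so $\alpha(G)\le k$. Conversely, choose one vertex in each $Q_i$ avoiding $x_i, y_i, z_i$; this is possible after taking each $|Q_i|\ge 4$. These $k$ vertices are pairwise nonadjacent, so $\alpha(G)=k$.

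\textbf{$e$ lies on a cycle.} The path $z_1\to y_1 \to x_2 \to z_2$ followed by $e$ closes a cycle.

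\textbf{$e$ lies on no Hamilton cycle.} This is the main point. First consider $k\ge 3$. Removing the two edges $y_1x_2$ and $y_kx_1$ separates $Q_1$ from the rest of the graph except through $e$. Hence the only edges leaving $Q_1$ are $y_1x_2$, $y_kx_1$ and $e$. A Hamilton cycle crosses the cut $(Q_1, V\setminus Q_1)$ an even number of times, and at least twice. So if it uses $e$, it uses exactly one of $y_1x_2$ and $y_kx_1$.

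Now look at $Q_2$. Its boundary edges are $y_1x_2$, $y_2x_3$ and $e$; when $k\ge3$ these are distinct edges. The same parity argument shows that a Hamilton cycle using $e$ uses exactly one of $y_1x_2$ and $y_2x_3$.

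Next consider $R=\bigcup_{i\ge3}Q_i$. Its boundary edges are $y_2x_3$ and $y_kx_1$ only, so a Hamilton cycle must use both of them. From the $Q_2$ constraint, the cycle then avoids $y_1x_2$. From the $Q_1$ constraint, it must use $y_1x_2$. This is a contradiction.

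For $k=2$ the boundary edges coincide more, so we run the analogous parity count on the cut $(Q_1,Q_2)$ directly. That cut consists of exactly the three edges $y_1x_2$, $y_2x_1$ and $e$. A Hamilton cycle crosses it an even number of times, so it uses exactly two of these three edges. This case needs extra care: the counting alone does not rule out a Hamilton cycle through $e$. To fix it, modify the construction so that $x_1=y_1$ is a single attachment vertex, with $Q_1$ still a clique. Then $x_1$ has degree $2$ relative to its cut edges, and the cut edges at $Q_1$ number three with two of them sharing the endpoint $x_1$. A Hamilton cycle cannot use both edges at $x_1$ that leave $Q_1$, since it must also enter the rest of $Q_1$. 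So it uses $e$ together with exactly one other cut edge. We should check that this modified $k=2$ graph is still Hamiltonian; if the check fails, we fall back on the standard Proposition~\ref{prop::CElowerbound} example.

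Conclusion. Choosing the clique sizes so that $n$ is odd and $n\ge f(k)$, the edge $e$ lies on a cycle but on no Hamilton cycle. Therefore $\mathcal{C}_n(G)\ne\mathcal{C}(G)$.

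The main obstacle is the parity argument: arranging the cut structure so that $e$ provably lies on no Hamilton cycle while the graph stays Hamiltonian and keeps $\alpha(G)=k$ exactly.
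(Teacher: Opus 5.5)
\textbf{There is a genuine gap.} Your central claim, that $e$ lies on no Hamilton cycle, is false, so the argument does not go through.

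For $k \ge 3$, build the following cycle. Take a Hamilton path of $Q_1$ from $x_1$ to $z_1$, then $e$, then a Hamilton path of $Q_2$ from $z_2$ to $y_2$, then $y_2x_3$. Continue with Hamilton paths of $Q_3,\dots,Q_k$ from $x_i$ to $y_i$, linked by the edges $y_ix_{i+1}$, and close with $y_kx_1$. Since $z_1\neq x_1$, $z_2\neq y_2$ and each $Q_i$ is a clique, this is a Hamilton cycle through $e$.

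The error is in your final deduction. Suppose the cycle uses $e$ and also $y_kx_1$, which is forced by the two-edge cut around $\bigcup_{i\ge 3}Q_i$. Then the $Q_1$ constraint (exactly two of $y_1x_2$, $y_kx_1$, $e$ are used) forces $y_1x_2$ to be \emph{unused}. That agrees with the $Q_2$ constraint, so no contradiction arises.

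The case $k=2$ also fails. The same cycle, closed with $y_2x_1$, is a Hamilton cycle through $e$. In your modified $k=2$ graph, your own cut argument shows that $e$ lies on \emph{every} Hamilton cycle, which is the opposite of what you need. The fallback to the example of Proposition~\ref{prop::CElowerbound} is left unjustified.

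The missing ingredient is the parity obstruction the paper uses, and this is where the oddness of $n$ enters. Suppose some edge $g$ lies on every Hamilton cycle and $T$ is a triangle not containing $g$. Any representation of $T$ as a sum of $m$ Hamilton cycles satisfies $3 \equiv mn \equiv m \pmod 2$, so $m$ is odd. Then $g$ would appear in the sum, hence $T\in\mathcal{C}(G)\setminus\mathcal{C}_n(G)$.

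The paper applies this to the graph where $A\cup X$ is a clique, $X$ is complete to $I$, and $I$ carries a perfect matching $M$. It shows by a toughness count that every Hamilton cycle contains $M$: deleting $X$ from $G$ minus an edge of $M$ leaves $k+1$ components.

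Your chain of cliques can be rescued the same way by deleting the chord. Then every connecting edge $y_ix_{i+1}$ is forced: each $Q_i$ has exactly two boundary edges when $k\ge 3$, and the cut $(Q_1,Q_2)$ has exactly two edges when $k=2$. A triangle inside $Q_1$ then does the job. Hamiltonicity and $\alpha(G)=k$ hold as you verified, with $|Q_i|\ge 3$ sufficing.
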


The rest of this paper is organised as follows. In Section~\ref{sec::prelim} we introduce some terminology, notation, and standard tools, and present the method of Christoph, Nenadov, and Petrova from~\cite{CNP} which is a central ingredient in our proofs. In Section~\ref{sec::CEcycleSpace} we prove Theorems~\ref{th::CycleSpaceCE}, \ref{th::CycleSpaceCELinearDegree}, and~\ref{th::CycleSpaceCDS}, and present a construction that proves Propositions~\ref{prop::CElowerbound}, \ref{prop::CDSlowerbound}, and~\ref{prop::Erdoslowerbound}. In Section~\ref{sec::BIcycleSpace} we prove Theorem~\ref{th::CycleSpaceBI} and in Section~\ref{sec::bipartite} we prove Theorem~\ref{th::BipCEcycleSpace}. 


\section{Preliminaries and tools} \label{sec::prelim}

For the sake of simplicity and clarity of presentation, we do not make a particular effort to optimize some of the constants obtained in our proofs. We also omit floor and ceiling signs whenever these are not crucial. Throughout this paper, $\log$ stands for the natural logarithm, unless explicitly stated otherwise. Our graph-theoretic notation is standard; in particular, we use the following.

For a graph $G$, let $V(G)$ and $E(G)$ denote its sets of vertices and edges respectively, and let $v(G) = |V(G)|$ and $e(G) = |E(G)|$. For a set $A \subseteq V(G)$, let $E_G(A)$ denote the set of edges of $G$ with both endpoints in $A$ and let $e_G(A) = |E_G(A)|$. For disjoint sets $A, B \subseteq  V(G)$, let $E_G(A, B)$ denote the set of edges of $G$ with one endpoint in $A$ and one endpoint in $B$, and let $e_G(A, B) = |E_G(A, B)|$. For a set $S \subseteq V(G)$, let $G[S]$ denote the subgraph of $G$ induced by the set $S$, and let $G \setminus S = G[V(G) \setminus S]$. For a set $S \subseteq V(G)$, let $N_G(S) = \{v \in V(G) \setminus S : \exists u \in S \textrm{ such that } uv \in E(G)\}$ denote the \emph{external neighbourhood} of $S$ in $G$. For a vertex $u \in V(G)$ we abbreviate $N_G(\{u\})$ under $N_G(u)$ and let $\textrm{deg}_G(u) = |N_G(u)|$ denote the degree of $u$ in $G$. The maximum degree of a graph $G$ is $\Delta(G) := \max\{\textrm{deg}_G(u) : u \in V(G)\}$, and the minimum degree of a graph $G$ is $\delta(G) := \min \{\textrm{deg}_G(u) : u \in V(G)\}$. For a vertex $u \in V(G)$ and a set $S \subseteq V(G)$, let $\textrm{deg}_G(u, S) = |N_G(u) \cap S|$. Given any two (not necessarily distinct) vertices $x, y \in V(G)$, the \emph{distance} between $x$ and $y$ in $G$, denoted $\textrm{dist}_G(x,y)$, is the length of a shortest path between $x$ and $y$ in $G$, where the length of a path is the number of its edges (for the sake of formality, we define $\textrm{dist}_G(x,y)$ to be $\infty$ whenever $x$ and $y$ lie in different connected components of $G$). The \emph{diameter} of $G$, denoted $\textrm{diam}(G)$, is $\max \{\textrm{dist}_G(x,y) : x, y \in V(G)\}$. A graph $G$ is said to be \emph{Hamilton-connected} if for every two vertices $x, y \in V(G)$ there is a Hamilton path of $G$ whose endpoints are $x$ and $y$. A bipartite graph $G = (X \cup Y, E)$ is said to be \emph{Hamilton-biconnected} if for every two vertices $x \in X$ and $y \in Y$ there is a Hamilton path of $G$ whose endpoints are $x$ and $y$. 


\subsection{The recipe of Christoph, Nenadov, and Petrova} \label{sec::recipe}

Let $n$ be an odd integer, and let $G$ be an $n$-vertex Hamiltonian graph. A recipe for proving $\mathcal{C}_n(G) = \mathcal{C}(G)$ is presented in~\cite{CNP}; it has since been utilized in~\cite{HK, HKGnd, HY}. In order to describe it we need some definitions and results. 

\begin{lemma} [\cite{CNP}] \label{lem::subgraphR}
Let $G$ be an $n$-vertex Hamiltonian graph, where $n$ is odd or $G$ is bipartite, and suppose that $\mathcal{C}_n(G) \neq \mathcal{C}(G)$. Then, there
exists a subgraph $R$ of $G$ such that the following conditions hold.
\begin{enumerate}
\item [\emph{(C1)}] $R \neq G$;

\item [\emph{(C2)}] Every Hamilton cycle in $G$ contains an even number of edges from $R$;

\item [\emph{(C3)}] For every partition $V(G) = A \cup B$ it holds that $e_R(A, B) \geq e_G(A, B)/2$ and $R \neq G[A, B]$.
\end{enumerate}
\end{lemma}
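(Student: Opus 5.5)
The plan is to argue by linear algebra over ${\mathbb F}_2$ inside the edge space $\mathcal{E}(G)$, with the standard inner product $\langle x, y \rangle = \sum_{e} x_e y_e$. I will identify a subgraph $R$ with the incidence vector of $E(R)$, viewing it as a spanning subgraph with edge set $E(R)$. The key fact I will use is the classical orthogonality between cycle space and cut space. The orthogonal complement $\mathcal{C}(G)^{\perp}$ is exactly the \emph{cut space} $\mathcal{B}(G)$, spanned by the edge sets $E_G(A,B)$ over all partitions $V(G) = A \cup B$. Moreover, the set of all such cuts is already a subspace, since $E_G(A,B) \triangle E_G(A',B') = E_G(A \triangle A', B \triangle A')$.

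First, since $\mathcal{C}_n(G) \subsetneq \mathcal{C}(G)$, taking orthogonal complements in the finite-dimensional space $\mathcal{E}(G)$ gives $\mathcal{C}_n(G)^{\perp} \supsetneq \mathcal{C}(G)^{\perp} = \mathcal{B}(G)$. Hence there is a vector $R_0 \in \mathcal{C}_n(G)^{\perp} \setminus \mathcal{B}(G)$. Being orthogonal to $\mathcal{C}_n(G)$ means exactly that every Hamilton cycle of $G$ shares an even number of edges with $R_0$.

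Next, every Hamilton cycle, like every cycle, meets every cut in an even number of edges. Therefore the entire coset $R_0 + \mathcal{B}(G)$ lies in $\mathcal{C}_n(G)^{\perp} \setminus \mathcal{B}(G)$. Among the elements of this coset, I choose $R$ with the minimum number of edges. Property (C2) then holds automatically.

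For (C3), fix a partition $V(G) = A \cup B$. The vector $R \triangle E_G(A,B)$ lies in the same coset. Its size is
\[
|E(R)| - e_R(A,B) + \bigl(e_G(A,B) - e_R(A,B)\bigr).
\]
By minimality this is at least $|E(R)|$, which gives $e_R(A,B) \geq e_G(A,B)/2$. Also $R \neq G[A,B]$, because $E(G[A,B]) = E_G(A,B)$ is a cut while $R \notin \mathcal{B}(G)$. The trivial partition $A = \emptyset$ is covered as well, so $R$ is nonempty.

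For (C1), suppose $R = G$. If $n$ is odd, then a Hamilton cycle exists because $G$ is Hamiltonian. It contains $n$ edges of $R$, an odd number, which contradicts (C2). If $G$ is bipartite with parts $X$ and $Y$, then $E(G) = E_G(X,Y)$ is a cut, which contradicts $R \notin \mathcal{B}(G)$.

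There is no real obstacle in this argument. The only points needing care are the identification $\mathcal{C}(G)^{\perp} = \mathcal{B}(G)$, which is standard, and the choice of $R$ of minimum size in the coset, which makes the inequality in (C3) immediate.
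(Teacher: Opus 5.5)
Your overall strategy is the standard one: it is the argument of \cite{CNP} that the paper cites, and your treatment of (C1) in the two cases is exactly what Remark~\ref{rem::biparite1} describes. However, the extremal choice goes the wrong way. With $R$ of \emph{minimum} size in the coset $R_0+\mathcal{B}(G)$, minimality gives $|E(R)\triangle E_G(A,B)| = |E(R)| + e_G(A,B) - 2e_R(A,B) \geq |E(R)|$. This yields $e_R(A,B) \leq e_G(A,B)/2$, the opposite of what (C3) requires. This is not a mere slip in wording, because a minimum representative really does violate (C3) in general. For instance, if $G$ has a vertex $v$ of odd degree, taking $A=\{v\}$ forces $\deg_R(v) < \deg_G(v)/2$. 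The graphs constructed for Proposition~\ref{prop::CElowerbound} with $k$ even have such vertices.

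The fix is to take $R$ of \emph{maximum} size in the coset. Then $|E(R)\triangle E_G(A,B)| \leq |E(R)|$ gives $e_G(A,B) \leq 2e_R(A,B)$, and the rest of your argument goes through unchanged:
\begin{itemize}
\item (C2) holds for every element of the coset.
\item $R \neq G[A,B]$ because $R\notin\mathcal{B}(G)$.
\item (C1) follows from (C2) when $n$ is odd, and from $R\neq G[X,Y]=G$ when $G$ is bipartite.
\end{itemize}
The ``large'' direction is also what the paper needs downstream: (C3) is used to conclude that $R$ is $\kappa(G)/2$-edge-connected and that $\delta(R)\geq \delta(G)/2$. The minimum choice cannot provide this.
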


\begin{remark} \label{rem::biparite1}
In fact, the version of this lemma appearing in~\cite{CNP} is only for odd $n$, whereas if $G$ is bipartite and Hamiltonian it has to be balanced, rendering $n$ even. However, it is not hard to see that its proof in~\cite{CNP} applies essentially verbatim to the bipartite case as well. Indeed, the only place where the assumption that $n$ is odd is used in~\cite{CNP} is in the proof of Property \emph{(C1)}; if $n$ is odd, then it is an immediate corollary of Property \emph{(C2)}. On the other hand, if $G$ (and thus also $R$) is bipartite, then \emph{(C1)} is an immediate corollary of Property \emph{(C3)} from the same lemma. 
\end{remark}

The following definition of a so-called \emph{parity switcher} is central to the method of~\cite{CNP}. It describes a construction that, given graphs $G$ and $R$ as in Lemma~\ref{lem::subgraphR}, aids one in finding a Hamilton cycle of $G$ with an odd number of edges in $R$, thus arriving at a contradiction to (C2) above. 

\begin{definition} \label{def::paritySwitcher}
Given a graph $G$ and a subgraph $R \subseteq G$, a subgraph $W \subseteq G$ is called an $R$-\emph{parity-switcher} if it consists of an even cycle $C = (v_1, v_2, \ldots, v_{2k}, v_1)$ with an odd number of edges in $R$, and vertex-disjoint paths $P_2, \ldots, P_k$ such that for every $2 \leq i \leq k$, the endpoints of $P_i$ are $v_i$ and $v_{2k-i+2}$ and $V(P_i) \cap V(C) = \{v_i, v_{2k-i+2}\}$.
\end{definition}

In justification of its title, the parity switcher, as per Definition~\ref{def::paritySwitcher}, contains two Hamilton paths whose endpoints are $v_1$ and $v_{k+1}$; one having an odd number of edges of $R$ and the other having an even number of edges of $R$ (additional details and a figure can be found in~\cite{CNP}).

We may now specify the recipe from~\cite{CNP}; it consists of the following five steps.
\begin{enumerate}

\item [(S1)] Let $G$ be an $n$-vertex Hamiltonian graph, where $n$ is odd or $G$ is bipartite. Suppose it satisfies $\mathcal{C}_n(G) \neq \mathcal{C}(G)$, and let $R \subseteq G$ be a subgraph as in Lemma~\ref{lem::subgraphR}.

\item [(S2)] Find in $G$ a (small) $R$-parity-switcher $W$, that is,
\begin{enumerate}
\item [(S2a)] Find an even (short) cycle $C = (v_1, \ldots, v_{2k}, v_1)$ with an odd number of edges in $R$.

\item [(S2b)] Find pairwise vertex-disjoint (short) paths $P_2, \ldots, P_k$ such that, for every $2 \leq i \leq k$, the endpoints of $P_i$ are $v_i$ and $v_{2k-i+2}$, and $V(P_i) \cap V(C) = \{v_i, v_{2k-i+2}\}$.
\end{enumerate}

\item [(S3)] Find in $(G \setminus V(W)) \cup \{v_1, v_{k+1}\}$ a Hamilton path $P$ whose endpoints are $v_1$ and $v_{k+1}$.

\item [(S4)] If $P$ contains an odd (even) number of edges of $R$, then choose a Hamilton path $P'$ in $W$ whose endpoints are $v_1$ and $v_{k+1}$  with an even (odd) number of edges of $R$.

\item [(S5)] Conclude that the concatenation of $P$ and $P'$ yields a Hamilton cycle $H \subseteq G$ with an odd number of edges in $R$, contradicting (C2).

\end{enumerate}

Note that there is nothing to prove in steps (S4) and (S5). Moreover, whenever we start with a graph which we know to be Hamiltonian, Step (S1) becomes immediate. The main task is thus to deal with steps (S2) and (S3). 

\begin{remark} \label{rem::biparite}
Again, the method of Christoph, Nenadov, and Petrova~\cite{CNP} was only designed for odd $n$ (see the description of Step \emph{(S1)}). However, as noted in Remark~\ref{rem::biparite1}, it applies to the bipartite case as well (the only change is Lemma~\ref{lem::subgraphR}). 
\end{remark}


\section{Highly connected graphs with a small independence number} \label{sec::CEcycleSpace}
In this section we prove Theorems~\ref{th::CycleSpaceCE}, \ref{th::CycleSpaceCELinearDegree}, and~\ref{th::CycleSpaceCDS}. We begin by recalling several known results that facilitate our proofs.

The first such result, due to Chv\'atal and Erd\H{o}s, asserts that a slight strengthening of the condition appearing in Theorem~\ref{th::HamCE} strengthens its conclusion.
\begin{theorem} [\cite{CE}] \label{th::HamiltonConnectCE}
If $G$ is a graph satisfying $\kappa(G) > \alpha(G)$, then $G$ is Hamilton-connected.
\end{theorem}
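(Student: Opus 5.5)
We argue by extremal choice of a path, exactly as in the classical Chvátal–Erd\H{o}s argument for Hamiltonicity, but with the endpoints fixed. Fix distinct vertices $x,y \in V(G)$. Since $\kappa(G) > \alpha(G) \geq 1$, the graph $G$ is connected, so there is an $x$–$y$ path. Let $P$ be a longest $x$–$y$ path in $G$, and orient it from $x$ to $y$. For $u \in V(P) \setminus \{y\}$ we write $u^+$ for the successor of $u$ on $P$. Suppose for a contradiction that $P$ is not Hamiltonian. Pick a vertex $w \notin V(P)$, and let $H$ be the connected component of $G \setminus V(P)$ containing $w$. Let $U = N_G(V(H)) \subseteq V(P)$ be its set of attachment vertices.

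First we show that $U$ is a vertex cut of size at least $\kappa(G)$. If $U = V(P)$, then both $x$ and $x^+$ have neighbours in $H$. Replacing the edge $xx^+$ by a path through $H$ from a neighbour of $x$ to a neighbour of $x^+$ gives a longer $x$–$y$ path, a contradiction. Hence some vertex of $P$ lies outside $U$. Then $U$ separates $V(H)$ from that vertex, so $|U| \geq \kappa(G) \geq \alpha(G) + 1$. Let $U' = U \setminus \{y\}$; then $|U'| \geq \alpha(G)$, and every $u \in U'$ has a successor $u^+$. Consider the set $S = \{w\} \cup \{u^+ : u \in U'\}$, which has $|U'| + 1 \geq \alpha(G) + 1$ distinct elements. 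We shall show that $S$ is independent, which contradicts the definition of $\alpha(G)$.

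Two facts about successors are needed first.
\begin{itemize}
\item \emph{No successor lies in $H$ or has a neighbour in $H$.} Suppose $u \in U'$ and $u^+$ has a neighbour in $H$. Replace the edge $uu^+$ by a detour $u \to H \to u^+$, taking a path inside the connected graph $H$ between a neighbour of $u$ and a neighbour of $u^+$. This yields a longer $x$–$y$ path. In particular $u^+ \notin U$, and $u^+$ is not adjacent to $w$.
\item \emph{Successors are pairwise non-adjacent.} Suppose $u_i, u_j \in U'$ with $u_i$ before $u_j$ on $P$, and $u_i^+u_j^+ \in E(G)$. Consider the walk that follows $P$ from $x$ to $u_i$, then goes through $H$ to $u_j$, then runs backwards along $P$ from $u_j$ to $u_i^+$, then takes the edge $u_i^+u_j^+$, and finally follows $P$ from $u_j^+$ to $y$. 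This is an $x$–$y$ path. It uses every vertex of $P$ plus at least one vertex of $H$, so it is longer than $P$.
\end{itemize}

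The first fact shows $w$ is adjacent to no other element of $S$, and the second shows the successors are mutually non-adjacent, so $S$ is independent. This contradiction shows that $P$ is a Hamilton path, and since $x,y$ were arbitrary, $G$ is Hamilton-connected.

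The main point requiring care is the counting. Fixing both endpoints costs exactly one attachment vertex, namely $y$, which has no successor. This loss is absorbed precisely by the strict inequality $\kappa(G) > \alpha(G)$, as opposed to $\kappa(G) \geq \alpha(G)$ in Theorem~\ref{th::HamCE}. The degenerate case $U = V(P)$ is the one that must be handled separately, as above, to guarantee that $U$ is genuinely a separator.
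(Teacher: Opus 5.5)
Your proof is correct, and it is the classical longest-$x$--$y$-path argument of Chv\'atal and Erd\H{o}s: the strict inequality $\kappa(G) > \alpha(G)$ pays for the attachment vertex $y$, which has no successor. The paper does not prove this statement itself and simply cites their result, so there is nothing further to compare; the one cosmetic point is that your separate treatment of the case $U = V(P)$ is redundant, since your first successor fact applied to $x \in U'$ already rules it out.
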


Improving earlier results by Robertson and Seymour~\cite{RS} and by Bollob\'as and Thomason~\cite{BT}, it was proved by Thomas and Wollan~\cite{TW} that highly connected graphs are also highly \emph{linked}.

\begin{theorem} [\cite{TW}] \label{th::BT}
Let $G$ be a graph and let $x_1, y_1, \ldots, x_r, y_r$ be $2r$ distinct vertices of $G$. If $\kappa(G) \geq 10 r$, then $G$ admits pairwise vertex-disjoint paths $P_1, \ldots, P_r$ such that, for every $i \in [r]$, the endpoints of $P_i$ are $x_i$ and $y_i$.
\end{theorem}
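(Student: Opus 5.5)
Since Theorem~\ref{th::BT} is quoted from~\cite{TW}, we only sketch the route we would take. The plan is to prove the stronger density statement of Thomas and Wollan: \emph{if $G$ is $2r$-connected and $e(G) \geq 5r\,v(G)$, then $G$ is $r$-linked.} This stronger statement implies the theorem, because $\kappa(G) \geq 10r$ gives both $2r$-connectivity and $\delta(G) \geq 10r$, and hence $e(G) \geq 5r\,v(G)$.

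\textbf{Setting up a minimal counterexample.} Plain $2r$-connectivity is not preserved under contraction and deletion, so we replace it by a rooted notion. Let $X = \{x_1, y_1, \ldots, x_r, y_r\}$ be the set of terminals. We call $(G,X)$ \emph{admissible} if there is no separation $(A,B)$ of $G$ of order less than $|X|$ with $X \subseteq A$ and $B \setminus A \neq \emptyset$. We then take a counterexample $(G,X)$ with $e(G) \geq 5r\,v(G) - c_r$, for a suitable slack constant $c_r$ depending only on $r$, that is minimal with respect to $v(G)+e(G)$.

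\textbf{Extracting structure from minimality.}
\begin{enumerate}
\item A small separation $(A,B)$ lets us replace $G$ by $G[A]$ with the separator made a clique. This new graph is smaller and still satisfies the density and admissibility assumptions, a contradiction. Hence all separations are of order at least $2r$, apart from trivial ones.
\item Deleting an edge not inside $X$ keeps the graph a counterexample. Therefore minimality forces $e(G)$ to lie within about $5r$ of the density threshold. Consequently some vertex $v \notin X$ has $\deg(v) \leq 10r$.
\item Contracting an edge $uv$ destroys exactly $1 + |N(u) \cap N(v)|$ edges and one vertex. Minimality therefore forces every edge to lie in at least $5r - 1$ triangles.
\end{enumerate}
From the last two points, $H := G[N(v)]$ has at most $10r$ vertices and minimum degree at least $5r - 1$, that is, at least about half its order.

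\textbf{Finding the linkage.} By admissibility and Menger's theorem, we find $2r$ vertex-disjoint paths from $X$ to $N(v)$. We then want to link their endpoints in the right pairs inside $H \cup \{v\}$. A graph whose minimum degree is roughly half its order is highly connected, and indeed close to Hamilton-connected. Together with the apex $v$, this should let us realise the required pairing through disjoint short paths, using for example the fact that two vertices of $H$ have many common neighbours. We also need to re-route so that the Menger paths avoid the interior of $H$ except at their endpoints; this is done by choosing them to minimise the number of vertices they use in $N(v)$.

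\textbf{Main obstacle.} We expect the hard part to be the final linking step inside $N(v)$. The minimum degree there is only about $|H|/2$, so simple greedy routing can fail. Moreover, the Menger paths may enter $H$ at unfavourable vertices. Our first attempt would be a careful case analysis on $H$: either $H$ is $r$-linked thanks to its near-half minimum degree and the common-neighbour counts, or $H$ contains a sparse cut. In the latter case we aim to lift the sparse cut to a small separation of $G$ or to a cheaper contraction, contradicting minimality. Tuning the constants so that both cases close exactly at $10r$ is where we anticipate the real work.
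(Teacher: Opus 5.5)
The paper gives no proof of this statement. It imports it from Thomas and Wollan~\cite{TW}, where it is the corollary of their density theorem: a $2k$-connected graph with at least $5k\,v(G)$ edges is $k$-linked. Your opening reduction is correct: $\kappa(G)\ge 10r$ gives $2r$-connectivity and $\delta(G)\ge 10r$, hence $e(G)\ge 5r\,v(G)$. Had you stopped there and cited that theorem, you would be doing what the paper does.

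You instead set out to \emph{prove} the density theorem, and the sketch does not do so. The decisive step is missing. After minimality you have $v\notin X$ with $|N(v)|\le 10r$, $H=G[N(v)]$ with $\delta(H)\approx 5r$, and $2r$ disjoint $X$--$N(v)$ paths ending at $a_1,b_1,\dots,a_r,b_r$. These facts alone do not give the linkage. For example, $H$ may be two cliques of order about $5r$ sharing only a bounded number of vertices. This is compatible with $|H|\le 10r$, with $\delta(H)\ge 5r-1$, and with every edge at $v$ or inside $H$ lying in at least $5r-1$ triangles. Put all $a_i$ in one clique and all $b_i$ in the other. Then $G[N(v)\cup\{v\}]$ has only $O(1)$ vertex-disjoint paths between the two sides, fewer than $r$ once $r$ is moderately large. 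Your proposed escape does not work as stated. A sparse cut of $H$ is not a separation of $G$: its two sides may be joined by many disjoint paths through $V(G)\setminus(N(v)\cup\{v\})$, so no small separation appears. You also name no contraction that preserves both density and admissibility. Finishing requires routing through the rest of $G$ while controlling how this interacts with the Menger paths from $X$. That is where the substance of~\cite{TW} lies, and your proposal leaves it open.

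The preparatory reductions also need repair.
\begin{itemize}
\item Replacing $G$ by $G[A]$ with $A\cap B$ made a clique is not justified. A linkage there may use several of the new edges, and realising them simultaneously by disjoint paths through $B\setminus A$ is itself a linkage problem in $G[B]$.
\item The density inequality need not pass to that side at all, since most edges may lie in $G[B]$.
\item Deleting or contracting an edge can turn a separation of order exactly $|X|$, with $X$ on one side, into one of order less than $|X|$. The smaller graph then leaves the admissible class, and minimality says nothing about it.
\end{itemize}
So your claims that $e(G)$ sits near the threshold and that every edge lies in at least $5r-1$ triangles hold only after separations of order $|X|$ have been eliminated. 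They also need $c_r$ chosen so that all these reductions are mutually compatible. Your step~1 as written does neither.
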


A variant of Theorem~\ref{th::BT} in which the paths span the vertex-set of $G$ was proved by Aigner-Horev and the authors.

\begin{theorem} [\cite{AHK}] \label{lem::disjointPaths}
Let $G = (V,E)$ be an $n$-vertex graph and let $x_1, y_1, \ldots, x_r, y_r$ be $2r$ distinct vertices of $G$. There exists a constant $c > 0$ such that if $\kappa(G) \geq c \max \{\alpha(G), \log n, r\}$, then $G$ admits pairwise vertex-disjoint paths $P_1, \ldots, P_r$ such that $V(P_1) \cup \ldots \cup V(P_r) = V$ and, for every $i \in [r]$, the endpoints of $P_i$ are $x_i$ and $y_i$.
\end{theorem}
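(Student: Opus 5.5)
The plan is to run the Chv\'atal--Erd\H{o}s extension argument (the one behind Theorems~\ref{th::HamCE} and~\ref{th::HamiltonConnectCE}) on a linkage rather than on a single cycle. First, since $\kappa(G) \geq c r \geq 10r$ once $c \geq 10$, Theorem~\ref{th::BT} gives pairwise vertex-disjoint paths $P_1, \ldots, P_r$, each $P_i$ joining $x_i$ to $y_i$. Among all such linkages I would fix one that maximises $|V(P_1) \cup \ldots \cup V(P_r)|$. The goal is then to show that this maximal linkage is spanning. Fix an orientation of each $P_i$ from $x_i$ to $y_i$, and for a vertex $u \neq y_i$ on $P_i$ write $u^+$ for its successor.

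Suppose, for contradiction, that some vertex $v$ lies outside $U := V(P_1) \cup \ldots \cup V(P_r)$. First assume $|U| \geq \kappa(G)$. By Menger's theorem there is a fan of $\kappa(G)$ paths from $v$ to $U$ that are disjoint apart from $v$ and whose interiors avoid $U$; let $u_1, \ldots, u_\kappa$ be their endpoints in $U$. Discard those $u_j$ that equal some $y_i$; at most $r$ of them are removed, leaving at least $\kappa - r > \alpha(G)$ endpoints. Now consider the set $S = \{v\} \cup \{u_j^+\}$ over the remaining $j$. It has more than $\alpha(G)$ elements, so it is not independent.
\begin{itemize}
\item If $v u_j^+ \in E(G)$, reroute $P_i$ through $u_j \to \text{(fan path)} \to v \to u_j^+$. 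This enlarges $U$ and keeps all endpoints, contradicting maximality.
\item If $u_j^+ u_\ell^+ \in E(G)$ with $u_j, u_\ell$ on the same path $P_i$, the classical Chv\'atal--Erd\H{o}s crossing move (use the fans to $u_j$ and $u_\ell$, traverse the segment between them in reverse, then use the edge $u_j^+ u_\ell^+$) again gives a longer $x_i$--$y_i$ path that absorbs $v$.
\end{itemize}
If instead $|U| < \kappa(G)$, the fan reaches every vertex of $U$, and the same argument can be applied with the endpoints taken to be all of $U$.

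The main obstacle is the case where $u_j$ and $u_\ell$ lie on different paths $P_i \neq P_m$. An edge $u_j^+ u_\ell^+$ then merges segments of two paths, and the naive reconnection pairs $x_i$ with $y_m$, so the prescribed pairing is destroyed. To control this, I would strengthen the pigeonhole step. There are at most $r$ paths and more than $c\max\{\alpha, r\}$ endpoints, so some single path $P_i$ receives at least $\kappa/r \geq c\,\alpha/r \cdot r/r$ endpoints. This is not quite enough when $r \gg 1$, so I would instead use a finer independent-set argument: the set of successors on each path induces something with few independent vertices. Concretely, I would take $\alpha(G)+1$ endpoints on one path whenever some path receives that many. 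Otherwise every path receives at most $\alpha$ endpoints, and then I would pass to predecessor/successor pairs on two paths and use a switching of two crossing edges, which restores the pairing; this needs roughly $\kappa \geq 2\alpha + 2r$. I expect this different-paths case to require the most care.

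Finally, the $\log n$ term should enter only to guarantee the initial setup, and it can be dispensed with in the spirit of the hypothesis, since $\kappa \geq c\log n$ is simply assumed. If the crossing argument forces the paths $P_i$ to be short or ``well spread'' (for instance, so that removing interiors of other paths does not kill the fans), I would first choose the linkage of Theorem~\ref{th::BT} with each path of length $O(\log n)$. This uses that high connectivity together with small $\alpha$ gives balls that grow geometrically, hence diameter $O(\log n)$. In that case $r \cdot O(\log n)$ vertices are removed, which is absorbed by the constant $c$ only after a careful accounting, and that is where I would expect the $\log n$ to be genuinely used.
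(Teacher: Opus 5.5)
The paper does not prove this statement; it imports it from \cite{AHK}. Judged on its own, your proposal has a genuine gap, and it sits exactly where you suspected.

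\textbf{What works.} The same-path absorption is the standard Chv\'atal--Erd\H{o}s move and is fine. You should also dispose of the case where some $u_j^+$ is itself a fan endpoint, which is a direct insertion of $v$. The case $|U|<\kappa(G)$ is likewise trivial, but not for the reason you give: your counting can fail there, whereas the fan in fact reaches two consecutive vertices of some $P_i$.

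\textbf{The gap: the cross-path case.} Suppose the edge supplied by the independence number joins $u_j^+\in P_i$ to $u_\ell^+\in P_m$ with $i\neq m$. Your proposal does not handle this case.
\begin{itemize}
\item Pigeonholing $\alpha(G)+1$ fan endpoints onto a single path needs $\kappa(G)$ of order $r\,\alpha(G)$. That is a product, not the maximum in the hypothesis.
\item The ``switching'' you sketch does not exist in the form suggested. Exchanging along the detour $u_j\to v\to u_\ell$ and one edge $u_j^{\pm}u_\ell^{\pm}$ (any of the four sign choices) always yields either an $x_i$--$x_m$ and a $y_i$--$y_m$ path, or an $x_i$--$y_m$ and an $x_m$--$y_i$ path.
\item If the component of $v$ in $G-U$ is just $\{v\}$, it can be entered only once. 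Undoing the transposition then requires further edges in a prescribed relative position. Examples are two ``parallel'' edges $ab$ and $a^+b^+$ between the two new paths, or an edge $b\,u_j^+$ together with a bypass chord $u_\ell^-b^+$ of $P_m$.
\item A bound on $\alpha(G)$ only guarantees a single edge inside every set of $\alpha(G)+1$ vertices. It does not produce coordinated pairs of adjacencies like these, and nothing in your sketch does either.
\end{itemize}

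\textbf{Why this is not a routine detail.} Suppose your scheme worked under a hypothesis linear in $\alpha(G)$ and $r$ without $\log n$ (you suggest $\kappa(G)\ge 2\alpha(G)+2r$, plus $10r$ for Theorem~\ref{th::BT}). Plug it into the proof of Theorem~\ref{th::CycleSpaceCE}, where Lemma~\ref{lem::CECycleLemma} gives $2r\le 10\alpha(G)+8$. This would prove Conjecture~\ref{conj::CycleSpaceCE}, which the authors leave open and attribute to the $\log n$ coming from this very linkage theorem.

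\textbf{Your final paragraph does not close the gap.} Short connecting paths come from $\alpha(G)$, not from geometric ball growth: a shortest path is induced, so it has length less than $2\alpha(G)$. Moreover, deleting $r$ paths of length $\Theta(\log n)$ or $\Theta(\alpha(G))$ costs about $r\log n$ or $r\,\alpha(G)$ in connectivity. Again this is a product, which $c\max\{\alpha(G),\log n,r\}$ does not cover.

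\textbf{What you actually prove.} As written, your argument gives only a product-type statement: a spanning linkage whenever $\kappa(G)\ge\max\{10r,\ r(\alpha(G)+1)+1\}$. This is of the same nature as the $\kappa(G)\ge c\,\alpha(G)^2$ alternative in Theorem~\ref{th::CycleSpaceCE}. The cross-path case needs a genuinely new idea.
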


The following result allows one to partition a graph into large highly connected subgraphs.

\begin{lemma} [\cite{BFKM}] \label{lem::highConnectivity}
Let $H = (V,E)$ be an $n$-vertex graph with minimum degree $k > 0$. Then, there exists a partition $V = V_1 \cup \ldots \cup V_t$ such that, for every $i \in [t]$, the subgraph $H[V_i]$ is $k^2/(16 n)$-connected and $|V_i| \geq k/8$. 
\end{lemma}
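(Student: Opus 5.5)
We would prove this by an extremal choice of partition, using a potential that rewards having many parts and penalises edges between parts. Set $\ell = k^2/(16n)$. For a partition $\mathcal{P} = \{V_1, \ldots, V_t\}$ of $V$, let $\mathrm{cr}(\mathcal{P})$ be the number of edges of $H$ joining different parts. Define
$$\Phi(\mathcal{P}) = \mathrm{cr}(\mathcal{P}) - \lambda t,$$
where $\lambda$ is of order $k^2$; we would try $\lambda$ somewhere in $[k^2/32, k^2/16]$ and fix it once the two estimates below are in place. Among all partitions of $V$ we choose one minimising $\Phi$, breaking ties by taking $t$ as large as possible. We then check the two required properties separately, in each case showing that a violation yields a modification that strictly decreases $\Phi$.

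\emph{Connectivity.} Suppose some $H[V_i]$ is not $\ell$-connected. Then either $|V_i| \leq \ell$, or there is a separator $S \subseteq V_i$ with $|S| < \ell$ and $V_i \setminus S = A \cup B$, where $A, B \neq \emptyset$ and $e_H(A, B) = 0$. In the second case we split $V_i$ into $A \cup S$ and $B$. The only new crossing edges are those between $S$ and $B$, and there are at most $|S| \cdot |V_i| < \ell n = k^2/16$ of them. Meanwhile $t$ increases by one, so $\Phi$ changes by less than $k^2/16 - \lambda$. With $\lambda = k^2/16$ this is strictly negative, contradicting minimality. If the constant does not quite work out, we would instead put each vertex of $S$ on the side containing more of its $V_i$-neighbours, which halves the bound. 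The degenerate case $|V_i| \leq \ell$ is absorbed by the size argument below, since $\ell \leq k/16 < k/8$.

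\emph{Sizes.} Suppose some part $V_i$ satisfies $|V_i| < k/8$. Every vertex of $V_i$ has at least $k - |V_i| > 7k/8$ neighbours outside $V_i$, so $e_H(V_i, V \setminus V_i) > 7k|V_i|/8$. We dissolve $V_i$ by moving each of its vertices $v$ into the part $V_j$ ($j \neq i$) containing the most neighbours of $v$. This lowers $t$ by one, costing $\lambda$ in $\Phi$. It also removes at least the crossing edges from $v$ into its chosen part, although edges among the moved vertices may become new crossing edges (at most $\binom{|V_i|}{2}$ of them).

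\emph{Main obstacle.} The dissolving step is the hardest part: the gain must beat $\lambda \approx k^2/16$ even when $V_i$ is tiny (e.g., a single vertex), while its outside neighbours are spread over many parts. To handle this we would first use minimality to bound the number of parts: every part has internal minimum degree roughly at least $k/2$, since a vertex with most of its neighbours in another part could be moved to decrease $\mathrm{cr}$. This gives $|V_j| \gtrsim k/2$ for each part and hence $t \lesssim 2n/k$. If this still does not give enough, we would modify the potential, replacing $\lambda t$ by a sum $\sum_j g(|V_j|)$ with $g$ concave, so that small parts are penalised more strongly. With such a $g$, merging a small part into its best neighbouring part is profitable, while the splitting argument above, which only ever creates parts of size at least $\ell$, still goes through. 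Balancing $g$ against the constants $1/8$ and $1/16$ is where we expect the real work to lie.
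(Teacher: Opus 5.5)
Your connectivity step is fine, but the size step is not just unfinished: it is false for the potential you chose, and tuning constants will not repair it. Write $\mathcal{P}_0$ for the partition into singletons. For every partition $\mathcal{P}=\{V_1,\ldots,V_t\}$ we have
\[
\Phi(\mathcal{P})-\Phi(\mathcal{P}_0)=\sum_{j=1}^{t}\Bigl(\lambda(|V_j|-1)-e_H(V_j)\Bigr)\ \geq\ \sum_{j=1}^{t}(|V_j|-1)\Bigl(\lambda-\frac{|V_j|}{2}\Bigr).
\]
So whenever $\lambda>n/2$, the unique minimiser of $\Phi$ is $\mathcal{P}_0$, whose parts have size $1<k/8$.

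Your splitting step needs $\lambda$ of order $\ell n$, i.e.\ $\lambda\geq k^2/32$ even after the halving trick. But $k^2/32>n/2$ as soon as $k>4\sqrt{n}$. This covers the regime $\delta(H)=\Omega(n)$ in which the paper applies the lemma; already $H=K_n$ with $n$ large is a counterexample.

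Nor can you lower $\lambda$. Let $H$ be two cliques of order about $n/2$ joined through a set $S$ of $\lceil\ell\rceil-1$ vertices adjacent to everything. Every partition as in the lemma must separate the two cliques, so it has at least about $|S|n/2\approx k^2/32$ crossing edges. It therefore beats the (non-$\ell$-connected) partition $\{V\}$ only if $\lambda$ is of order $k^2$. Hence no potential of the form $\mathrm{cr}(\mathcal{P})-\lambda t$ works.

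The auxiliary claims do not help either:
\begin{itemize}
\item Single-vertex moves only give $\deg(v,V_i)\geq\max_j\deg(v,V_j)\geq\deg(v)/t$, not anything close to $k/2$.
\item A concave $g$ with $g(0)=0$ is subadditive, so $-\sum_j g(|V_j|)$ rewards splitting even more and favours small parts.
\item Nothing ensures that the side $B$ produced by a split has at least $\ell$ vertices.
\end{itemize}

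The missing idea is to delete separators rather than price them. Run your splitting procedure, but at each step put $S$ into a reservoir $X$ and split $U\setminus S$ into two non-empty sides with no edges between them. Then no edge ever joins two parts, so every vertex of a part has at least $k-|X|$ neighbours inside its own part.

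As long as $|X|<k/2$, every part therefore has more than $k/2>\ell$ vertices. So there are fewer than $2n/k$ parts, hence fewer than $2n/k$ splits, and so $|X|<(2n/k)\ell=k/8$. One split adds fewer than $\ell\leq k/16$ vertices to $X$, so this bound is never violated. The procedure terminates with every part $\ell$-connected and of size more than $7k/8$.

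Finally, each $x\in X$ has more than $7k/8$ neighbours outside $X$, spread over fewer than $2n/k$ parts, so some part contains more than $7\ell$ of them. Adding a vertex with at least $\ell$ neighbours to an $\ell$-connected graph keeps it $\ell$-connected, so $X$ can be absorbed one vertex at a time.

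The paper itself does not prove this lemma; it quotes it from \cite{BFKM}.
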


The following simple observation will be useful in the proof of Theorem~\ref{th::CycleSpaceCELinearDegree}.
\begin{observation} \label{obs::ConnCollapsedGraph}
Let $G$ be a $k$-edge-connected graph and let $V_1 \cup \ldots \cup V_t$ be a partition of $V(G)$ into non-empty parts. Let $H$ be the multigraph obtained from $G$ by collapsing each set $V_i$ into a single vertex, that is,  $V(H) = \{v_1, \ldots, v_t\}$ and for every $1 \leq i < j \leq t$ there are $e_G(V_i, V_j)$ edges of $H$ between $v_i$ and $v_j$. Then, $H$ is $k$-edge-connected. 
\end{observation}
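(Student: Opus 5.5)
The plan is to show that every edge cut of $H$ is an edge cut of $G$ of the same size. Since $G$ is $k$-edge-connected, every such cut has at least $k$ edges, and hence so does every cut of $H$.

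First I fix a proper nonempty subset $S \subsetneq V(H)$, say $S = \{v_i : i \in I\}$ with $\emptyset \neq I \subsetneq [t]$. I then set $A = \bigcup_{i \in I} V_i$ and $B = V(G) \setminus A = \bigcup_{j \notin I} V_j$. Because every part $V_i$ is non-empty, both $A$ and $B$ are non-empty, so $(A,B)$ is a genuine partition of $V(G)$ into two non-empty sets.

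Next I compare the two cuts. By the definition of $H$, the number of edges of $H$ between $S$ and $V(H) \setminus S$ equals $\sum_{i \in I, j \notin I} e_G(V_i, V_j)$. Since the parts are pairwise disjoint, this sum is exactly $e_G(A,B)$. The edges of $G$ inside a single part $V_i$ are deleted in the collapse, but such edges never cross the cut anyway, so nothing is lost. Since $G$ is $k$-edge-connected, $e_G(A,B) \geq k$, and therefore every cut of $H$ has at least $k$ edges.

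Finally, by the cut characterisation of edge-connectivity (or equivalently, by Menger's theorem for multigraphs), $H$ is $k$-edge-connected. The one degenerate case is $t = 1$, where $H$ is a single vertex; there the statement holds vacuously under the usual convention, or one may simply assume $t \geq 2$. I do not expect any real obstacle here: the only points needing care are that the parts are non-empty, so that $A$ and $B$ are both non-empty, and that loops created inside the parts are discarded.
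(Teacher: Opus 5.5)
Your proof is correct, but it takes a different route from the paper's. You argue through cuts. Every cut $(S, V(H)\setminus S)$ of $H$ with $S=\{v_i : i\in I\}$ pulls back to the cut $(A,B)$ of $G$ with $A=\bigcup_{i\in I}V_i$, and the two cuts contain exactly the same number of edges, so every cut of $H$ has at least $k$ edges.

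The paper argues through paths. It fixes $x\in V_i$ and $y\in V_j$ and takes $k$ pairwise edge-disjoint $x$--$y$ paths in $G$. It observes that these collapse to pairwise edge-disjoint $v_i$--$v_j$ walks in $H$, extracts a path from each walk, and concludes $k$-edge-connectivity of $H$ from the path characterisation. This implicitly uses Menger's theorem in $G$ to get the paths and again in $H$ to conclude.

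Your version is shorter. If edge-connectivity is defined by deleting fewer than $k$ edges, or equivalently by cuts, it needs no Menger at all. It also shows directly that the edge-connectivity of $H$ is the minimum of $e_G(A,B)$ over cuts of $G$ whose sides are unions of parts, so collapsing can only discard cuts, never shrink them. The path argument has to check, at least tacitly, that the intra-part edges vanishing in the collapse do not disconnect the walks. In your argument those edges never cross a cut, so nothing needs checking. Both arguments treat $t=1$ vacuously.
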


\begin{proof}
Fix any $1 \leq i < j \leq t$, and let $x \in V_i$ and $y \in V_j$ be arbitrary vertices. Since $G$ is $k$-edge-connected, it admits $k$ pairwise edge-disjoint paths between $x$ and $y$. Upon collapsing $G$ to yield $H$, each such path becomes a walk between $v_i$ and $v_j$ and these walks are pairwise edge-disjoint. Each such walk contains a path between $v_i$ and $v_j$; clearly these paths are also pairwise edge-disjoint. Since $i$ and $j$ were arbitrary, it follows that $H$ admits $k$ pairwise edge-disjoint paths between $v_i$ and $v_j$ for every $1 \leq i < j \leq t$, and is thus $k$-edge-connected.     
\end{proof}

The following result is the main technical tool of this section; it handles Step (S2a).

\begin{lemma} \label{lem::CECycleLemma}
Let $G$ be a graph satisfying $\kappa(G) \geq 12 \alpha(G) + 8$. Let $R$ be a subgraph of $G$ as in Lemma~\ref{lem::subgraphR}. Then, there exists a cycle $C \subseteq G$ for which the following two  properties hold.
\begin{enumerate}
\item [\emph{(a)}] $|C|$ is even and $|E(C) \setminus E(R)|$ is odd;

\item [\emph{(b)}] $|C| \leq 10 \alpha(G) + 8$.
\end{enumerate}
\end{lemma}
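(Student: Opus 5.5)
The plan is to recast the required property in linear-algebraic terms, then pass from a linear-algebra statement to an actual short cycle. Property (a) says that $|C|$ is even and $|E(C)\cap E(R)|$ is odd, since $|E(C)\cap E(R)| = |C| - |E(C)\setminus E(R)|$. Define two $\mathbb{F}_2$-linear functionals on $\mathcal{C}(G)$: $f_1(x)=|x| \bmod 2$ and $f_2(x)=|x\cap E(R)| \bmod 2$. We want a short cycle $C$ with $(f_1(C),f_2(C))=(0,1)$.

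First I show that $f_2\notin\{0,f_1\}$ on $\mathcal{C}(G)$. Recall that a set $F\subseteq E(G)$ meets every cycle in an even number of edges if and only if $F=E_G(A,B)$ for some partition $V(G)=A\cup B$.
\begin{itemize}
\item If $f_2\equiv 0$, then $E(R)$ is such a set, so $R=G[A,B]$ for some partition. This contradicts (C3).
\item If $f_2\equiv f_1$, then $E(G)\setminus E(R)$ meets every cycle evenly, so it equals some cut $E_G(A,B)$. Then $e_R(A,B)=0$, and (C3) forces $e_G(A,B)=0$. Since $G$ is connected, the partition is trivial, which gives $R=G$ and contradicts (C1).
\end{itemize}

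Next I get short generators. Since $G[\{v_0,v_2,v_4,\ldots\}]$ is independent along any shortest path $v_0v_1\cdots v_d$, we have $\operatorname{diam}(G)\le 2\alpha(G)-1$. Fix a BFS tree $T$. The fundamental cycles of $T$ span $\mathcal{C}(G)$, and each has length at most $2\operatorname{diam}(G)+1\le 4\alpha(G)-1$. If some fundamental cycle has type $(0,1)$, we are done.

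Otherwise every generator has type in $\{(0,0),(1,0),(1,1)\}$. If none had type $(1,0)$, then $f_2=f_1$ on all generators and hence on $\mathcal{C}(G)$. If none had type $(1,1)$, then $f_2\equiv 0$. Both are excluded, so there exist odd cycles $C_1$ of type $(1,1)$ and $C_2$ of type $(1,0)$, each of length at most $4\alpha-1$. One could shorten these by taking shortest such cycles, which are induced and hence have length at most $2\alpha+1$, but this is not essential.

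The main step, and the one where $\kappa(G)\ge 12\alpha(G)+8$ is used, is to join $C_1$ and $C_2$ by two vertex-disjoint short paths. Assume first that $C_1$ and $C_2$ are vertex-disjoint. I take a shortest path $P$ from $V(C_1)$ to $V(C_2)$, with endpoints $p_1\in C_1$ and $p_2\in C_2$; it has length at most $2\alpha-1$ and is internally disjoint from both cycles. Then I delete $V(P)$ and consider $G'=G\setminus V(P)$. Since $|V(P)|\le 2\alpha<\kappa(G)$, the graph $G'$ is connected and $\alpha(G')\le\alpha(G)$. So a shortest path $Q$ in $G'$ from $V(C_1)\setminus\{p_1\}$ to $V(C_2)\setminus\{p_2\}$ also has length at most $2\alpha-1$. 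Truncating $Q$ at its first and last cycle vertices makes it internally disjoint from both cycles; call its endpoints $q_1\in C_1$ and $q_2\in C_2$.

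Now $p_1$ and $q_1$ split $C_1$ into two arcs $a,a'$, and $p_2$ and $q_2$ split $C_2$ into two arcs $b,b'$. Each choice of one arc from each cycle, together with $P$ and $Q$, gives a cycle.
\begin{itemize}
\item Since $a\cup a'=C_1$ has type $(1,1)$, swapping $a$ for $a'$ changes $(f_1,f_2)$ by $(1,1)$.
\item Since $b\cup b'=C_2$ has type $(1,0)$, swapping $b$ for $b'$ changes $(f_1,f_2)$ by $(1,0)$.
\end{itemize}
The four cycles therefore realise all four types, so one of them has type $(0,1)$. Its length is at most $|C_1|+|C_2|+|P|+|Q|$, which I will bound by $10\alpha+8$. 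If the shortest-cycle refinement is needed, this is $2(2\alpha+1)+2(2\alpha-1)=8\alpha$, well within the bound.

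The expected obstacle is the case where $C_1$ and $C_2$ intersect, together with degenerate configurations such as $p_1=q_1$ (excluded by construction) or $Q$ returning to a cycle early. If $C_1$ and $C_2$ share vertices, I would take a maximal subpath of $C_2$ whose interior avoids $C_1$ (an "ear") with both ends on $C_1$; that is, I use the theta graph formed by $C_1$ and the ear. Its three cycles are $C_1$ and the two cycles obtained by joining the ear to each of the two arcs of $C_1$. Either one of these has type $(0,1)$, or I replace $C_2$ by one of them (of type $(1,0)$ and still short) and iterate with fewer shared vertices. If $C_2$ lies entirely along $C_1$ with no ear, the two cycles coincide, which is impossible since their types differ. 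The slack in $12\alpha+8$ versus $10\alpha+8$ is intended to absorb the extra deletions made in this case analysis while keeping $\kappa>|V(P)|+\dots$.
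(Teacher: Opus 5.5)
\textbf{There is a genuine gap, in the case where $C_1$ and $C_2$ intersect.} The rest of your route is sound and differs from the paper's. The cut-space argument excluding $f_2\in\{0,f_1\}$, the bound $\textrm{diam}(G)\le 2\alpha-1$, the BFS fundamental cycles, and the vertex-disjoint case all work. Once completed, your argument uses only (C1), (C3) and $\kappa(G)>2\alpha(G)$. The paper instead needs (C2) and Chv\'atal--Erd\H{o}s Hamilton-connectedness in its Case (2.2), and gets (b) from a separate minimal-cycle chord argument.

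Two small corrections first. The shortest-cycle refinement is essential, not optional: with only $|C_i|\le 4\alpha-1$ you get roughly $12\alpha-6$, which exceeds $10\alpha+8$. Also, a shortest $(1,1)$- or $(1,0)$-cycle need not be induced, because a chord may split it into a $(1,0)$- and a $(0,1)$-cycle; that is harmless, since you are then done.

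The paper never meets the intersecting case, because in Case (2.1) it takes $C_2$ inside $V(G)\setminus V(C_1)$. In your argument, take an ear $E$ of $C_2$ with distinct ends $x,y$ on $C_1$, and let $a,a'$ be the arcs of $C_1$ between $x$ and $y$. The types of $a\cup E$ and $a'\cup E$ sum to the type $(1,1)$ of $C_1$. So they are either $\{(1,0),(0,1)\}$ or $\{(0,0),(1,1)\}$. Your dichotomy ``type $(0,1)$, or a replacement for $C_2$ of type $(1,0)$'' is therefore false. In the second case no $(1,0)$-cycle appears, so the iteration has nothing to act on, and a single ear can genuinely be useless. In addition, if $|V(C_1)\cap V(C_2)|=1$ there is no ear with two distinct ends at all, so that case is not covered.

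Both holes can be closed.
\begin{itemize}
\item \emph{One common vertex $v$.} Run your disjoint-case argument with $P$ the trivial path on $v$. Take $Q$ to be a shortest path in $G-v$ between $V(C_1)\setminus\{v\}$ and $V(C_2)\setminus\{v\}$.
\item \emph{At least two common vertices.} Cut $C_2$ at its vertices in $V(C_1)$. This splits $E(C_2)$ into $S=E(C_1)\cap E(C_2)$ and paths $E_1,\ldots,E_m$ with distinct ends on $C_1$ and interiors avoiding $V(C_1)$. For each $j$, fix an arc $a_j$ of $C_1$ between the ends of $E_j$. Then $C_2+\sum_j(E_j\cup a_j)=S+\sum_j a_j$ lies in $\mathcal{C}(G)$ and is supported on $E(C_1)$. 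Hence it equals $\varnothing$ or $E(C_1)$, whose types lie in the subspace $\{(0,0),(1,1)\}$. Since $C_2$ has type $(1,0)$, some $E_j\cup a_j$ has type outside this subspace. Then $E_j\cup a_j$ or $E_j\cup a'_j$ has type $(0,1)$ and length at most $|C_1|+|C_2|\le 4\alpha+2$.
\end{itemize}
The missing idea is to use all ears simultaneously rather than iterate on one.
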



\begin{proof}
Let $k = \kappa(G)$ and let $\alpha = \alpha(G)$. It follows by Property (C3) from Lemma~\ref{lem::subgraphR} that $R$ is $k/2$-edge-connected and, in particular, that $\delta(R) \geq k/2$. We begin by proving that there exists a cycle of some length satisfying Property (a); we distinguish between the following two cases.
\begin{enumerate}
\item [(1)] $R$ is bipartite. Let $A \cup B$ be the bipartition of $R$. Since $R \neq G[A, B]$ holds by Property (C3) from Lemma~\ref{lem::subgraphR}, there exist vertices $x \in A$ and $y \in B$ such that $xy \in E(G) \setminus E(R)$. Since $R$ is connected, there is a path $P$ in $R$ between $x$ and $y$. Since $R$ is bipartite, the length of $P$ is odd. Hence, combined with the edge $xy$, this yields a cycle which satisfies Property (a). 

\item [(2)] $R$ is not bipartite. Let $C_1 = (x_1, \ldots, x_{2t-1}, x_1)$ be an odd cycle in $R$ of minimum possible length and let $U = V(G) \setminus V(C_1)$. Note that we may assume that $|V(C_1)| \leq 2 \alpha + 1$. Indeed, assume that $|V(C_1)| \geq 2 \alpha + 3$ and let $S = \{x_{2i} : 1 \leq i \leq \alpha + 1\}$. Since $|S| \geq \alpha + 1$, there exist $1 \leq i < j \leq \alpha + 1$ such that $x_{2i} x_{2j} \in E(G)$. By the assumed minimality of $C_1$, it must hold that $x_{2i} x_{2j} \in E(G) \setminus E(R)$. In this case, one of the two cycles in $C_1 \cup \{x_{2i} x_{2j}\}$ which contain the edge $x_{2i} x_{2j}$ satisfies Property (a). This case is further divided into the following two subcases.
\begin{enumerate}
\item [(2.1)] $(G \setminus R)[U]$ is not bipartite. Let $C_2 = (y_1, \ldots, y_{2s-1}, y_1)$ be an odd cycle in $(G \setminus R)[U]$. Since $G$ is 2-connected, it follows by Menger's Theorem (see, e.g., Exercise 4.2.28 in~\cite{West}) that there are distinct vertices $u, u' \in V(C_1)$ and $v, v' \in V(C_2)$ for which there are vertex-disjoint paths $P_1$ and $P_2$ such that the endpoints of $P_1$ are $u$ and $v$, the endpoints of $P_2$ are $u'$ and $v'$, $V(C_1) \cap V(P_1) = \{u\}$, $V(C_1) \cap V(P_2) = \{u'\}$, $V(C_2) \cap V(P_1) = \{v\}$, and $V(C_2) \cap V(P_2) = \{v'\}$. By choosing one of the two paths that connect $u$ and $u'$ along $C_1$, which we denote by $P$, one can ensure that $|E(R) \cap (E(P) \cup E(P_1) \cup E(P_2))|$ is odd. Similarly, by choosing one of the two paths that connect $v$ and $v'$ along $C_2$, which we denote by $P'$, one can ensure that $|E(G \setminus R) \cap (E(P) \cup E(P_1) \cup E(P_2) \cup E(P'))|$ is odd. In particular, the cycle $P \cup P_1 \cup P' \cup P_2$ is even and thus satisfies Property (a). 

\item [(2.2)] $(G \setminus R)[U]$ is bipartite. Let $U = A \cup B$ be some bipartition of $(G \setminus R)[U]$. Let $P_1 = (u_1, \ldots, u_{4\alpha+1})$ be an arbitrary path in $R[U]$; such a path exists since $\delta(R[U]) \geq \delta(R) - |V(C_1)| \geq k/2 - (2 \alpha + 1) \geq 4 \alpha + 1$. Let $G' = G[(V(G) \setminus V(P_1)) \cup \{u_1, u_{4\alpha+1}\}]$ and note that $\kappa(G') \geq \kappa(G) - (4 \alpha - 1) > \alpha(G) \geq \alpha(G')$. It thus follows by Theorem~\ref{th::HamiltonConnectCE} that $G'$ is Hamilton-connected. Let $P_2$ be a Hamilton path of $G'$ whose endpoints are $u_1$ and $u_{4\alpha+1}$ and let $C'$ be the Hamilton cycle of $G$ that is obtained via the concatenation of $P_1$ and $P_2$. Note that $C'$ must contain an even number of edges of $R$ (and thus an odd, and in particular positive, number of edges of $G \setminus R$) as otherwise we would obtain a contradiction to Property (C2) from Lemma~\ref{lem::subgraphR}. Assume without loss of generality that $|A \cap V(P_1)| \geq |B \cap V(P_1)|$. Let $u_{i_1}, \ldots, u_{i_r}$ be the vertices of $A \cap V(P_1)$, where $r \geq 2 \alpha + 1$. Let $S_1 = \{u_{i_j} : 1 \leq j \leq r \textrm{ and } i_j \textrm{ is even}\}$ and let $S_2 = \{u_{i_j} : 1 \leq j \leq r \textrm{ and } i_j \textrm{ is odd}\}$; assume without loss of generality that $|S_1| \geq |S_2|$. Note that $S_1$ is an independent set in $C'$ of size $|S_1| \geq \alpha + 1$. Hence, there exist vertices $u_{i_s}$ and $u_{i_t}$ in $S_1$ such that $u_{i_s} u_{i_t} \in E(G)$. Since $S_1 \subseteq A$ by construction, it follows that $u_{i_s} u_{i_t} \in E(R)$. Let $P'$ be the subpath of $P_1$ whose endpoints are $u_{i_s}$ and $u_{i_t}$. Since both $i_s$ and $i_t$ are even, we conclude that $(C' \setminus P') \cup \{u_{i_s} u_{i_t}\}$ is a cycle satisfying Property (a).
\end{enumerate}
\end{enumerate} 

Let $C = (v_1, \ldots, v_{2r}, v_1)$ be a cycle of minimum length that satisfies Property (a); we claim that it satisfies Property (b) as well. Suppose for a contradiction that $|C| \geq 10 \alpha + 10$. For every $1 \leq i \leq 5$ let $A_i = \{v_{2j} : (i-1) \alpha + i \leq j \leq i \alpha + i\}$. It follows by the definition of $\alpha$ that for every $1 \leq i \leq 5$ there are indices $s_i, t_i \in A_i$ such that $s_i < t_i$ and $v_{s_i} v_{t_i} \in E(G)$. By the pigeonhole principle, at least three of these edges are in $E(R)$ or at least three are in $E(G) \setminus E(R)$. Assume without loss of generality that $v_{s_i} v_{t_i} \in E(R)$ for every $1 \leq i \leq 3$ (the remaining cases are analogous and can be treated similarly). For every $1 \leq i \leq 3$ let $P_i = (v_{s_i}, v_{s_i+1}, \ldots, v_{t_i})$. The path $P_i$ is said to be \emph{even} if $|E(P_i) \cap E(R)|$ is even; otherwise $P_i$ is said to be \emph{odd}. Another application of the pigeonhole principle implies that at least two of $P_1, P_2$, and $P_3$ are even or at least two of them are odd. Assume without loss of generality that $P_1$ and $P_2$ are both even (again, the remaining cases are analogous and can be treated similarly). It then follows that $(C \cup \{v_{s_1} v_{t_1}, v_{s_2} v_{t_2}\}) \setminus (P_1 \cup P_2)$ forms a cycle satisfying Property (a), contrary to the minimality of $C$.    
\end{proof} 

We are now in a position to prove Theorems~\ref{th::CycleSpaceCE}, \ref{th::CycleSpaceCELinearDegree}, and~\ref{th::CycleSpaceCDS}.

\begin{proof} [Proof of Theorem~\ref{th::CycleSpaceCE}]
Let $c'$ be the constant whose existence is ensured by Theorem~\ref{lem::disjointPaths} and let $c = \max\{42, 20 c'\}$. Let $\alpha = \alpha(G)$ and assume that $\kappa(G) \geq c \min \{\max \{\alpha, \log n\}, \alpha^2\}$.

Suppose for a contradiction that $\mathcal{C}_n(G) \neq \mathcal{C}(G)$. We follow the recipe that was presented in Section~\ref{sec::recipe}. That is, we need to handle steps (S1), (S2), and (S3). Our assumption that $\mathcal{C}_n(G) \neq \mathcal{C}(G)$ will then lead to the contradiction appearing in (S5).  

It follows by Theorem~\ref{th::HamCE} that $G$ is Hamiltonian. Let $R$ be as in the premise of Lemma~\ref{lem::subgraphR}; this takes care of (S1). 

Next, we take care of (S2a). It follows by Lemma~\ref{lem::CECycleLemma} that $G$ contains an even cycle $C = (v_1, \ldots, v_{2r}, v_1)$, having an odd number of edges in $R$, whose length is at most $10 \alpha + 8 < 20 \alpha$. 

Assume first that $\kappa(G) \geq c \log n$. We may then apply Theorem~\ref{lem::disjointPaths} with $(x_1, y_1) = (v_1, v_{r+1})$ and $(x_i, y_i) = (v_i, v_{2r-i+2})$ for every $2 \leq i \leq r$ to obtain both (S2b) and (S3).

Assume then that $\kappa(G) \geq c \alpha^2$. We begin by handling Step (S2b). Let $G_1 = G \setminus \{v_1, v_{r+1}\}$ and note that $\kappa(G_1) \geq \kappa(G) - 2 \geq 10 r$. Applying Theorem~\ref{th::BT} to $G_1$ with $(x_{i-1}, y_{i-1}) = (v_i, v_{2r-i+2})$ for every $2 \leq i \leq r$ yields pairwise vertex-disjoint paths $P_2, \ldots, P_r$ such that for every $2 \leq i \leq r$, the endpoints of $P_i$ are $v_i$ and $v_{2r-i+2}$. Choose such a path system for which $\max \{|V(P_i)| : 2 \leq i \leq r\}$ is minimal and note that this implies that $\max \{|V(P_i)| : 2 \leq i \leq r\} \leq 2 \alpha$. Indeed, suppose for a contradiction that there exists some $2 \leq i \leq r$ such that $P_i = (x_1, x_2, \ldots, x_t)$ for some $t \geq 2 \alpha + 1$. Let $S = \{x_i : 1 \leq i \leq t \textrm{ is odd}\}$, and note that $|S| \geq \alpha + 1$. It follows that there exist indices $1 \leq i < j \leq t$ such that $x_i x_j \in E(G)$, yielding the shorter path $P'_i = (x_1, \ldots, x_i, x_j, \ldots, x_t)$.

Finally, we take care of (S3). Let $W = V(P_2) \cup \ldots \cup V(P_r)$ and let $G' = G \setminus W$. Note that 
$$
\kappa(G') \geq \kappa(G) - |W| \geq c \alpha^2 - 40 \alpha^2 > \alpha \geq \alpha(G'),
$$
and thus $G'$ is Hamilton-connected by Theorem~\ref{th::HamiltonConnectCE}. We conclude that $G'$ admits a Hamilton path whose endpoints are $v_1$ and $v_{r+1}$.
\end{proof}

\begin{proof} [Proof of Theorem~\ref{th::CycleSpaceCELinearDegree}]
Let $\alpha = \alpha(G)$ and assume that $\delta(G) \geq \delta n$ and $\kappa(G) \geq \max \left\{180 \alpha, 2000 \delta^{-2} \right\}$. We may assume that $\alpha = O(\log n)$ and that $\delta = \Omega \left(1/\sqrt{\log n} \right)$ as otherwise Theorem~\ref{th::CycleSpaceCELinearDegree} is an immediate corollary of Theorem~\ref{th::CycleSpaceCE}. 

Suppose for a contradiction that $\mathcal{C}_n(G) \neq \mathcal{C}(G)$. We follow the recipe that was presented in Section~\ref{sec::recipe}. That is, we need to handle steps (S1), (S2), and (S3). Our assumption that $\mathcal{C}_n(G) \neq \mathcal{C}(G)$ will then lead to the contradiction appearing in (S5).  

Since $\kappa(G) \geq 180 \alpha \geq \alpha$, it follows by Theorem~\ref{th::HamCE} that $G$ is Hamiltonian. Let $R$ be as in the premise of Lemma~\ref{lem::subgraphR}; this takes care of (S1). 

Next, we take care of (S2a). Since $\kappa(G) \geq 12 \alpha + 8$, it follows by Lemma~\ref{lem::CECycleLemma} that $G$ contains an even cycle $C = (v_1, \ldots, v_{2k}, v_1)$, having an odd number of edges in $R$, whose length is at most $10 \alpha + 8$.


In preparation for Step (S3), let $G_1 = G \setminus V(C)$; note that $\delta(G_1) \geq \delta(G) - |V(C)| \geq \delta n/2$. Applying Lemma~\ref{lem::highConnectivity} to $G_1$ we obtain a partition $V(G_1) = V_1 \cup \ldots \cup V_t$ such that $\kappa(G_1[V_i]) \geq \delta^2 n/64$ and $|V_i| \geq \delta n/16$ hold for every $1 \leq i \leq t$; in particular $t \leq 16/\delta$. Let $x \in N_G(v_1, V_r)$ for some $1 \leq r \leq t$ and let $y \in N_G(v_{k+1}, V_s \setminus \{x\})$ for some $1 \leq s \leq t$; such vertices $x$ and $y$ exist since $\min \{\textrm{deg}_G(v_1), \textrm{deg}_G(v_{k+1})\} \geq \delta(G) \geq \delta n > |V(C)|$. In order to handle Step (S3) we wish to set aside a certain structure; this is done via the following claim.
\begin{claim} \label{cl::HamWalkFewEdges}
There is a sequence $(u_1, w_1, u_2, w_2, \ldots, u_q, w_q)$ of (not necessarily distinct) vertices of $G_1$ satisfying all of the following properties.
\begin{enumerate}
\item [\emph{(1)}] $u_1 \in V_r$ and $w_q \in V_s$;

\item [\emph{(2)}] $u_i, w_i \in V_1 \cup \ldots \cup V_t \setminus \{x,y\}$ for every $1 \leq i \leq q$;

\item [\emph{(3)}] $u_\ell w_\ell \in \bigcup_{1 \leq i < j \leq t} E_G(V_i, V_j) $ for every $1 \leq \ell \leq q$;

\item [\emph{(4)}] Setting $w_0 = x$ and $u_{q+1} = y$, for every $1 \leq i \leq t$ it holds that $|V_i \cap \{w_0, u_1, w_1, u_2,  \ldots, w_q, u_{q+1}\}| \leq 2 t$;

\item [\emph{(5)}] For every $1 \leq i \leq t$ and every $0 \leq j \leq q$ it holds that $w_j \in V_i$ if and only if $u_{j+1} \in V_i$. Moreover, for every $1 \leq i \leq t$ there exists some $0 \leq j \leq q$ such that $w_j \in V_i$, $u_{j+1} \in V_i$, and $w_j \neq u_{j+1}$;

\item [\emph{(6)}] If two vertices of the sequence $(u_1, w_1, u_2, w_2, \ldots, u_q, w_q)$ are equal, then there exists some $1 \leq j \leq q-1$ such that these two vertices are $w_j$ and $u_{j+1}$.
\end{enumerate}
\end{claim}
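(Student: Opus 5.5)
We encode the sequence as a closed-up walk in the collapsed multigraph. Let $H$ be the multigraph obtained from $G_1$ by collapsing each $V_i$ into a vertex $z_i$, as in Observation~\ref{obs::ConnCollapsedGraph}. The sequence $(u_1,w_1,\ldots,u_q,w_q)$ is then a walk $z_r = z^{(0)}, z^{(1)}, \ldots, z^{(q)} = z_s$ in $H$. The $\ell$-th step uses an actual edge $u_\ell w_\ell \in E_G(V_i,V_j)$ with $i \neq j$, which gives Property (3). The pairs $(w_j,u_{j+1})$ are the "entry and exit" points inside a single part, which gives the first half of Property (5). We need the walk to visit every $z_i$ (so every part is traversed), to pass through each $z_i$ at most $t$ times (so at most $2t$ vertices per part, giving Property (4)), and to use pairwise distinct edges of $H$, avoiding $x$ and $y$ as endpoints.

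\textbf{Step 1: edge-connectivity of $H$.} We have $\kappa(G_1) \ge \kappa(G) - |V(C)| \ge 180\alpha - 10\alpha - 8$, which is large, and this bounds the edge-connectivity of $G_1$ from below. By Observation~\ref{obs::ConnCollapsedGraph}, $H$ is highly edge-connected; in particular, $H$ has at least $\kappa(G)/2$ edges between any two sides of a cut. Since $\kappa(G) \ge 2000\delta^{-2} \gg t^2$ (recall $t \le 16/\delta$), after deleting all edges incident to $x$ or $y$ (at most $2\Delta$ of them; instead, delete only the at most $2t$ edges we might need) $H$ remains connected with many parallel edges across every cut.

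\textbf{Step 2: build the walk.} Take a spanning tree $T$ of $H$ together with a path $Q$ in $T$ from $z_r$ to $z_s$. Consider the walk that traverses $T$ by depth-first search, starting at $z_r$ and ending at $z_s$: edges of $Q$ are used once and the other tree edges twice. For each doubled edge, use two distinct parallel edges of $H$. Such edges exist because every cut has many $G$-edges, and we choose them with endpoints avoiding $x$, $y$ and all previously chosen vertices. This last requirement is feasible because each vertex has degree at most $n$ but we choose only $O(t)$ edges while each cut offers $\Omega(\kappa)$ edges. Choosing them greedily from a large matching across the cut (which exists since $\kappa(G_1)$ is large) keeps all endpoints distinct. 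The walk has length $q \le 2(t-1)$, so each part is visited at most $t$ times, which gives Property (4).

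\textbf{Step 3: coincidences and the "moreover" clauses.} Because the endpoints of all chosen edges are distinct and differ from $x$ and $y$, no two of the $u$'s and $w$'s coincide. Property (6) therefore holds vacuously, except in the degenerate case $t = 1$, $r = s$, where $q = 0$. In that case, or if we want a nonempty sequence, we handle the first requirement separately by adding a detour out and back. The condition "$w_j \neq u_{j+1}$ for some $j$ with $w_j, u_{j+1} \in V_i$" then holds automatically, since every visit to $z_i$ produces a pair $(w_j,u_{j+1})$ inside $V_i$ that are distinct by construction, or involve the distinct vertices $x$ or $y$.

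The main obstacle is the choice of distinct real edges realizing each step of the walk, and the need for large matchings across cuts. We derive these from the vertex-connectivity of $G_1$ via Menger's theorem: $\kappa(G_1)$ vertex-disjoint paths cross any cut, which yields that many disjoint crossing edges.
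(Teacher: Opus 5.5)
\textbf{The gap is in Step 2.} There you fix a spanning tree $T$ of $H$ in advance and then realize its edges by real edges of $G_1$ with pairwise distinct endpoints avoiding $x$ and $y$. High edge-connectivity of $H$ (or Menger in $G_1$) controls the edges crossing a \emph{bipartition of the parts}. It says nothing about the edges between the two specific parts $V_i,V_j$ joined by a tree edge $z_iz_j$: $E_G(V_i,V_j)$ may be a single edge, a star, or consist of edges at $x$ or $y$. The large matching across the cut of $T-z_iz_j$ consists of edges between other pairs of parts, and using one of them does not realize the step $z_i\to z_j$ of your depth-first walk.

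You genuinely need two vertex-disjoint $V_i$--$V_j$ edges at a doubled tree edge. If $z_j$ is a leaf and both traversals use the same edge $ab$ ($a\in V_i$, $b\in V_j$), then the only pair inside $V_j$ is $w_\ell=u_{\ell+1}=b$, which violates the second half of (5), and $u_\ell=w_{\ell+1}=a$ violates (6). Restricting $T$ to pairs of parts joined by a large matching does not immediately help either. Pigeonholing a matching of size about $\kappa(G_1)$ across a cut over at most $t^2/4$ pairs only guarantees some pair with $\Omega(\kappa(G_1)/t^2)$ disjoint edges. Under $\kappa(G)\ge 2000\delta^{-2}$ and $t\le 16/\delta$ this is just a constant, whereas a part of high degree in $T$ may have $\Theta(t)$ already-chosen vertices to avoid. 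So your claim in Step 3 that all chosen endpoints are distinct is unsupported, and with it (2), (5) and (6) are not established.

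\textbf{The missing idea is to choose the route adaptively}, deleting used vertices before each routing decision rather than fixing a tree first; this is what the paper does. After relabelling so that $r=1$ and $s=t$, build the sequence in $t-1$ rounds. In round $j$:
\begin{enumerate}
\item Delete from $G_1$ the vertices chosen in earlier rounds together with $x$ and $y$. This is at most $2t^2+2\le 512\delta^{-2}+2$ vertices, far fewer than $\kappa(G_1)$, so what remains is connected.
\item Collapse what remains of each part. The resulting multigraph is connected by Observation~\ref{obs::ConnCollapsedGraph}.
\item Take a path from $z_j$ to $z_{j+1}$ in this collapsed multigraph.
\end{enumerate}
The edges of this path have fresh endpoints by construction. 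A path meets each part at most once, so each round adds at most two vertices to each part, which gives (4) with $2(t-1)+2=2t$. Within a round the only possible coincidences are $w_\ell=u_{\ell+1}$. Consecutive rounds hand off inside $V_{j+1}$ through two distinct vertices, and $x\neq u_1$, $w_q\neq y$ at the ends; this yields both halves of (5) and also (6).
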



\begin{proof} 
For convenience assume that $r=1$ and $s=t$ (this is possible by relabeling the indices unless $r = s$; the latter case, however, can be treated similarly). We construct the required sequence in $t-1$ steps, that is, for every $1 \leq j \leq t-1$ we construct a subsequence $S_j := (u_{t_j+1}, w_{t_j+1}, \ldots, u_{t_j+q_j}, w_{t_j+q_j})$, where $\sum_{i=1}^{t-1} q_i = q$ and $t_j = \sum_{i=1}^{j-1} q_i$ for every $1 \leq j \leq t-1$ (in particular, $t_1 = 0$), satisfying the following two properties 
\begin{enumerate}

\item [(i)] $u_{t_j+1} \in V_j$ and $w_{t_j+q_j} \in V_{j+1}$; 

\item [(ii)] $|A_j \cap V_i| \leq 2$ holds for every $1 \leq i \leq t$, where $A_j := \{u_{t_j+1}, w_{t_j+1}, \ldots, u_{t_j+q_j}, w_{t_j+q_j}\}$;

\item [(iii)] $A_j \cap (A_1 \cup \ldots \cup A_{j-1} \cup \{x,y\}) = \varnothing$.
\end{enumerate}
Suppose that for some $1 \leq j \leq t-1$ we have already constructed $S_1, \ldots, S_{j-1}$ for which the above properties hold, and now wish to construct $S_j$. Let $\tilde{G}_j = G_1 \setminus (A_1 \cup \ldots \cup A_{j-1} \cup \{x,y\})$ and note that     
\begin{align} \label{eq::ConGj}
\kappa(\tilde{G}_j) &\geq \kappa(G) - |V(C)| - \sum_{i-1}^{j-1} |A_i| - 2 \geq \max \left\{180 \alpha, 2000 \delta^{-2} \right\} - (10 \alpha + 10) - 2 t^2 \geq 1,
\end{align}
where the second inequality holds by Lemma~\ref{lem::CECycleLemma}(b) and by (ii).

For every $1 \leq i \leq t$, let $V_i^j = V_i \setminus (A_1 \cup \ldots \cup A_{j-1} \cup \{x,y\})$, and let $H_j$ be the multigraph obtained from $\tilde{G}_j$ by collapsing each $V_i^j$ into a single vertex $v_i$. It follows by~\eqref{eq::ConGj} and by Observation~\ref{obs::ConnCollapsedGraph} that $H_j$ is connected. In particular, $H_j$ admits a path $P_j$ between $v_j$ and $v_{j+1}$; clearly the length of $P_j$ is at most $t-1$. Viewing $P_j$ as a directed path from $v_j$ to $v_{j+1}$, let $e_1, \ldots, e_{q_j}$ be the edges of $P_j$ appearing in this order. Viewing $e_1, \ldots, e_{q_j}$ as edges of $\tilde{G}_j$, for every $1 \leq i \leq q_j$, let $u_{t_j+i}$ be the tail of $e_i$ in $\tilde{G}_j$ and let $w_{t_j+i}$ be the head of $e_i$ in $\tilde{G}_j$ (note that it is possible that $w_{t_j+i} = u_{t_j+i+1}$ for some values of $i$). It is straightforward to verify that $S_j := (u_{t_j+1}, w_{t_j+1}, \ldots, u_{t_j+q_j}, w_{t_j+q_j})$ satisfies properties (i), (ii), and (iii). 

Having constructed $S_1, \ldots, S_{t-1}$ in this manner, it is then straightforward to verify that their concatenation, namely $(u_1, w_1, u_2, w_2, \ldots, u_q, w_q)$, satisfies properties (1)--(6).
\end{proof}

Returning to the proof of Theorem~\ref{th::CycleSpaceCELinearDegree}, fix a sequence $(u_1, w_1, u_2, w_2, \ldots, u_q, w_q)$ as per Claim~\ref{cl::HamWalkFewEdges}, and let $G_2 = G \setminus \{u_1, w_1, u_2, w_2, \ldots, u_q, w_q, x, y, v_1, v_{k+1}\}$.   

Next, we handle Step (S2b). Note that 
\begin{align*}
\kappa(G_2) &\geq \kappa(G) - (2q+4) \geq \max \left\{180 \alpha, 2000 \delta^{-2} \right\} - 2 t^2 - 2 \\ 
&\geq \max \left\{180 \alpha, 2000 \delta^{-2} \right\} - 514 \delta^{-2} \geq 90 \alpha \geq 10 k,
\end{align*}
where the second inequality holds by Claim~\ref{cl::HamWalkFewEdges}(4). Applying Theorem~\ref{th::BT} to $G_2$ with $(x_{i-1}, y_{i-1}) = (v_i, v_{2k-i+2})$ for every $2 \leq i \leq k$ yields pairwise vertex-disjoint paths $P_2, \ldots, P_k$ such that for every $2 \leq i \leq k$, the endpoints of $P_i$ are $v_i$ and $v_{2k-i+2}$. Moreover, as in the proof of Theorem~\ref{th::CycleSpaceCE}, we choose the paths $P_2, \ldots, P_k$ such that $\max \{|V(P_i)| : 2 \leq i \leq k\} \leq 2 \alpha$ holds.

Finally, we take care of Step (S3). Let $W = V(P_2) \cup \ldots \cup V(P_k)$ and let $G_3 = G \setminus W$. Note that $\{u_1, w_1, u_2, w_2, \ldots, u_q, w_q, x, y, v_1, v_{k+1}\} \subseteq V(G_3)$ and that
\begin{align} \label{eq::HighConnectivity}
\kappa(G_3[V_i \setminus W]) \geq \kappa(G_1[V_i]) - |W| \geq \delta^2 n/64 - 2 \alpha (5 \alpha + 4) \geq \delta^2 n/65
\end{align}
holds for every $1 \leq i \leq t$. Set $w_0 = x$ and $u_{q+1} = y$. For every $1 \leq i \leq t$ let $B_i = \{1 \leq j \leq q+1 : u_j \in V_i\}$, and let $b_i = |B_i|$; note that $b_i \leq t$ holds for every $1 \leq i \leq t$ by properties (4) and (5) from Claim~\ref{cl::HamWalkFewEdges}. For every $1 \leq i \leq t$, it then  follows by~\eqref{eq::HighConnectivity} and by Theorem~\ref{lem::disjointPaths} that there are pairwise vertex-disjoint paths $P^i_1, \ldots, P^i_{b_i}$ such that $V(P^i_1) \cup \ldots \cup V(P^i_{b_i}) = V_i \setminus W$ and for every $1 \leq j \leq b_i$ the endpoints of $P^i_j$ are $w_{j-1}$ and $u_j$ (such an application of Theorem~\ref{lem::disjointPaths} is possible by Property (5) of Claim~\ref{cl::HamWalkFewEdges} which ensures that $w_{j-1} \neq u_j$ holds for at least one $j \in B_i$. Note that whenever $w_{j-1} = u_j$, the path $P^i_j$ consists of a single vertex). Combining the paths in $\{P^i_j : 1 \leq i \leq t, j \in B_i\}$ and the edges in $\{u_i w_i : 1 \leq i \leq q\} \cup \{v_1 x, v_{k+1} y\}$ yields a Hamilton path of $G_3$ whose endpoints are $v_1$ and $v_{k+1}$.
\end{proof}


\begin{proof} [Proof of Theorem~\ref{th::CycleSpaceCDS}]
Let $\alpha = \alpha(G)$ and assume that $S_1, \ldots, S_t$ are pairwise disjoint CDSs of $G$, where $t \geq 16 \alpha + 12$. Note that $\kappa(G) \geq t \geq 16 \alpha + 12$.

Suppose for a contradiction that $\mathcal{C}_n(G) \neq \mathcal{C}(G)$. We follow the recipe that was presented in Section~\ref{sec::recipe}. That is, we need to handle steps (S1), (S2), and (S3). Our assumption that $\mathcal{C}_n(G) \neq \mathcal{C}(G)$ will then lead to the contradiction appearing in (S5).  

Since $\kappa(G) \geq 16 \alpha + 12 \geq \alpha$, it follows by Theorem~\ref{th::HamCE} that $G$ is Hamiltonian. Let $R$ be as in the premise of Lemma~\ref{lem::subgraphR}; this takes care of (S1). 

Next, we take care of (S2a). Since $\kappa(G) \geq 16 \alpha + 12$, it follows by Lemma~\ref{lem::CECycleLemma} that $G$ contains an even cycle $C = (v_1, \ldots, v_{2r}, v_1)$, having an odd number of edges in $R$, whose length is at most $10 \alpha + 8$. 

Next, we handle Step (S2b). Let $I = \{i \in [t] : S_i \cap V(C) \neq \varnothing\}$ and note that $|I| \leq |C| \leq 10 \alpha + 8$. Let $\varphi : \{2, \ldots, r\} \to [t] \setminus I$ be an injective mapping; such a mapping exists since $t \geq 16 \alpha + 12 \geq |I| + r-1$. For every $2 \leq i \leq r$ let $P_i$ be a path in $G$ whose endpoints are $v_i$ and $v_{2r-i+2}$ and such that $V(P_i) \setminus \{v_i, v_{2r-i+2}\} \subseteq S_{\varphi(i)}$; such a path exists since $S_{\varphi(i)}$ is a dominating set and $G[S_{\varphi(i)}]$ is connected.

 


Finally, we take care of (S3). Let $W = V(P_2) \cup \ldots \cup V(P_r)$ and let $G' = G \setminus W$. Let $J = \{i \in [t] : S_i \cap W = \varnothing\}$, and note that $|J| \geq t - |C| - (r-1) > \alpha$. Since $\bigcup_{i \in J} S_i \subseteq V(G')$, it follows that $\kappa(G') \geq |J| > \alpha \geq \alpha(G')$, and thus $G'$ is Hamilton-connected by Theorem~\ref{th::HamiltonConnectCE}. We conclude that $G'$ admits a Hamilton path whose endpoints are $v_1$ and $v_{r+1}$.
\end{proof}

We end this section with a construction which proves Proposition~\ref{prop::CElowerbound}, Proposition~\ref{prop::CDSlowerbound}, and Proposition~\ref{prop::Erdoslowerbound}.
\begin{proof} [Proof of Propositions~\ref{prop::CElowerbound}, \ref{prop::CDSlowerbound}, and~\ref{prop::Erdoslowerbound}]
Let $k \geq 2$ and $n \geq 3k-1$ be integers, where $n$ is odd (note that $n$ may be taken to be larger than $f(k)$ for any function $f$). Let $A \cup X \cup I$ be a partition of $[n]$, where $|X| = k$, $|I| = 2(k-1)$, and $|A| = n - (3k-2)$; note that $A \neq \varnothing$ and $|A \cup X| \geq 3$. Let $G = ([n], E)$ be the graph with edge set $E = \{xy : x \neq y \in A \cup X\} \cup \{xy : x \in X, y \in I\} \cup M$, where $M = \{x_i y_i : 1 \leq i \leq k-1\}$ is a perfect matching of $G[I]$. It is easy to see that $X$ is a minimum cut in $G$ implying that $\kappa(G) = |X| = k$. Similarly, it is easy to see that any maximum independent set of $G$ contains one endpoint of every edge of $M$ and one vertex of $A$ implying that $\alpha(G) = k$. It follows that $\kappa(G) \geq \alpha(G)$ and thus $G$ is Hamiltonian by Theorem~\ref{th::HamCE} (it is also easy to simply construct a Hamilton cycle of $G$). Moreover, since every vertex of $X$ is a CDS, $G$ admits $\alpha(G)$ pairwise disjoint CDSs. It remains to prove that $\mathcal{C}_n(G) \neq \mathcal{C}(G)$. In order to do so we first prove that every Hamilton cycle of $G$ includes every edge of $M$. Indeed, suppose for a contradiction that there exist some $1 \leq i \leq k-1$ and some Hamilton cycle $C$ of $G$ such that $x_i y_i \notin E(C)$. It follows that $C$ is a Hamlton cycle of $G' := G \setminus \{x_i y_i\}$ and thus, in particular, $G'$ is 1-tough. This, however, is not true since removing the $k$ vertices of $X$ from $G'$ creates $k+1$ connected components, namely $G[A]$, $x_i$, $y_i$, and $x_j y_j$ for every $j \in [k-1] \setminus \{i\}$. Having proved this claim, let $T$ be a triangle in $G[A \cup X]$. Since $T$ is an odd cycle, in order to generate it we need an odd number of Hamilton cycles. On the other hand, since $E(T) \cap M = \varnothing$ and every Hamilton cycle of $G$ includes all edges of $M$, in order to generate $T$ we need an even number of Hamilton cycles. We conclude that $T \in \mathcal{C}(G) \setminus \mathcal{C}_n(G)$.  
\end{proof}

\section{Graphs with a small bipartite independence number} \label{sec::BIcycleSpace} 
In this section we prove Theorem~\ref{th::CycleSpaceBI}. We begin by stating and proving several auxiliary results that facilitate our proof. The following known result takes care of Step (S3).


 \begin{theorem} [Theorem 6 in~\cite{ZBWL}] \label{th::HamConMY}
Let $G$ be a graph on at least 3 vertices. If $\delta(G) > \tilde{\alpha}(G)$, then $G$ is Hamilton-connected.
\end{theorem}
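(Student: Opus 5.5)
The plan is to adapt the extremal-path argument behind Theorem~\ref{th::HamMY} from cycles to paths with prescribed endpoints. The strict inequality $\delta(G) > \tilde{\alpha}(G)$ should pay for the two fixed endpoints. Fix $a, b \geq 1$ with $a + b = \tilde{\alpha}(G) + 1$ as in Definition~\ref{def::BipartiteIndependence}. Then any disjoint $A, B \subseteq V(G)$ with $|A| \geq a$ and $|B| \geq b$ span an edge.

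\textbf{Step 1 (connectivity).} I would first show that $G$ is connected and in fact $2$-connected, so that $x$--$y$ paths exist and can be extended. Suppose $S$ is a cut with $|S| \leq 1$. Every component has more than $\delta(G) - 1 \geq \tilde{\alpha}(G) \geq \max\{a,b\}$ vertices. Taking $A$ inside one component minus $S$ and $B$ inside another then gives two disjoint sets of sizes $a$ and $b$ with no edge between them, a contradiction. The same counting should give $\kappa(G) \geq 2$, and probably more.

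\textbf{Step 2 (extremal path).} Fix $x \neq y$ and take a longest $x$--$y$ path $P = (x = p_1, \ldots, p_m = y)$. Among all such paths, choose one that also maximises a secondary parameter, for instance the size of the largest component of $G \setminus V(P)$. Suppose $V(P) \neq V(G)$, and let $H$ be a component of $G \setminus V(P)$. Let $N = N_G(V(H)) \cap V(P)$. Every vertex of $H$ has degree at least $\tilde{\alpha}(G) + 1$, so either $|V(H)|$ is large or $|N|$ is large. The standard insertion argument then gives three facts:
\begin{itemize}
\item no two consecutive vertices of $P$ both lie in $N$, since otherwise $H$ could be inserted into $P$ between them;
\item the set $N^+$ of successors along $P$ of vertices of $N \setminus \{y\}$ has no edges to $H$;
\item for $p_i, p_j \in N$ with $i < j$, the edge $p_{i+1}p_{j+1}$ is absent, since otherwise a crossing reroute through $H$ would produce a longer $x$--$y$ path.
\end{itemize}
Symmetric statements hold for the set $N^-$ of predecessors. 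Note that $x$ and $y$ cannot be rotated away, so each of them costs one vertex in these counts. This is exactly why $\delta(G) > \tilde{\alpha}(G)$ is needed, rather than the $\delta(G) \geq \tilde{\alpha}(G)$ that suffices for Hamiltonicity.

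\textbf{Step 3 (the two sets).} I would look for $A$ and $B$ of sizes $a$ and $b$ with $E_G(A,B) = \varnothing$. There are two natural choices.
\begin{itemize}
\item If $|V(H)| \geq a$, take $A \subseteq V(H)$ and $B \subseteq N^+$. This needs $|N^+| \geq b$, which should follow from $|N| \geq \delta(G) - |V(H)| + 1$ together with the degree bound.
\item Otherwise every vertex of $H$ has at least $\delta(G) - |V(H)| + 1 \geq \tilde{\alpha}(G) - a + 2 = b + 1$ neighbours on $P$. In this case take $A = V(H) \cup N^+_{\text{part}}$ and $B = N^-_{\text{part}}$, or split $N^+$ itself into two pieces whose crossing edges are forbidden by the third fact in Step 2.
\end{itemize}
Either choice contradicts the definition of $\tilde{\alpha}(G)$. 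Hence $P$ is Hamiltonian, and $G$ is Hamilton-connected.

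\textbf{Main obstacle.} The hardest part will be the case where $H$ is small and the neighbours of $H$ on $P$ are spread out. There one must show that a suitable split of $N^+$ (or of $N^+ \cup N^-$) into a piece of size $a$ and a piece of size $b$ has no edges between them. Absence of edges between successors is only guaranteed for pairs that produce a valid crossing reroute. My first attempt would be to take the first $a$ and the last $b$ elements of $N^+$ along $P$ and verify the crossing argument pairwise, keeping careful track of the special role of $x$ and $y$. If that fails, I would fall back on the finer choice of $P$ made in Step 2, namely maximising the component size, as McDiarmid and Yolov do.
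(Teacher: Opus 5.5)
The paper gives no proof of this statement; it is imported from \cite{ZBWL}, so your proposal has to stand on its own. Your Step 2 facts are correct. In fact, your third bullet already says that $N^+=\{p_{i+1} : p_i\in N\setminus\{y\}\}$ is an independent set, because the crossing reroute works for every pair with $p_j\neq y$. So the ``main obstacle'' you describe does not exist.

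The real gap is Step 3.
\begin{itemize}
\item In your first bullet, $|N|\ge\delta(G)-|V(H)|+1$ only gives $|N^+|\ge\delta(G)-|V(H)|$. This is below $b$ as soon as $|V(H)|>\delta(G)-b$, and it is vacuous for large $H$. So when $H$ is big and $N$ is small, $N^+$ is too small to host $B$, and you have no other candidate set.
\item In your second bullet, the option $B\subseteq N^-$ fails outright. If $p_i,p_{i+3}\in N$, then $p_{i+1}\in N^+$ and $p_{i+2}\in N^-$ are adjacent. Also $N^+\cap N^-\neq\varnothing$ whenever $p_i,p_{i+2}\in N$.
\end{itemize}

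The missing idea is to use the whole far side $D=V(G)\setminus(V(H)\cup N)$, and to split on $|N^+|$ rather than on $|V(H)|$. The set $D$ sends no edges to $H$. It is nonempty, since no two consecutive vertices of $P$ lie in $N$. Looking at the degree of any of its vertices gives $|D|\ge\delta(G)+1-|N|$.

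Take $a\le b$. Note that $|V(H)|+|N^+|\ge|V(H)|+|N|-1\ge\delta(G)\ge a+b$.
\begin{itemize}
\item If $|N^+|\ge a$, choose $A\subseteq N^+$ with $|A|=a$, and $B\subseteq(N^+\setminus A)\cup V(H)$ with $|B|=b$. There are no $A$--$B$ edges, because $N^+$ is independent and has no neighbours in $H$.
\item If $|N^+|<a$, then $|N|\le a$. Hence $|V(H)|\ge\delta(G)+1-a\ge b+1$ and $|D|\ge\delta(G)+1-a\ge b+1$. Taking $A\subseteq D$ and $B\subseteq V(H)$ gives the forbidden pair.
\end{itemize}

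This closes the argument. Step 1 is needed only for plain connectivity, so that an $x$--$y$ path exists. Neither $N^-$ nor the secondary maximisation is needed. The strict inequality $\delta(G)>\tilde{\alpha}(G)$ enters exactly through $|N^+|\ge|N|-1$: only $y$, not $x$, loses its successor.
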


The following result takes care of Step (S2a).
\begin{lemma} \label{lem::BICycleLemma}
Let $G$ be a graph satisfying $\delta(G) \geq 2 \tilde{\alpha}(G) + 9$. Let $R \neq G$ be a subgraph of $G$ such that $e_R(A, B) \geq e_G(A, B)/2$ and $R \neq G[A, B]$ hold for every partition $V(G) = A \cup B$. Then, there exists a cycle $C \subseteq G$ for which the following two properties hold.
\begin{enumerate}
\item [\emph{(a)}] $|C|$ is even and $|E(C) \setminus E(R)| = 1$;

\item [\emph{(b)}] $|C| \leq 12$.
\end{enumerate}
\end{lemma}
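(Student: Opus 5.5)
We need an even cycle of length at most 12 with exactly one edge outside $R$. Set $\tilde\alpha = \tilde{\alpha}(G)$ and let $a+b=\tilde\alpha+1$ be the witnessing sizes from Definition~\ref{def::BipartiteIndependence}. We will repeatedly use the following consequence: any two disjoint vertex sets of sizes at least $a$ and $b$, respectively, span an edge of $G$. By Property (C3)-type hypothesis, every vertex $v$ satisfies $\deg_R(v) \geq \deg_G(v)/2 \geq \delta(G)/2 \geq \tilde\alpha + 4.5$ (apply it to the partition $\{v\}, V\setminus\{v\}$). Hence every vertex has at least $\tilde\alpha+5$ neighbours in $R$, and $R$-neighbourhoods are large compared with $a$ and $b$.

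\textbf{Step 1: the base edge.} Since $R \neq G$, fix an edge $xy \in E(G)\setminus E(R)$. The goal is to find an $R$-path from $x$ to $y$ of odd length at most $11$. Together with $xy$ it gives the required cycle, whose only non-$R$ edge is $xy$. So it suffices to show that $\mathrm{dist}$ in $R$ with odd parity between $x$ and $y$ is bounded.

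\textbf{Step 2: small odd-distance balls in $R$.} Let $N_1 = N_R(x)$ and $M_1 = N_R(y)$, both of size at least $\tilde\alpha+5 \ge \max\{a,b\}+4$.
\begin{itemize}
\item If some edge of $R$ goes between $N_R(x)\setminus\{y\}$ and $N_R(y)\setminus\{x\}$, we get an $R$-path $x,u,v,y$ of length $3$.
\item Otherwise, look at $G$-edges between these sets. The bipartite-independence property guarantees $G$-edges between any $a$-subset and $b$-subset. We want to upgrade these to $R$-edges, or to paths. For this, apply the hypothesis $e_R(A,B)\ge e_G(A,B)/2$ to cleverly chosen partitions, for instance $A$ equal to $x$'s $R$-neighbourhood together with $x$, or to second neighbourhoods.
\end{itemize}

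The cleaner route is a BFS argument in $R$ from $x$ and from $y$, tracking layers of the same parity. If $R$ has no short odd $x$–$y$ path, then the sets $X_i$ of vertices at $R$-walk-distance $i$ from $x$ (for parity-restricted walks of length at most $5$) and the analogous sets $Y_j$ from $y$ must have no $R$-edges between $X_i$ and $Y_j$ when $i+j$ is even. Each such set has size at least $\tilde\alpha+5$. Choose $A$ to be the union of the "even-from-$x$/odd-from-$y$" side and $B$ its complement. The hypothesis $e_R(A,B)\ge e_G(A,B)/2$, together with the bipartite-independence bound forcing many $G$-edges, should then give a contradiction. Alternatively, one may use $R \neq G[A,B]$ when $R$ is bipartite: then some non-$R$ edge crosses, but with the wrong parity, so we pick that edge as the new base edge instead. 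The budget of $5$ steps on each side, plus the connecting edge, gives odd length at most $11$, hence $|C| \le 12$.

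\textbf{Step 3: parity handling.} The main obstacle is the parity. Short $R$-paths between $x$ and $y$ exist easily from the degree bound and $\tilde\alpha$, but they may all be even. I would handle this as follows.
\begin{itemize}
\item If $R$ restricted to a bounded-radius ball around $xy$ contains a short odd cycle, it flips the parity. To find such a cycle, look for an $R$-edge inside some $R$-neighbourhood layer. The sets $N_R(u)$, of size at least $\tilde\alpha+5$, can be split into an $a$-part and a $b$-part, which produces $G$-edges inside the layer. If such an edge is a non-$R$ edge, we re-choose it as the base edge $xy$, whose endpoints are then joined by a short $R$-path of even length through $u$. This is where the slack $\delta \ge 2\tilde\alpha+9$ is consumed.
\item If instead $R$ is locally bipartite, the condition $R\neq G[A,B]$ supplies a non-$R$ edge between the two colour classes. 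Its endpoints are at odd $R$-distance, which must be shown to be short. I expect to bound this distance by a diameter-type argument: two disjoint sets of sizes at least $\max\{a,b\}$ are $G$-adjacent, so $R$-balls of radius $2$ already cover almost all of $V$.
\end{itemize}
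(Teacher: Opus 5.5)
There is a genuine gap: the proposal never establishes either part of the lemma. The existence of a cycle with property (a) and the bound $|C|\le 12$ are both deferred to steps phrased as ``should then give a contradiction'' and ``I expect to bound this distance''. The concrete steps you do sketch would not work as stated:

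\begin{itemize}
\item Fixing one non-$R$ edge $xy$ up front and hunting for a short odd $R$-path between its endpoints is the wrong order of quantifiers. If $R$ is bipartite and $x,y$ lie in one colour class, no odd $R$-path exists at all. You notice this and re-choose the edge, but you still owe a length bound you never supply, and nothing in your plan bounds the shortest \emph{odd} $R$-path between the endpoints of a given edge.
\item Your layers are defined via parity-restricted \emph{walks}, but an odd $x$--$y$ walk need not yield an odd $x$--$y$ path. It can be an even path plus an odd cycle hanging off a cut vertex, so it does not close up to a cycle with property (a).
\item The cut step is not an argument. The hypothesis $e_R(A,B)\ge e_G(A,B)/2$ concerns partitions of all of $V(G)$, while your sets $X_i,Y_j$ neither cover $V(G)$ nor are disjoint once $R$ has odd cycles, and no edge count is given.
\item The diameter remark is a non sequitur. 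The definition of $\tilde{\alpha}(G)$ supplies $G$-edges between large disjoint sets, not $R$-edges, and a short $R$-distance would not give the right parity anyway.
\item In your parity bullet, a non-$R$ edge $pq$ with $p,q\in N_R(u)$ closes $(p,u,q)$ into a triangle, which has the wrong parity.
\end{itemize}

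\textbf{Existence.} The paper first proves that $R$ is $2$-connected. If $R\setminus v$ had a smallest component $S$, the definition of $\tilde{\alpha}(G)$ gives $e_G(S,V(G)\setminus(S\cup\{v\}))>|S|$. The cut condition for $(S,V(G)\setminus S)$ then forces $\deg_R(v,S)>|S|$, which is absurd. With this in hand:
\begin{itemize}
\item If $R$ is bipartite, $R\neq G[A,B]$ gives a non-$R$ edge across the colour classes, closed by any $R$-path.
\item Otherwise, Menger joins the endpoints of a non-$R$ edge by disjoint $R$-paths to an odd $R$-cycle, whose two arcs give both parities.
\end{itemize}

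\textbf{Length bound.} No distance estimate is needed. Take a \emph{shortest} cycle $C=(v_1,\ldots,v_{2r},v_1)$ with property (a), where $v_1v_2\notin E(R)$, and suppose $2r\ge 14$. An $R$-chord joining indices of opposite parity, or several $R$-chords at both $v_2,v_{2r}$ or at both $v_7,v_{2r-5}$, directly gives a shorter such cycle. Otherwise, say, $v_2$ and $v_7$ have few $R$-neighbours on $C$. Pick an $a$-set $A\subseteq N_R(v_7)\setminus V(C)$ and a disjoint $b$-set $B\subseteq N_R(v_2)\setminus V(C)$. The $G$-edge $xy$ ($x\in A$, $y\in B$) guaranteed by $\tilde{\alpha}(G)$ yields $(v_1,v_2,y,x,v_7,\ldots,v_{2r},v_1)$ if $xy\in E(R)$, and the $8$-cycle $(v_2,\ldots,v_7,x,y,v_2)$ otherwise. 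Both are even, both have exactly one non-$R$ edge, and both are shorter than $C$.

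Since $v_2$ and $v_7$ are at odd distance along the $R$-path $C\setminus\{v_1v_2\}$, the $G$-edge produces progress whether or not it lies in $R$. This ``either way wins'' step is how $G$-edges supplied by $\tilde{\alpha}(G)$ are turned into information about $R$, and it is absent from your plan.
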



\begin{proof}
Let $k = \tilde{\alpha}(G)$ and let $a \leq b$ be positive integers such that $a + b = k + 1$ and $E_G(A,B) \neq \emptyset$ whenever $A \subseteq V(G)$ and $B \subseteq V(G) \setminus A$ are sets of sizes $|A| = a$ and $|B| = b$. It follows by the premise of the lemma that $\delta(R) \geq \lceil \delta(G)/2 \rceil \geq a + b + 4$. We claim that $R$ is 2-connected. Indeed, suppose for a contradiction that there exists a vertex $v \in V(G)$ such that $R \setminus v$ is disconnected. Let $S$ be the vertex-set of a smallest connected component of $R \setminus v$ and let $T = V(R) \setminus (S \cup \{v\})$. Let $s = |S|$ and note that 
\begin{align} \label{eq::mediumDegree}
n/2 > s \geq \delta(R) > a+b.
\end{align}
Let $S_1$ and $S_2$ be disjoint subsets of $S$, where $|S_1| = a$ and $|S_2| = b$. For $i \in \{1,2\}$ let $T_i = N_G(S_i) \cap T$. It follows by the definition of $a$ and $b$ that $|T_1| \geq |T| - b + 1 \geq n - s - b$ and, similarly, $|T_2| \geq |T| - a + 1 \geq n - s - a$. Therefore $e_G(S, T) \geq e_G(S_1, T_1) + e_G(S_2, T_2) \geq |T_1| + |T_2| \geq 2 (n - s) - (a+b) > s$, where the last inequality holds by~\eqref{eq::mediumDegree}. We conclude that
$$
\deg_R(v, S) = e_R(S, T \cup \{v\}) \geq e_G(S, T \cup \{v\})/2 \geq (e_G(S, T) + \deg_R(v, S))/2 > (s + \deg_R(v, S))/2,
$$
which in turn implies the obvious contradiction $\deg_R(v, S) > s$.  


\medskip

Next, we prove that there exists a cycle satisfying Property (a); we distinguish between the following two cases.
\begin{enumerate}
\item [(1)] $R$ is bipartite. Let $A \cup B$ be the bipartition of $R$. Since $R \neq G[A, B]$ by the premise of the lemma, there exist vertices $x \in A$ and $y \in B$ such that $xy \in E(G) \setminus E(R)$. Since $R$ is connected, it admits a path $P$ whose endpoints are $x$ and $y$. Since $R$ is bipartite, the length of $P$ is odd. Hence, combined with the edge $xy$, this yields a cycle which satisfies Property (a). 

\item [(2)] $R$ is not bipartite. Let $C' = (x_1, \ldots, x_{2t-1}, x_1)$ be an odd cycle in $R$. Let $xy \in E(G) \setminus E(R)$ be an arbitrary edge; such an edge exists since $R \neq G$ holds by the premise of the lemma. Since $R$ is 2-connected, it follows by Menger's Theorem (see, e.g., Exercise 4.2.28 in~\cite{West}) that there are distinct vertices $u, u' \in V(C')$ for which there are vertex-disjoint paths $P_1$ and $P_2$ in $R$ such that the endpoints of $P_1$ are $u$ and $x$, the endpoints of $P_2$ are $u'$ and $y$, $V(C') \cap V(P_1) = \{u\}$, and $V(C') \cap V(P_2) = \{u'\}$. Note that it is possible that $V(P_1) = \{u\} = \{x\}$ or $V(P_2) = \{u'\} = \{y\}$. Combining the edge $xy$, the paths $P_1$ and $P_2$, and one of the two paths connecting $u$ and $u'$ along $C'$ yields a cycle $C$ which satisfies Property (a).






\end{enumerate}


Let $C = (v_1, \ldots, v_{2r}, v_1)$ be a cycle in $G$ of minimum length out of all the cycles that satisfy Property (a). Assume without loss of generality that $v_1 v_2$ is the unique edge of $E(C) \setminus E(R)$. We claim that $C$ satisfies Property (b) as well. Suppose for a contradiction that $|C| \geq 14$. We distinguish between the following cases.
\begin{enumerate}
\item [(1)] There exists an edge $v_i v_j \in E(R)$ which is a chord of $C$ and, moreover, $i$ and $j$ are of opposite parity. In this case the unique cycle in $C \cup \{v_i v_j\}$ which contains both $v_1 v_2$ and $v_i v_j$ satisfies Property (a), contrary to the minimality of $C$.

\item [(2)] $\textrm{deg}_R(v_2, V(C)) \geq 4$ and $\textrm{deg}_R(v_{2r}, V(C)) \geq 4$. Since we are not in Case (1), there exist $2 \leq i, j \leq r-1$ such that $v_2 v_{2i} \in E(R)$ and $v_{2r} v_{2j} \in E(R)$. If $i = j$, then $(v_1, v_2, v_{2i}, v_{2r}, v_1)$ is a cycle satisfying Property (a). If $i < j$, then $(v_1, v_2, v_{2i}, v_{2i+1}, \ldots, v_{2j}, v_{2r}, v_1)$ is a cycle satisfying Property (a). Finally, if $i > j$, then $(v_1, v_2, v_{2i}, v_{2i-1}, \ldots, v_{2j}, v_{2r},  v_1)$ is a cycle satisfying Property (a). In all three cases we arrive at a contradiction to the minimality of $C$.

\item [(3)] $\textrm{deg}_R(v_7, V(C)) \geq 7$ and $\textrm{deg}_R(v_{2r-5}, V(C)) \geq 7$ (note that $2r - 5 > 7$ holds by our assumption that $|C| \geq 14$). Since we are not in Case (1), there exist $1 \leq i \leq r-4$ such that $v_7 v_{2i+1} \in E(R)$ and $4 \leq j \leq r-1$ such that $v_{2r-5} v_{2j+1} \in E(R)$. Similarly to Case (2), the unique cycle in $C \cup \{v_7 v_{2i+1}, v_{2r-5} v_{2j+1}\}$ which contains $v_1 v_2$, $v_7 v_{2i+1}$, and $v_{2r-5} v_{2j+1}$ satisfies Property (a), contrary to the minimality of $C$.


\item [(4)] There exist $i \in \{2, 2r\}$ and $j \in \{7, 2r-5\}$ such that $\textrm{deg}_R(v_i, V(C)) \leq 3$ and $\textrm{deg}_R(v_j, V(C)) \leq 6$. Assume without loss of generality that $i=2$ and $j=7$ are such indices; the remaining cases can be treated similarly. Let $A$ be a subset of $N_R(v_7) \setminus V(C)$ of size $a$ and let $B$ be a subset of $N_R(v_2) \setminus (V(C) \cup A)$ of size $b$; such sets exist since $\delta(R) \geq a + b + 4 \geq a + 6$. By the definition of $\tilde{\alpha}(G)$ there exist $x \in A$ and $y \in B$ such that $xy \in E(G)$. If $xy \in E(R)$, then $(v_1, v_2, y, x, v_7, v_8, \ldots, v_{2r},  v_1)$ is a cycle satisfying Property (a). Otherwise, $xy \in E(G) \setminus E(R)$ and then $(v_2, \ldots, v_7, x, y, v_2)$ is a cycle satisfying Property (a). In both cases we arrive at a contradiction to the minimality of $C$.
\end{enumerate}
\end{proof}

The following result takes care of Step (S2b).

\begin{lemma} \label{lem::fewNeighbours}
Let $G$ be a graph satisfying $\delta(G) \geq \max \left\{2 \tilde{\alpha}(G) + 9, \tilde{\alpha}(G) + 15 \right\}$ and let $R$ be as in Lemma~\ref{lem::BICycleLemma}. Let $C = (v_1, \ldots, v_{2r}, v_1)$ be a cycle of minimum possible length of all cycles in $G$ satisfying properties \emph{(a)} and \emph{(b)} from Lemma~\ref{lem::BICycleLemma}. Then there exist vertex-disjoint paths $P_2, \ldots, P_r$ such that the following properties hold for every $2 \leq i \leq r$.
\begin{enumerate}
\item [\emph{(i)}] The endpoints of $P_i$ are $v_i$ and $v_{2r-i+2}$, and $V(P_i) \cap V(C) = \{v_i, v_{2r-i+2}\}$;

\item [\emph{(ii)}] The length of $P_i$ is 2 or 3;

\item [\emph{(iii)}] $\emph{deg}(v, V(P_i)) \leq 3$ holds for every $v \in V(G) \setminus (V(C) \cup V(P_2) \cup \ldots \cup V(P_r))$.
\end{enumerate}
\end{lemma}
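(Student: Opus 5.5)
We will construct the paths $P_2, \ldots, P_r$ one at a time, greedily, for $i = 2, 3, \ldots, r$. Each new path is placed so that it avoids $V(C)$ and all previously built paths. Write $k = \tilde{\alpha}(G)$ and fix $a \le b$ with $a+b = k+1$ as in Definition~\ref{def::BipartiteIndependence}. Since $|C| \le 12$, we have $r \le 6$, so at most five paths are needed and their union $U$ has at most $4 \cdot 5 = 20$ internal vertices. Let $Z$ denote the set of vertices that are currently forbidden, namely $V(C)$ together with the vertices of the paths already built; thus $|Z| \le 12 + 20$ throughout.

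\textbf{Building $P_i$.} Suppose $P_2, \ldots, P_{i-1}$ are already built, and set $x = v_i$, $y = v_{2r-i+2}$. We first try a path of length $2$: if $x$ and $y$ have a common neighbour outside $Z$, we use it. Otherwise we aim for a path of length $3$. Let $N_x = N_G(x) \setminus Z$ and $N_y = N_G(y) \setminus Z$. The point of requiring $\delta(G) \ge k + 15$ is that each of $x$ and $y$ keeps many neighbours outside $Z$. Here we should not count all of $Z$ against them. Instead we use the minimality of $C$ (through the chord and neighbourhood arguments of Lemma~\ref{lem::BICycleLemma}) to bound $\deg(v_j, V(C))$, and we use property (iii) of the earlier paths to bound how many neighbours of a cycle vertex lie inside the earlier paths. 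This should give $|N_x|, |N_y| \ge a + b$. We can then choose disjoint sets $A \subseteq N_x$ of size $a$ and $B \subseteq N_y \setminus A$ of size $b$. By the definition of $\tilde{\alpha}(G)$ there is an edge $x'y'$ with $x' \in A$ and $y' \in B$, and then $(x, x', y', y)$ is a path of length $3$, as required by (ii). Property (i) holds by construction, since the internal vertices avoid $Z \supseteq V(C)$.

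\textbf{Property (iii).} This is the main obstacle. For each path we must ensure that every vertex $v$ outside $V(C) \cup U$ has at most $3$ neighbours in $V(P_i)$. Since $|V(P_i)| \le 4$, the condition is violated only when $P_i$ has length $3$ and $v$ is adjacent to all four of its vertices. In that case $v$ is a common neighbour of $x$ and $y$ outside the forbidden set, so we would have chosen a path of length $2$ instead. When $P_i$ has length $2$, it has only $3$ vertices and (iii) is automatic. The subtle issue is that $Z$ grows over time: a vertex $v$ may lie in $Z$ when $P_i$ is built and yet lie outside $V(C) \cup U$ at the end. This is impossible, because a vertex of $Z$ belongs to $V(C)$ or to an earlier path, and hence to $V(C) \cup U$. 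The opposite direction also works: a vertex outside the final $V(C) \cup U$ was outside $Z$ at every stage, so the common-neighbour argument applies to it. Hence the rule ``use length $2$ whenever possible'' yields (iii).

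\textbf{The counting step.} The step I expect to be delicate is showing $|N_x|, |N_y| \ge a + b$. A naive subtraction of up to $32$ forbidden vertices is too costly, since we only have $\delta(G) \ge k + 15$. I would first establish $\deg(v_j, V(C)) \le 6$ for every cycle vertex. This follows from the minimality of $C$ as in cases (1)--(3) of Lemma~\ref{lem::BICycleLemma}, except that here the argument must cover edges of $G \setminus R$ as well. Next, each earlier path contributes at most $3$ neighbours of $v_j$ outside $C$, and in fact only its at most $2$ internal vertices count. The total deficit is then at most $6 + 2 \cdot 4 = 14$, so $|N_x| \ge \delta(G) - 14 \ge k + 1 = a + b$. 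If this bound turns out to be too tight, I would reorder the paths, or exploit that some of the earlier paths have length $2$, to recover the missing slack.
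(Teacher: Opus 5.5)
\textbf{Gap in the counting step.} Your construction is the same as the paper's. You build $P_2,\ldots,P_r$ greedily, use a common neighbour outside the forbidden set when one exists, and otherwise take a length-$3$ path from the definition of $\tilde{\alpha}(G)$. You derive (iii) from the fact that a vertex adjacent to all four vertices of a length-$3$ path $P_i$ would have been a common neighbour of $v_i$ and $v_{2r-i+2}$. The gap is in the counting step, which you yourself flagged.

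The bound $\deg(v_j,V(C))\le 6$ does not follow from the minimality of $C$. Minimality gives only that $C$ has no chord $v_sv_t$ with $s,t$ of opposite parity. For such a chord in $R$, take the side of $C$ containing $v_1v_2$; for such a chord outside $R$, take the other side. This already covers edges of $G\setminus R$. Hence $\deg_G(v_j,V(C))\le 2+(r-1)=r+1\le 7$.

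Chords to vertices of the same parity cannot be excluded by an argument at a single vertex. For $r=6$, join $v_2$ by $R$-edges to $v_4,v_6,\ldots,v_{12}$. A cycle in $C$ plus these chords that uses exactly one chord has odd length. A cycle that uses two of them uses both as its edges at $v_2$, so it avoids $v_1v_2$ and has no non-$R$ edge. Thus no shorter cycle with property (a) arises, yet $\deg_G(v_2,V(C))=7$. Cases (2) and (3) of Lemma~\ref{lem::BICycleLemma} concern pairs of vertices and give no bound on an individual degree.

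So the deficit is only bounded by $7+2\cdot 4=15$, and you get $|N_x|,|N_y|\ge\delta(G)-15\ge k$. That is one short of the $a+b=k+1$ you require. Neither reordering the paths nor hoping that earlier paths have length $2$ removes this worst case.

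\textbf{The missing observation.} Asking for $a+b$ neighbours is more than you need. You enter the length-$3$ case only when $x$ and $y$ have no common neighbour outside $Z$, so $N_x\cap N_y=\emptyset$. It therefore suffices to take any $A\subseteq N_x$ with $|A|=a$ and any $B\subseteq N_y$ with $|B|=b$; these are automatically disjoint. Since $|N_x|,|N_y|\ge k=a+b-1\ge b\ge a$, such sets exist, and the definition of $\tilde{\alpha}(G)$ gives the edge $x'y'$. With this change and the correct bound $r+1\le 7$, your argument coincides with the paper's proof, which concludes exactly via $\delta(G)-15\ge\tilde{\alpha}(G)\ge b\ge a$.
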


\begin{proof}
Let $a \leq b$ be positive integers such that $a + b = \tilde{\alpha}(G) + 1$ and $E_G(A,B) \neq \emptyset$ whenever $A \subseteq V(G)$ and $B \subseteq V(G) \setminus A$ are sets of sizes $|A| = a$ and $|B| = b$. We construct the required paths $P_2, \ldots, P_r$ one by one as follows. Assume that for some $2 \leq i \leq r$ we have already built $P_2, \ldots, P_{i-1}$, each satisfying properties (i), (ii), and (iii), and now wish to construct $P_i$. Let $W_i = \left(\{v_1, \ldots, v_{2r}\} \cup V(P_2) \cup \ldots \cup V(P_{i-1}) \right) \setminus \{v_i, v_{2r-i+2}\}$. If there exists a vertex $w \in V(G) \setminus W_i$ which is a common neighbour of $v_i$ and $v_{2r-i+2}$, then we set $P_i = (v_i, w, v_{2r-i+2})$; it is evident that $P_i$ satisfies properties (i), (ii), and (iii) in this case. 

Assume then that $A := N_G(v_i) \setminus W_i$ and $B := N_G(v_{2r-i+2}) \setminus W_i$ are disjoint sets. It follows by the minimality of $|C|$ that $C$ has no chord $v_s v_t$ such that $s$ is even and $t$ is odd. In particular, $\max \{\textrm{deg}_G(v_i, V(C)), \textrm{deg}_G(v_{2r-i+2}, V(C))\} \leq r+1 \leq 7$, where the last inequality holds by Property (b) from Lemma~\ref{lem::BICycleLemma}. Moreover, it follows by Property (ii) that $\max \{\textrm{deg}_G(v_i, W_i \setminus V(C)), \textrm{deg}_G(v_{2r-i+2}, W_i \setminus V(C))\} \leq 2 (i-2) \leq 2(r-2) \leq 8$. Therefore $\min \{|A|, |B|\} \geq \delta(G) - 15 \geq \tilde{\alpha}(G) \geq b \geq a$. Hence, by the definition of $a$ and $b$ there are vertices $x \in A$ and $y \in B$ such that $xy \in E(G)$. Set $P_i = (v_i, x, y, v_{2r-i+2})$ and note that $P_i$ satisfies properties (i) and (ii). Since no vertex of $V(G) \setminus W_i$ is adjacent to both $v_i$ and $v_{2r-i+2}$, the path $P_i$ satisfies Property (iii) as well.     
\end{proof}

Having Theorem~\ref{th::HamConMY} and Lemmas~\ref{lem::BICycleLemma} and~\ref{lem::fewNeighbours} at our disposal, proving Theorem~\ref{th::CycleSpaceBI} becomes fairly straightforward. 

\begin{proof} [Proof of Theorem~\ref{th::CycleSpaceBI}]
Suppose for a contradiction that $\mathcal{C}_n(G) \neq \mathcal{C}(G)$. We follow the recipe that was presented in Section~\ref{sec::recipe}. That is, we need to handle steps (S1), (S2), and (S3). Our assumption that $\mathcal{C}_n(G) \neq \mathcal{C}(G)$ will then lead to the contradiction appearing in (S5).  

It follows by Theorem~\ref{th::HamMY} that $G$ is Hamiltonian. Let $R$ be as in the premise of Lemma~\ref{lem::subgraphR}; this takes care of (S1). 

Next, we take care of (S2). Starting with (S2a), it follows by Lemma~\ref{lem::BICycleLemma} that $G$ contains an even cycle of length at most $12$, and such that all its edges but exactly one are in $R$; let $C = (v_1, \ldots, v_{2r}, v_1)$ be such a cycle of minimum possible length. As for (S2b), the required paths $P_2, \ldots, P_r$ are attained via Lemma~\ref{lem::fewNeighbours}. 

Finally, we take care of (S3). Let $W = V(P_2) \cup \ldots \cup V(P_r)$. Let $G' = G \setminus W$. Similarly to the proof of Lemma~\ref{lem::fewNeighbours}, it follows by the minimality of $|C|$ and by Property (ii) from that lemma that $\max \{\textrm{deg}_G(v_1, W), \textrm{deg}_G(v_{r+1}, W)\} \leq r+1 + 2 (r-1) \leq 17$. Moreover, for every vertex $v \in V(G) \setminus W$, it follows by Property (iii) from Lemma~\ref{lem::fewNeighbours} that $\textrm{deg}_G(v, W) \leq 3 (r-1) \leq 15$. Hence, $\delta(G') \geq \delta(G) - 17 > \tilde{\alpha}(G) \geq \tilde{\alpha}(G')$ and thus $G'$ is Hamilton-connected by Theorem~\ref{th::HamConMY}. In particular, there is a Hamilton path of $G'$ whose endpoints are $v_1$ and $v_{r+1}$.
\end{proof}


\section{Bipartite graphs} \label{sec::bipartite}

Our aim in this section is to prove Theorem~\ref{th::BipCEcycleSpace}. We begin by stating and proving several auxiliary results that facilitate our proof. The following result takes care of Step (S2a).
\begin{lemma} \label{lem::BipCycleLemma}
Let $G$ be a balanced bipartite graph satisfying $\alpha_{\emph{BIP}}(G) \leq 2 \delta(G) - 6$. Let $R \neq G$ be a subgraph of $G$ such that $e_R(X, Y) \geq e_G(X, Y)/2$ holds for every partition $V(G) = X \cup Y$. Then, there exists a cycle $C \subseteq G$ for which the following two properties hold.
\begin{enumerate}
\item [\emph{(a)}] $|C| \in \{6,10\}$;

\item [\emph{(b)}] $|E(C) \setminus E(R)|$ is odd. 

\end{enumerate}
\end{lemma}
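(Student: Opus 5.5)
The plan is to argue by contradiction. Assume that every cycle of length $6$ or $10$ in $G$ has an even number of edges outside $R$. Since all cycles of $G$ are even, this is the same as saying that such cycles have an even number of edges in $R$. From this we will deduce that \emph{every} cycle of $G$ has an even number of $R$-edges, and then show that this contradicts $R \neq G$ together with $e_R(X,Y) \geq e_G(X,Y)/2$.

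Write $G = (U \cup W, E)$ for the bipartition and $\delta = \delta(G)$. The basic tool is the following consequence of $\alpha_{\textrm{BIP}}(G) \leq 2\delta - 6$: if $S \subseteq U$ and $T \subseteq W$ satisfy $|S| = |T| = \delta - 2$, then $E_G(S,T) \neq \emptyset$. Otherwise $S \cup T$ would be a balanced independent set of size $2\delta - 4 > \alpha_{\textrm{BIP}}(G)$.

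Let $u \in U$ and $w \in W$, and let $F$ be a set of at most two "forbidden" vertices. Apply the tool to $S = N(w) \setminus (F \cup \{u\})$ and $T = N(u) \setminus (F \cup \{w\})$. This produces a path $u,a,b,w$ of length $3$ avoiding $F$. The slack of $2$ in the sizes is what allows us to forbid a couple of vertices.

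\textbf{Step 1: a parity function on pairs.} I will use the path construction to define $p(u,w) \in \mathbb{F}_2$ as the parity of the number of $R$-edges on a $u$–$w$ path of length $3$, and show this is well defined.

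Two internally disjoint such paths form a $6$-cycle, which has even $R$-parity by assumption, so they have the same parity. Two paths that intersect internally can be compared through a third path that avoids their (at most two) interior vertices. The forbidden-set version of the construction provides exactly such a third path, so $p$ is well defined.

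\textbf{Step 2: edges and the triangle identity.} Next I will relate $p$ to single edges and to longer cycles. For an edge $uw$, concatenating $uw$ with a $3$-path from $w$ back to some $u'$, and then a further $3$-path, produces $6$- and $10$-cycles. These force additive identities such as
$$p(u,w) + p(u',w) + p(u',w') + p(u,w') = 0.$$
Using a $10$-cycle formed from two edges and two $3$-paths, I also expect to obtain $r(uw) = p(u,w) + \epsilon$ for a fixed constant $\epsilon$, where $r(e) = 1$ if and only if $e \in E(R)$.

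The four-term identity gives $p(u,w) = f(u) + g(w)$ for suitable functions $f$ and $g$. Hence $r(uw) = f(u) + g(w) + \epsilon$ for every edge $uw$.

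\textbf{Step 3: the contradiction.} Put $A = \{u \in U : f(u) = 0\} \cup \{w \in W : g(w) = 1 + \epsilon\}$ and $B = V(G) \setminus A$. By Step 2, an edge of $G$ lies in $R$ exactly when it crosses the partition $(A,B)$. Now take the partition
$$X = (A \cap U) \cup (B \cap W), \qquad Y = (B \cap U) \cup (A \cap W).$$
An edge crosses $(X,Y)$ exactly when it does \emph{not} cross $(A,B)$, i.e. exactly when it lies outside $R$. So $E_G(X,Y) = E(G) \setminus E(R)$, which is nonempty because $R \neq G$, while $e_R(X,Y) = 0$. This contradicts $e_R(X,Y) \geq e_G(X,Y)/2$.

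\textbf{Main obstacle.} I expect the hardest part to be Step 2: showing that the cycles produced are genuine cycles (vertex-disjoint where required) of length exactly $6$ or $10$, and that these cycles suffice to force the cut structure. In particular, the natural identity for $4$-cycles has to be obtained indirectly, as a sum of two $6$-cycles sharing a path of length $4$. Each time, I would first try to obtain the needed connecting paths from the $(\delta - 2)$-sized neighbourhood tool, removing the already used vertices. The hypothesis $2\delta - 6$ leaves room for about two removed vertices on each side, and I would organise every construction so that it never needs more than that.
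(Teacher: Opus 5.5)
Your Step 3 reduction is sound: it suffices to show that once all $6$- and $10$-cycles are $R$-even, every cycle is. There is a sign slip, though. With your $A$, an edge crosses $(A,B)$ exactly when it is \emph{not} in $R$, so it is $(A,B)$ itself, not $(X,Y)$, that violates $e_R\ge e_G/2$.

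The genuine gap is that Steps 1--2, which carry the whole content of the lemma, are only announced, and the constructions you do name do not work.
\begin{itemize}
\item A cycle made of two edges and two paths of length $3$ has length $1+3+1+3=8$, and the hypothesis says nothing about $8$-cycles.
\item The relation $r(uw)=p(u,w)+\epsilon$ is exactly the assertion that all $4$-cycles formed by an edge $uw$ and a $u$--$w$ path of length $3$ have the same $R$-parity.
\item The four-term identity concerns closed walks of length $12$ made of four such paths. Alternatively, it concerns parities of length-$4$ paths between same-side vertices, and two such paths form an $8$-cycle.
\end{itemize}
So you would still have to show, with all auxiliary paths internally disjoint, that these $4$-, $8$- and $12$-cycles are sums of $6$- and $10$-cycles. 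That is the lemma itself, postponed.

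The counting you rely on already fails in Step 1. Balanced independent sets have even size, so $\alpha_{\textrm{BIP}}(G)\le 2\delta-6$ only guarantees an edge between sets of size $\delta-2$ on each side. That means you may delete at most two vertices from a $\delta$-neighbourhood.

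Two distinct $u$--$w$ paths of length $3$ that meet internally look like $(u,a,b,w)$ and $(u,a,b',w)$, or the symmetric version. Their interiors therefore contain three vertices, not ``at most two'', and $b,b'\in N(w)$. A third path internally disjoint from both must take its $U$-vertex from $N(w)\setminus\{b,b',u\}$. This set has only $\delta-3$ elements when $uw\in E(G)$ and $\deg(w)=\delta$, which is exactly the adjacent case Step 2 needs. Comparing the two paths directly amounts to showing that the $4$-cycle $(a,b,w,b')$ is $R$-even, again the missing point. Naively finding the length-$4$ path needed to write a $4$-cycle as a sum of two $6$-cycles hits the same $\delta-3$ count when two opposite corners have equal neighbourhoods.

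The paper avoids any global bookkeeping with a minimal-counterexample argument:
\begin{enumerate}
\item Take a shortest even cycle $C'$ with an odd number of non-$R$ edges. It exists because $R$ is connected and $R\neq G$.
\item A chord would split $C'$ into two shorter even cycles whose sum is $C'$, so $C'$ is chordless.
\item Chordlessness gives at least $\delta-2$ neighbours of $v_1$ and of $v_6$ off $C'$. An edge between these two sets yields two shorter cycles summing to $C'$, so $|C'|\le 8$.
\item Lengths $4$ and $8$ are then converted to $6$ and $10$ by adding a short detour, again as one of two cycles summing to $C'$.
\end{enumerate}
Since only the two cycle-neighbours are ever removed from a neighbourhood, the hypothesis suffices exactly, and no parity function needs to be shown well defined.
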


\begin{proof}
It follows by Theorem~\ref{th::HamBipOAR} that $G$ is connected. It then follows by the premise of the lemma that $R$ is connected as well. Let $V(G) = A \cup B$ be the bipartition of $G$. Note that $R$ is bipartite as well and $V(R) = A \cup B$ is the unique bipartition of $R$. Since $R \neq G$ holds by the premise of the lemma, there exist vertices $x \in A$ and $y \in B$ such that $xy \in E(G) \setminus E(R)$. Since $R$ is connected, it admits a path $P$ whose endpoints are $x$ and $y$. Since $R$ is bipartite, the length of $P$ is odd. Hence, combined with the edge $xy$, this yields an even cycle which satisfies Property (b). 

Let $C' = (v_1, \ldots, v_{2r}, v_1)$ be an even cycle in $G$ of minimum length out of all even cycles satisfying Property (b); assume without loss of generality that $v_1 \in A$. We claim that $|C'| \leq 8$. Prior to proving this, we show that $C'$ has no chords. Indeed, suppose for a contradiction that there exist $1 \leq i < j \leq 2r$ such that $v_i v_j$ is a chord of $C'$; note that without loss of generality this implies that $v_i \in A$ and $v_j \in B$. It is then straightforward to verify that one of the cycles in $C' \cup \{v_i v_j\}$ which contain $v_i v_j$ is even and satisfies Property (b). In all possible cases, the obtained cycle is shorter than $C'$, contrary to its assumed minimality. 


Next, we show that $|C'| \leq 8$. Suppose for a contradiction that $r \geq 5$. Let $k = \alpha_{\textrm{BIP}}(G)$ and note that $\delta(G) \geq k/2 + 3$ holds by the premise of the lemma. Since $v_1 \in A$ by assumption, it follows that $v_6 \in B$. Since $C'$ has no chords, it  then follows that $\textrm{deg}_G(v_1, B \setminus V(C')) \geq k/2 + 1$ and that $\textrm{deg}_G(v_6, A \setminus V(C')) \geq k/2 + 1$. Hence, by the definition of $\alpha_{\textrm{BIP}}(G)$, there are vertices $x \in N_G(v_1, B \setminus V(C'))$ and $y \in N_G(v_6, A \setminus V(C'))$ such that $xy \in E(G)$. Similarly as above, it is straightforward to verify that one of the cycles in $C' \cup \{v_1 x, x y, y v_6\}$ which contain $v_1 x, x y$, and $y v_6$, is an even cycle satisfying Property (b). In all possible cases, if $|C'| \geq 10$, then the obtained cycle is shorter than $C'$, contrary to its assumed minimality.

Next, using $C'$, we construct a cycle $C$ which satisfies properties (a) and (b). If $|C'| = 6$, then set $C = C'$. We consider the remaining two possible lengths of $C'$ separately. 
\begin{enumerate}
\item [(1)] $|C'| = 4$. Let $X = N_G(v_1, V(G) \setminus V(C')) \subseteq B$ and note that 
$$
|X| \geq \textrm{deg}_G(v_1) - 2 \geq k/2 + 1.
$$
Similarly, let $Y = N_G(v_3, V(G) \setminus V(C')) \subseteq B$ and note that $Y \neq \emptyset$. Let $y \in Y$ be an arbitrary vertex and let $Z = N_G(y, V(G) \setminus V(C')) \subseteq A$. Note that $|Z| \geq \textrm{deg}_G(y) - |V(C') \cap A| \geq k/2 + 1$. It then follows by the definition of $\alpha_{\textrm{BIP}}(G)$ that there exist vertices $x \in X$ and $z \in Z$ such that $xz \in E(G)$. It is straightforward to verify that either $(v_1, v_2, v_3, y, z, x, v_1)$ or $(v_1, v_4, v_3, y, z, x, v_1)$ is a cycle satisfying properties (a) and (b).

\item [(2)] $|C'| = 8$. Let $X = N_G(v_1, V(G) \setminus V(C')) \subseteq B$ and let $Y = N_G(v_2, V(G) \setminus V(C')) \subseteq A$. Since $C'$ has no chords, it follows that $\min \{|X|, |Y|\} \geq \delta(G) - 2 \geq k/2 + 1$. It then follows by the definition of $\alpha_{\textrm{BIP}}(G)$ that there exist vertices $x \in X$ and $y \in Y$ such that $xy \in E(G)$. It is straightforward to verify that either $(v_1, x, y, v_2, v_3, \ldots, v_8, v_1)$ is a cycle satisfying properties (a) and (b), or $(v_1, x, y, v_2, v_1)$ is a cycle of length 4 which satisfies Property (b); in the latter case we may apply Case (1) to obtain the required cycle.
\end{enumerate}
\end{proof}




The following result takes care of Step (S2b). 

\begin{lemma} \label{lem::fewNeighboursBip}
Let $G$ be a balanced bipartite graph satisfying $\alpha_{\emph{BIP}}(G) \leq 2 \delta(G) - 20$, and let $C = (v_1, \ldots, v_{2r}, v_1)$ be a cycle in $G$ satisfying properties \emph{(a)} and \emph{(b)} from Lemma~\ref{lem::BipCycleLemma}. Then there exist vertex-disjoint paths $P_2, \ldots, P_r$ such that the following properties hold for every $2 \leq i \leq r$.
\begin{enumerate}
\item [\emph{(i)}] The endpoints of $P_i$ are $v_i$ and $v_{2r-i+2}$, and $V(P_i) \cap V(C) = \{v_i, v_{2r-i+2}\}$;

\item [\emph{(ii)}] The length of $P_i$ is 2 or 4;

\item [\emph{(iii)}] $\emph{deg}_G(v, V(P_i)) \leq 2$ holds for every $v \in V(G) \setminus (V(C) \cup V(P_2) \cup \ldots \cup V(P_r))$.
\end{enumerate}
\end{lemma}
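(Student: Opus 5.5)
We build $P_2, \ldots, P_r$ greedily, one at a time, mirroring the proof of Lemma~\ref{lem::fewNeighbours}. Let $V(G) = A \cup B$ be the bipartition, $k = \alpha_{\textrm{BIP}}(G)$, and note $\delta(G) \geq k/2 + 10$. Since $|C| \in \{6,10\}$, we have $r \leq 5$, so there are at most four paths to construct. For each $2 \leq i \leq r$, the vertices $v_i$ and $v_{2r-i+2}$ lie at even distance along $C$ when $|C| \in \{6,10\}$: their indices differ by $2r - 2i + 2$, which is even. Hence they lie on the same side of the bipartition. This explains why the paths must have even length, namely 2 or 4.

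Suppose $P_2, \ldots, P_{i-1}$ have been built satisfying (i)--(iii). Set
$$W_i = \left(V(C) \cup V(P_2) \cup \ldots \cup V(P_{i-1})\right) \setminus \{v_i, v_{2r-i+2}\},$$
so $|W_i| \leq 10 + 3 \cdot 3 = 19$. If $v_i$ and $v_{2r-i+2}$ have a common neighbour $w \notin W_i$, take $P_i = (v_i, w, v_{2r-i+2})$. Properties (i) and (ii) are immediate. For (iii), $w$ is on the side opposite any vertex adjacent to it, and $v_i, v_{2r-i+2}$ are on the same side. So any outside vertex $v$ has at most two neighbours on $P_i$: either the two endpoints (if $v$ is on the side of $w$) or just $w$.

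Otherwise, let $X = N_G(v_i) \setminus W_i$ and $X' = N_G(v_{2r-i+2}) \setminus W_i$. These lie on the same side, say $B$ (with $v_i \in A$), and are disjoint. Each has size at least $\delta(G) - |W_i \cap B| \geq \delta(G) - 10$, since $W_i \cap B$ has at most about half of the at most 19 vertices. Pick any $x \in X$ and $x' \in X'$ and consider $Y = N_G(x) \setminus (W_i \cup \{v_i, v_{2r-i+2}\}) \subseteq A$. This costs only about $|W_i \cap A| + 2$ vertices, so $|Y|$ is roughly $\delta - 11 \geq k/2 - 1$. That is not quite enough for a direct balanced-independence argument.

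The better approach is to apply the definition of $\alpha_{\textrm{BIP}}$ to $X$ together with a set in $A$. We want a path $v_i, x, y, x', v_{2r-i+2}$ with $y \in A$ a common neighbour of some $x \in X$ and some $x' \in X'$. Let $Y_0 = N_G(X') \cap A \setminus (W_i \cup \{v_i, v_{2r-i+2}\})$; it has size at least $\delta - 11 \geq k/2 - 1$, and it will be taken of size at least $k/2 + 1$ if the counting is arranged carefully. Suppose $e_G(X, Y_0) = 0$, with both sets of size at least $k/2 + 1$. Choosing subsets of size exactly $\lceil k/2 \rceil + 1$ then produces a balanced independent set of size greater than $k$, a contradiction. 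So some $x \in X$ and $y \in Y_0$ are adjacent, and $y$ has a neighbour $x' \in X'$. Then $P_i = (v_i, x, y, x', v_{2r-i+2})$ works, provided $x \neq x'$, which holds since $X \cap X' = \emptyset$.

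It remains to verify (iii) in this case. An outside vertex in $A$ can see only $x$ and $x'$ on $P_i$. An outside vertex in $B$ can see $v_i$, $y$, and $v_{2r-i+2}$, but it cannot be adjacent to both $v_i$ and $v_{2r-i+2}$, because no common neighbour exists outside $W_i$. So each outside vertex has at most two neighbours on $P_i$. One subtlety: (iii) must hold relative to the final outside set, but later paths only shrink that set, so the property is preserved.

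The main obstacle is the counting: we need $|X|, |Y_0| \geq k/2 + 1$ after deleting $W_i$ and the previously used vertices. This is exactly where the constant 20 in $\alpha_{\textrm{BIP}}(G) \leq 2\delta(G) - 20$ comes in. We will bound $|W_i \cap A|$ and $|W_i \cap B|$ separately: $C$ contributes at most 5 vertices per side, and each earlier path of length 4 contributes three interior vertices, split 2/1 between the sides. With $\delta \geq k/2 + 10$, this gives the required slack.
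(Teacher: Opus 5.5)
Your construction is the paper's: build $P_2,\dots,P_r$ greedily. Use a common neighbour outside the used set when one exists. Otherwise apply the definition of $\alpha_{\textrm{BIP}}(G)$ to $X=N_G(v_i)\setminus W_i$ and the $A$-neighbourhood of a vertex of $X'$, giving the path $(v_i,x,y,x',v_{2r-i+2})$. The paper fixes a single vertex of $X'$; your union $N_G(X')$ is harmless. Your bound $|X|\ge k/2+1$ via the parity-aware count $|W_i\cap B|\le 9$ is right, and so is your check of (iii).

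\textbf{The gap.} It lies in the step you call the main obstacle, $|Y_0|\ge k/2+1$, and the counting you propose does not give it. Take $|C|=10$, $i=5$, $v_5\in A$, and suppose $P_2,P_3,P_4$ all have length $4$. Then $v_1,v_3,v_5,v_7,v_9\in A$. $P_2$ and $P_4$ have endpoints in $B$, so each has two interior vertices in $A$; $P_3$ has one. That is $10$ vertices of $A$ to avoid, so bounding by $|(W_i\cup\{v_i,v_{2r-i+2}\})\cap A|$ gives only $|Y_0|\ge \delta(G)-10\ge k/2$. Since $k=\alpha_{\textrm{BIP}}(G)$ is always even, sets of size $k/2$ on each side give a balanced independent set of size exactly $k$, which is no contradiction. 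So, as stated, your argument falls short of the hypothesis $\alpha_{\textrm{BIP}}(G)\le 2\delta(G)-20$ by one.

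\textbf{The fix.} The saving must come from a fixed $x'\in X'$, not from the size of $W_i\cap A$. Either of two observations removes one vertex from the count:
\begin{itemize}
\item $x'v_i\notin E(G)$, because $x'\notin W_i$ and $X\cap X'=\emptyset$.
\item By the inductive hypothesis (iii), applied to $x'$ (which lies outside $V(C)\cup V(P_2)\cup\dots\cup V(P_{i-1})$), $x'$ has at most two neighbours on $P_3$. The $A$-vertices of $P_3$ are $v_3$, $v_9$ and its middle vertex, three in all.
\end{itemize}
Either one gives $\textrm{deg}_G(x', A\cap(W_i\cup\{v_i,v_{2r-i+2}\}))\le 9$, and both together give $\le 8$. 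Hence $|Y_0|\ge k/2+1$. This is exactly where the paper invokes (iii) to bound $\textrm{deg}_G(y,W_i)$. You need to state and use one of these observations for the proof to go through.
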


\begin{proof}
Let $k = \alpha_{\textrm{BIP}}(G)$ and note that $r \leq 5$ holds by Property (a) from Lemma~\ref{lem::BipCycleLemma}. We construct the required paths $P_2, \ldots, P_r$ one by one as follows. Assume that for some $2 \leq i \leq r$ we have already built $P_2, \ldots, P_{i-1}$, each satisfying properties (i), (ii), and (iii) (the latter meaning that $\textrm{deg}_G(v, V(P_j)) \leq 2$ holds for every $2 \leq j \leq i-1$ and every $v \in V(G) \setminus (V(C) \cup V(P_2) \cup \ldots \cup V(P_{i-1}))$), and now wish to construct $P_i$. Note that $v_i$ and $v_{2r-i+2}$ are in the same part of the bipartition of $G$; assume without loss of generality that both are in $A$. Let $W_i = \{v_1, \ldots, v_{2r}\} \cup V(P_2) \cup \ldots \cup V(P_{i-1})$. If there exists a vertex $w \in B \setminus W_i$ which is a common neighbour of $v_i$ and $v_{2r-i+2}$, then we set $P_i = (v_i, w, v_{2r-i+2})$; it is evident that $P_i$ satisfies properties (i), (ii), and (iii) in this case. 

Assume then that $X := N_G(v_i) \setminus W_i$ and $Y := N_G(v_{2r-i+2}) \setminus W_i$ are disjoint sets. Since $G$ is bipartite, it follows that $\textrm{deg}_G(v_i, V(C)) \leq r \leq 5$ and $\textrm{deg}_G(v_{2r-i+2}, V(C)) \leq r \leq 5$. Fix any $2 \leq j \leq i-1$. Since $G$ is bipartite, it follows by Property (ii) that 
$$
\max \{\textrm{deg}_G(v_i, V(P_j) \setminus V(C)), \textrm{deg}_G(v_{2r-i+2}, V(P_j) \setminus V(C))\} \leq 2
$$ 
if $j \equiv i \mod 2$, and that 
$$
\max \{\textrm{deg}_G(v_i, V(P_j) \setminus V(C)), \textrm{deg}_G(v_{2r-i+2}, V(P_j) \setminus V(C))\} \leq 1
$$ 
if $j \not\equiv i \mod 2$. Since $r \leq 5$, we conclude that $\max \{\textrm{deg}_G(v_i, W_i), \textrm{deg}_G(v_{2r-i+2}, W_i)\} \leq 9$. Since $\delta(G) \geq k/2 + 10$ holds by the premise of the lemma, it follows that $|X| \geq k/2 + 1$ and that $Y \neq \emptyset$. Let $y \in Y$ be an arbitrary vertex and let $Z = N_G(y, A \setminus W_i)$. Note that $X \cup \{y\} \subseteq B$ and $Z \subseteq A$. Moreover, $\textrm{deg}_G(y, W_i) \leq 8$ holds by Property (iii) and since $r \leq 5$. Since $\delta(G) \geq k/2 + 10$ holds by the premise of the lemma, it follows that $|Z| \geq k/2 + 1$. Hence, by the definition of $\alpha_{\textrm{BIP}}(G)$, there exist $x \in X$ and $z \in Z$ such that $xz \in E(G)$. Set $P_i = (v_i, x, z, y, v_{2r-i+2})$ and note that $P_i$ satisfies properties (i) and (ii). Since no vertex of $V(G) \setminus W_i$ is adjacent to both $v_i$ and $v_{2r-i+2}$, the path $P_i$ satisfies Property (iii) as well.     
\end{proof}

Having proved Theorem~\ref{th::HamBipOAR} and Lemmas~\ref{lem::BipCycleLemma} and~\ref{lem::fewNeighboursBip}, we are in good position to prove Theorem~\ref{th::BipCEcycleSpace}. 

\begin{proof} [Proof of Theorem~\ref{th::BipCEcycleSpace}]
Suppose for a contradiction that $\mathcal{C}_{2n}(G) \neq \mathcal{C}(G)$. We follow the recipe that was presented in Section~\ref{sec::recipe}. That is, we need to handle steps (S1), (S2), and (S3). Our assumption that $\mathcal{C}_{2n}(G) \neq \mathcal{C}(G)$ will then lead to the contradiction appearing in (S5).  

It follows by Theorem~\ref{th::HamBipOAR} that $G$ is Hamiltonian. Let $R$ be as in the premise of Lemma~\ref{lem::subgraphR}; this takes care of (S1). 

Next, we take care of (S2). Starting with (S2a), it follows by Lemma~\ref{lem::BipCycleLemma} that $G$ contains a cycle satisfying properties (a) and (b) from that lemma; let $C = (v_1, \ldots, v_{2r}, v_1)$ be such a cycle. As for (S2b), the required paths $P_2, \ldots, P_r$ are attained via Lemma~\ref{lem::fewNeighboursBip}. 

Finally, we take care of (S3). Let $W = V(P_2) \cup \ldots \cup V(P_r)$; it follows by Property (a) from Lemma~\ref{lem::BipCycleLemma} and by Property (ii) from Lemma~\ref{lem::fewNeighboursBip} that $|W| \leq 20$. Let $G' = G \setminus W$ and note that $G'$ is a balanced bipartite graph. Moreover, it follows by Property (a) from Lemma~\ref{lem::BipCycleLemma} that $v_1$ and $v_{r+1}$ belong to different parts of $G'$. Since $G$ is bipartite, it follows by Lemma~\ref{lem::BipCycleLemma}(a) and by Lemma~\ref{lem::fewNeighboursBip}(ii) that $\max \{\textrm{deg}_G(v_1, W), \textrm{deg}_G(v_{r+1}, W)\} \leq \max \{|A \cap W|, |B \cap W|\} \leq 10$. Moreover, for every vertex $v \in V(G) \setminus W$, it follows by Property (iii) from Lemma~\ref{lem::fewNeighboursBip} that $\textrm{deg}_G(v, W) \leq 8$. Hence, $\delta(G') \geq \delta(G) - 10$ and thus $\alpha_{\textrm{BIP}}(G') \leq \alpha_{\textrm{BIP}}(G) \leq 2 \delta(G) - 24 \leq 2 \delta(G') - 4$. It then follows by Theorem~\ref{th::HamBipOAR} that $G'$ is Hamilton-biconnected; in particular, there is a Hamilton path of $G'$ whose endpoints are $v_1$ and $v_{r+1}$.
\end{proof}

\end{document}